\numberwithin{equation}{section}
\theoremstyle{plain} 
\newtheorem{theorem}{Theorem}[section]
\newtheorem{lemma}[theorem]{Lemma}
\newtheorem{corollary}[theorem]{Corollary}
\newtheorem{proposition}[theorem]{Proposition}
\theoremstyle{definition} 
\newtheorem{definition}[theorem]{Definition}
\newtheorem{remark}[theorem]{Remark}
\def\N{\mathbb N}
\def\R{\mathbb R}
\def\F{\mathcal F}
\newcommand{\iO}{\int_\Omega}
\newcommand{\iB}{\int_{\Omega}}
\def\address#1#2{\begingroup
\noindent\parbox[t]{7.8cm}{%
\small{\scshape\ignorespaces#1}\par\vskip1ex
\noindent\small{\itshape E-mail address}%
\/: #2\par\vskip4ex}\hfill%
\endgroup}%
\title{\uppercase{A sufficient condition for absence of mass quantization in a chemotaxis system with local sensing}} 
\author{
\bigskip \\
\textsc{Yuri Soga} 
}
\date{\today} 
\begin{document}

\maketitle

\footnote{ 
2020 \textit{Mathematics Subject Classification}.
Primary 35B44; Secondary 35B45, 35B51, 35K59.
}
\footnote{ 
\textit{Key words and phrases}.
chemotaxis, blowup, Lyapunov functional, mass concentration phenomena, mass quantization
}

\begin{abstract}
We analyze blowup solutions in infinite time of the Neumann boundary value problem for the fully parabolic chemotaxis system with local sensing:
\begin{equation*}
\begin{cases}
 u_t = \Delta(e^{-v}u)\qquad &\mathrm{in}\ \Omega \times (0,\infty),\\
 v_t = \Delta v -v + u\qquad &\mathrm{in}\ \Omega \times (0,\infty),
\end{cases}
\end{equation*}
where $\Omega$ is a ball in two-dimensional space and with nonnegative radially symmetric initial data. In the case of the Keller--Segel system which has a similar mathematical structure with our system, it was shown that 
the solutions blow up in finite time if and only if $L\log L$ for the first component $u$ diverges in finite time.
 On the other hand, focusing on the variational structure induced by a signal-dependent motility function $e^{-v}$, we show that an unboundedness of $\int_\Omega e^v dx$ for the second component $v$ gives rise to blowup solutions in infinite time under the assumption of radial symmetry. Moreover we prove mass concentration phenomena at the origin. It is shown that the radially symmetric solutions of our system develop a singularity like a Dirac delta function in infinite time. 
Here we investigate the weight of this singularity.
Consequently it is shown that mass quantization may not occur; that is, the weight of the singularity can exceed $8\pi$ under the assumption of a uniform-in-time lower bound for the Lyapunov functional. This type of behavior cannot be observed in the Keller--Segel system.
\end{abstract}
\newpage

\section{Introduction}

In this paper, we consider global-in-time solutions of the $0$-Neumann boundary value problem for the chemotaxis system:
\begin{equation}\label{p}\tag{P}
\begin{cases}
u_t = \Delta(e^{-v}u)\qquad &\mathrm{in}\ \Omega \times (0,\infty),\\
v_t = \Delta v -v + u\qquad &\mathrm{in}\ \Omega \times (0,\infty),\\
\partial_\nu u = \partial_\nu v = 0 \qquad &\mathrm{on}\ \partial \Omega \times (0,\infty),\\
(u(\cdot,0),v(\cdot, 0)) = (u_0(\cdot),v_0(\cdot))\qquad &\mathrm{in}\ \Omega,
\end{cases}
\end{equation}
where $\Omega$ is a ball in two-dimensional space, $\partial_\nu$ denotes the derivative with respect to the outward normal of $\partial\Omega$, and the initial data $(u_0, v_0)$ satisfies
\begin{equation}\label{c}
\begin{cases}
u_0 \in W^{1,\infty}(\Omega),\quad u_0 \ge 0\ (u_0 \not\equiv 0)\quad &\mathrm{in}\ \Omega,\\
v_0 \in W^{1,\infty}(\Omega),\quad v_0 > 0 &\mathrm{in}\ \Omega.
\end{cases}
\end{equation}

This model is motivated from biological background (see \cite{FTL2012, LFL2011}). From the viewpoint of mathematics, the system \eqref{p} shares similar mathematical structures as the Keller–
Segel system (see \cite{KS1970, BBTW2015}):

\begin{equation}\label{KS}\tag{KS}
\begin{cases}
 u_t = \Delta u - \nabla \cdot (u\nabla v)\qquad &\mathrm{in}\ \Omega \times (0,\infty),\\
 v_t = \Delta v -v + u\qquad &\mathrm{in}\ \Omega \times (0,\infty).
\end{cases}
\end{equation}
Indeed, both systems admit the same stationary problem and have a Lyapunov functional given by
\begin{align*}
\F (t) := \iO (u\log u - uv) dx + \dfrac{1}{2}\|\nabla v\|_{L^2(\Omega)}^2 + \dfrac{1}{2}\|v\|_{L^2(\Omega)}^2,
\end{align*}
which was established by \cite{B1998, NSY1997} for the Keller--Segel system and by \cite{HZ2020, FJ2021} for our system.
In the case of our system, the Lyapunov functional satisfies the energy identity
\begin{align}
\dfrac{d}{dt}\F (t) + \iO ue^{-v}|\nabla (\log u - v)|^2 dx + \|v_t\|_{L^2(\Omega)}^2 = 0.\label{lya}
\end{align}
This Lyapunov functional is deeply related to a long-time behavior of solutions. The combination of the
Lyapunov functional and the Trudinger–Moser inequality (see \cite{CY1988, MJ1970}) implies the mass critical phenomena, which seems to be one of the remarkable points of the study for large-time behaviors of solutions to the Keller--Segel system. Indeed, in the two-dimensional and radially symmetric setting,
it is known that solutions of the Keller--Segel system exhibit different long-time behavior depending on whether the size of mass is below or above the threshold value $8\pi$, which is called ``$8\pi$-problem.'' To be precise, the following results are known.
\begin{itemize}
\item if $\|u_0\|_{L^1(\Omega)} < 8\pi$, the corresponding classical solution of \eqref{p} exists globally and remains bounded (see \cite{NSY1997, B1998, M2013}).
\item There exists some initial data with $\|u_0\|_{L^1(\Omega)} > 8\pi$ such that the corresponding solution blows up in finite time (see \cite{M2020_1, M2020_2, HV1996, HV1997, N1995}).
\end{itemize}
On the other hand, although our system \eqref{p} and the Keller--Segel system have same mathematical features such as the Lyapunov functional and the stationary problem, the mass critical phenomena different from that of the Keller--Segel system were exhibited by \cite{HZ2020, FJ2020, FJ2021, FJ2022}:
\begin{itemize}
\item The classical solution of \eqref{p} always exists globally in time regardless of the size
of the initial mass $\|u_0\|_{L^1(\Omega)}$ and moreover if $\|u_0\|_{L^1(\Omega)} < 4\pi$ (or $8\pi$ in the radially symmetric case), the corresponding classical solution of \eqref{p} is uniformly bounded in time.
\item There exists some initial data with $\|u_0\|_{L^1(\Omega)} \in (4\pi, \infty) \setminus 4\pi \N$ (or $\|u_0\|_{L^1(\Omega)} > 8\pi$ in the radially symmetric case) such that the corresponding solution of \eqref{p} blows up
in infinite time.
\end{itemize}
In addition, even in the case of chemotaxis systems containing various motility functions, many studies are carried out (see \cite{AY2019, JL2021, FJ2021_2, TW2017, FS2022_1, FS2022_2}).
In this paper, we focus on infinite time blowup solutions of \eqref{p}, which differ from the long-time behavior of solutions to the Keller--Segel system. 

In the detailed analysis of blowup solutions,  the solutions of \eqref{KS} that blow up in finite time develop a singularity like a Dirac delta function, a phenomenon known as ``chemotactic collapse.'' This behavior was initially demonstrated in the pioneering works of Herrero--Vel\'{a}zquez \cite{HV1996, HV1997}, followed by Nagai--Senba--Suzuki \cite{NSS2000} for the fully parabolic Keller--Segel system, and Senba--Suzuki \cite{SS2001} for the parabolic-elliptic Keller–Segel system. 
Especially in \cite{NSS2000, SS2001}, it was proved that the finite time blowup solution $u$ satisfies
\begin{align*}
u(\cdot, t) \stackrel{*}{\rightharpoonup} m \delta(0) + f\ \mathrm{in}\ \mathscr{M}(\overline{\Omega})\  \mathrm{as}\ t \to T_\mathrm{max} < \infty,
\end{align*}
where the weight $m$ is the constant such that $m \ge 8\pi$ and $\mathscr{M}(\overline{\Omega})$ is denoted by the dual space of $C(\overline{\Omega})$. It is an open problem whether mass quantization ($m = 8\pi$) occurs or not ($m > 8\pi$) in the Keller--Segel system. On the other hand, in the parabolic-elliptic Keller--Segel system, it was shown by Senba--Suzuki \cite{SS2001_2, SS2002} that mass quantization of collapse occurs if the solution blows up in infinite time. The key method in \cite{SS2001_2, SS2002} is a blowup criterion derived from the second moment law. Subsequently, Suzuki \cite{ST2005} showed mass quantization of collapse for finite time blowup solutions by using the method of backward self-similar transformation. However, these methods cannot be directly applied to the fully parabolic Keller--Segel system. Therefore, new techniques are required to analyze mass concentration phenomena in the Keller--Segel system, yet the problem remains open.

In the author's previous work \cite{YS2025}, an attempt was made to overcome the problem of mass quantization for infinite time blowup solutions to the chemotaxis system with indirect signal production:
\begin{equation}\notag
\begin{cases}
 u_t = \Delta u - \nabla \cdot (u \nabla w)\qquad &\mathrm{in}\ \Omega \times (0,\infty),\\
 v_t = -v + u\qquad &\mathrm{in}\ \Omega \times (0,\infty),\\
 w_t = \Delta w -w + v \qquad &\mathrm{in}\ \Omega \times (0,\infty).
\end{cases}
\end{equation}
This chemotaxis system exhibits the same mass critical phenomena as \eqref{p} (see \cite{L2019}). In \cite{YS2025}, motivated by \cite{NSS2000, SS2001}, it is shown that $L\log L$-boundedness for the first component $u$ determines whether solutions remain bounded in time or blow up in infinite time. The fundamental idea of the proof is global-in-time energy estimates, in contrast to local-in-time  estimates in \cite{NSS2000, SS2001}.
Then, the similar mass concentration phenomena as \cite{NSS2000, SS2001} is proved. Moreover it is shown that mass quantization may not occur under the assumption that the Lyapunov functional has a lower bound along the solution curve. More precisely, under this assumption, it holds that there exists a sequence $\{t_k\}$ of time with $t_k \to \infty$ as $k \to \infty$ such that
\begin{align*}
u(\cdot, t_k) \stackrel{*}{\rightharpoonup} \|u_0\|_{L^1(\Omega)} \delta(0)\ \mathrm{in}\ \mathscr{M}(\overline{\Omega})\ \mathrm{as}\ k \to \infty.
\end{align*}
Hence the weight of a delta function can exceed $8\pi$.
In contrast, Mizoguchi \cite{M2020_1} showed that in the Keller--Segel system, the Lyapunov functional is uniformly bounded from below in time and a concentration of the mass in infinite time cannot happen with $\|u_0\|_{L^1(\Omega)} > 8\pi$. Therefore this type of behavior can never be observed in the Keller--Segel system.

Inspired by the previous work \cite{YS2025}, we initiated this study by investigating whether a similar argument could be applied to our system \eqref{p} as well. However, it was impossible to apply the argument in \cite{YS2025} directly due to the variational structure of our system.  Indeed, in the case of the Keller--Segel system (see \cite{NSS2000, SS2001, SS2001_2}) (and the chemotaxis system with indirect signal production (see \cite{YS2025})), the solutions blow up in finite time (resp. in infinite time) if and only if $L\log L$ for the density of cells $u$ blows up in finite time (resp. in infinite time). The key ingredient for the proof above is the variant of the Sobolev inequality which were established in \cite{BHN1994, NSY1997}. On the other hand, the signal-dependent motility function $e^{-v}$ contained in our system \eqref{p} prevents this inequality from being applicable (see Section \ref{sec3}). 
This reason and the method of the proof in \cite{FJ2021} led us to predict that $L\log L$ for the component $u$ is not essential factor leading to grow-up solution of \eqref{p}.  
Consequently, focusing on the second component $v$ in light of the variational structure of \eqref{p},
we succeed in characterizing grow-up solutions distinct from \cite{YS2025} and proving mass concentration phenomena.
Let us introduce the following definition.

\begin{definition}
Let $(u,v)$ be a classical solution to \eqref{p} in $\Omega \times (0, \infty)$. 
\begin{enumerate}
\item[\rm{(i)}] We shall call $(u,v)$ a grow-up solution if it holds that
\begin{align*}
\limsup_{t \to \infty}\|u(t)\|_{L^\infty(\Omega)} = \infty.
\end{align*}
\item[\rm{(ii)}] We shall call $x_0 \in \overline{\Omega}$ a grow-up point if there exist $\{t_k\} \subset (0,\infty)$ and $\{x_k\} \subset \overline{\Omega}$ such that
\begin{align*}
t_k \to \infty,\quad x_k \to x_0,\quad u(x_k, t_k) \to \infty\ \mathrm{as}\ k \to \infty.
\end{align*}
Moreover, we shall denote by $\mathcal{G}$ the set of grow-up points.
\end{enumerate}
\end{definition}
We are interested in studying the behavior of grow-up solutions to \eqref{p} near grow-up points. Therefore, the existence of grow-up points is essential when considering grow-up solutions (see Proposition \ref{prop:groe}).

\noindent
\textbf{Main results.}
Our main results are stated as follows. In this paper, we denote $B_r(x_0)$ by $\{x \in \R^2; |x-x_0| < r\}$ and write $B_r$ instead of $B_r(0)$.
\begin{theorem}\label{th:1}
Let $\Omega = B_R \subset\R^2$ with some $R > 0$ and $a > 1$. Assume that $(u_0,v_0)$ is a couple of radially symmetric functions satisfying \eqref{c}. Then the corresponding classical solution $(u,v)$ is a grow-up solution if and only if the second component $v$ satisfies 
\begin{align*}
\limsup_{t \to \infty}\iO e^{av(x,t)} dx = \infty.
\end{align*}
\end{theorem}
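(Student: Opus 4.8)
The plan is to prove the two implications separately, in each case by contraposition, so that the analytic burden reduces to showing that a uniform-in-time bound on $\iO e^{av}\,dx$ propagates to a uniform-in-time bound on $\|u\|_{L^\infty(\Omega)}$.

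\emph{The easy implication}: if $(u,v)$ is not a grow-up solution, then $\sup_{t>0}\iO e^{av}\,dx<\infty$. Since $(u,v)$ is a global classical solution, $\limsup_{t\to\infty}\|u(t)\|_{L^\infty(\Omega)}<\infty$ already yields $M:=\sup_{t>0}\|u(t)\|_{L^\infty(\Omega)}<\infty$; writing the second equation through the Neumann heat semigroup as $v(t)=e^{t(\Delta-1)}v_0+\int_0^t e^{(t-s)(\Delta-1)}u(s)\,ds$ and using $\|e^{\tau(\Delta-1)}f\|_{L^\infty(\Omega)}\le e^{-\tau}\|f\|_{L^\infty(\Omega)}$ gives $\|v(t)\|_{L^\infty(\Omega)}\le\|v_0\|_{L^\infty(\Omega)}+M$, whence $\iO e^{av}\,dx\le|\Omega|\,e^{a(\|v_0\|_{L^\infty(\Omega)}+M)}$. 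Contrapositively, $\limsup_{t\to\infty}\iO e^{av}\,dx=\infty$ forces $(u,v)$ to be a grow-up solution.

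For the reverse implication I would assume $C_a:=\sup_{t>0}\iO e^{av}\,dx<\infty$ and argue in three steps. \textbf{Step 1 (consequences of the Lyapunov functional).} Integrating the first equation yields mass conservation $\|u(t)\|_{L^1(\Omega)}=\|u_0\|_{L^1(\Omega)}$; combining $\F(t)\le\F(0)$ with Young's inequality $u\cdot(av)\le u\log u+e^{av-1}$ gives $\iO uv\,dx\le\frac1a\iO u\log u\,dx+\frac{1}{ea}\iO e^{av}\,dx$, and inserting this into $\F(t)\le\F(0)$ the hypothesis $a>1$ makes the coefficient $1-\frac1a$ of the entropy positive, so that $\sup_{t>0}\iO u\log u\,dx<\infty$ and $\sup_{t>0}\|v(t)\|_{H^1(\Omega)}^{2}<\infty$; in particular $\F$ is bounded below along the trajectory, so \eqref{lya} gives $v_t\in L^{2}\bigl((0,\infty);L^{2}(\Omega)\bigr)$. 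Feeding the $H^1$-bound on $v$ into the Trudinger--Moser inequality $\log\iO e^{qv}\,dx\le\frac{q^{2}}{8\pi}\|\nabla v\|_{L^2(\Omega)}^{2}+\frac{q}{|\Omega|}\iO v\,dx+C_\Omega$ then upgrades it to $\sup_{t>0}\iO e^{qv}\,dx<\infty$ for every $q\in[1,\infty)$.

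\textbf{Step 2 (a uniform $L^{p}$-bound, the crux).} I would exploit the variational structure by setting $W:=e^{-v}u$, so that the first equation becomes $u_t=\Delta W$, and testing it against $W^{p-1}$ with $p>1$. Using $e^{-(p-1)v}u^{p}=e^{v}W^{p}$ this produces the identity
\[
\frac1p\,\frac{d}{dt}\iO e^{v}W^{p}\,dx+\frac{4(p-1)}{p^{2}}\iO\bigl|\nabla\bigl(W^{p/2}\bigr)\bigr|^{2}\,dx=-\frac{p-1}{p}\iO e^{v}W^{p}\,v_t\,dx,
\]
whose decisive feature is that the dissipation carries \emph{no} $e^{-v}$-weight. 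Substituting $v_t=\Delta v-v+u$ on the right, integrating by parts and using $u=e^{v}W$, the cubic term $-(p-1)\iO e^{2v}W^{p+1}\,dx$ has a good sign and is discarded, and after absorbing one copy of $\iO|\nabla W^{p/2}|^{2}$ the remaining terms are bounded by $C_p\iO e^{2v}W^{p}\bigl(1+|\nabla v|^{2}\bigr)\,dx$, together with a $v_t$-residue $\le C_p\|v_t\|_{L^2(\Omega)}\,\|e^{v}W^{p}\|_{L^2(\Omega)}$. These are controlled by Hölder's inequality, the bounds $\sup_{t}\iO e^{qv}\,dx<\infty$ of Step 1, a Gagliardo--Nirenberg interpolation of $W^{p/2}$ against $\iO|\nabla W^{p/2}|^{2}+\iO W^{p}$ (with $\iO W^{p}\le\iO e^{v}W^{p}$), the radial representation of $v_r$ in terms of the cumulative mass (which yields $|\nabla v(x,t)|\lesssim|x|^{-1}+\|v_t(t)\|_{L^2(\Omega)}$), and $v_t\in L^2\bigl((0,\infty);L^2(\Omega)\bigr)$. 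Closing this differential inequality and bootstrapping over $p$ gives $W\in L^{\infty}\bigl((0,\infty);L^{q}(\Omega)\bigr)$ for every $q<\infty$; since $u=e^{v}W$, Hölder together with $\sup_{t}\iO e^{av}\,dx=C_a$ yields $u\in L^{\infty}\bigl((0,\infty);L^{r}(\Omega)\bigr)$ for every $r\in[1,a)$, in particular for some $r>1$. \textbf{Step 3 (bootstrap to $L^{\infty}$).} With $u\in L^{\infty}_{t}L^{r}_{x}$ for some $r>1$, the smoothing estimate $\|e^{\tau(\Delta-1)}f\|_{L^\infty(\Omega)}\le C(1+\tau^{-1/r})e^{-\tau}\|f\|_{L^{r}(\Omega)}$ in the Duhamel formula for $v$ gives $v\in L^\infty\bigl((0,\infty)\times\Omega\bigr)$; then $e^{-v}$ is bounded above and below, the first equation $u_t=\nabla\cdot\bigl(e^{-v}\nabla u-ue^{-v}\nabla v\bigr)$ is uniformly parabolic, and a standard $L^{p}$-bootstrap (now self-improving) followed by Moser iteration gives $\sup_{t>0}\|u(t)\|_{L^\infty(\Omega)}<\infty$; contrapositively, every grow-up solution satisfies $\limsup_{t\to\infty}\iO e^{av}\,dx=\infty$.

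\emph{The main obstacle} is Step 2. The motility $e^{-v}$ degenerates precisely where $u$ could grow up, so the parabolic dissipation is weakest exactly where it is needed, and the naive test function $u^{p-1}$ (which produces the weighted dissipation $\iO e^{-v}|\nabla u^{p/2}|^{2}$) does not close; testing with $W^{p-1}$ transfers the weight to the energy $\iO e^{v}W^{p}$, but then closing the estimate \emph{uniformly in time} — rather than on finite intervals, as in the local analyses for the Keller--Segel system — rests on the global-in-time a priori information carried by the Lyapunov functional, namely the $H^1$-bound on $v$ (hence all exponential moments of $v$) and $v_t\in L^2_tL^2_x$. The genuinely delicate point is the coupling through $|\nabla v|^{2}$ and the singular weight $|x|^{-2}$ that it produces near the origin: handling it is where the radial symmetry is used in an essential way — through the explicit one-dimensional structure of the second equation and the radial Trudinger--Moser inequality — and verifying that every such term can be absorbed into a small multiple of $\iO|\nabla W^{p/2}|^{2}$ plus a time-integrable remainder, with a bootstrap over $p$ that does not degenerate, is the technical heart of the argument.
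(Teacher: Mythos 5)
Your ``easy'' implication and Step~1 are correct, and Step~1 in particular is a nice observation not appearing in the paper: combining the monotonicity $\F(t)\le\F(0)$ with Young's inequality in the form $uv\le \frac1a\,(u\log u-u)+\frac1a\,e^{av}$ and the hypothesis $a>1$ does give, once $\sup_t\iO e^{av}\,dx<\infty$ is assumed, uniform-in-time upper bounds on $\iO u\log u\,dx$ and $\|v\|_{H^1(\Omega)}$, and hence boundedness of $\F$ from below and $v_t\in L^2_tL^2_x$. The paper does not invoke these global consequences at all and instead works entirely with the localized energy built from the auxiliary function $w=(I-\Delta)^{-1}u$. Your approach is therefore genuinely different, which makes it important to check whether it can actually be closed.

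I do not believe Step~2 can be closed by the tools you cite, and the gap is not merely technical. After testing $u_t=\Delta W$ against $W^{p-1}$ and inserting $v_t=\Delta v-v+u$, the worst term is (once you integrate $-\iO e^v W^p\Delta v\,dx$ by parts and apply Young to the cross term $2\iO e^v W^{p/2}\,\nabla W^{p/2}\cdot\nabla v\,dx$)
\begin{align*}
\iO e^{2v}\,W^p\,|\nabla v|^2\,dx .
\end{align*}
Your radial bound $|\nabla v(x,t)|\lesssim |x|^{-1}+\|v_t(t)\|_{L^2(\Omega)}$ isolates the singular part $\iO e^{2v}W^p\,|x|^{-2}\,dx$. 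In two dimensions the critical Hardy inequality \emph{fails}: there is no constant $C$ with $\int_{B_R}\frac{f^2}{|x|^2}\,dx\le C\int_{B_R}|\nabla f|^2\,dx$, so this cannot be absorbed into the dissipation $\iO|\nabla W^{p/2}|^2\,dx$ no matter how small a multiple you take. The only surviving variant carries a logarithmic correction $(\log\frac{eR}{|x|})^{-2}$ in the weight, i.e.\ it requires the extra weight to be \emph{small} near the origin; here the extra weight is $e^{2v}$, which by the $H^1$-bound on $v$ (radial lemma) is $\gtrsim \exp(c\sqrt{\log(1/|x|)})$ and in fact, by Proposition~\ref{pro:322}, can grow like $\exp(c|x|^{-\kappa})$ -- it is \emph{large} near the origin, i.e.\ supercritical, not subcritical. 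H\"older against exponential moments of $v$ does not help either: any splitting $\iO e^{2v}W^p|x|^{-2}\,dx\le \|e^{2v}\|_{L^q}\,\|W^p|x|^{-2}\|_{L^{q'}}$ with $q'\ge1$ requires $\int |x|^{-2q'}\,dx<\infty$, impossible in $\R^2$. So the ``technical heart'' you flag is in fact an obstruction, not a lacuna to be filled by care.

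The paper circumvents exactly this issue by never placing a weighted gradient of $v$ under the integral sign. Proposition~\ref{prop:3.11} tests the first equation against $w\varphi$ with $w=(I-\Delta)^{-1}u$, so that the elliptic identity $\Delta w=w-u$ eliminates $\nabla v$ from the computation entirely, and it propagates the $H^1$-type energy $\iO(|\nabla w|^2+w^2)\varphi\,dx$ rather than $\iO e^v W^p\,dx$. The conversion from that energy back to information on $v$ is done through the pointwise comparison $v\le\frac{1}{1-e^{-v_*}}(w+K)$ of Lemma~\ref{lem:vupper} together with the localized Trudinger--Moser inequality, not through any gradient control on $v$. This auxiliary function and the comparison principle are the paper's essential new ingredients; your proposal does not use them, and I do not see a substitute for the role they play.
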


\begin{remark}\label{remark:1}
As noted, we cannot apply the argument in \cite{NSS2000, SS2001} for the Keller--Segel system and \cite{YS2025} for the chemotaxis system with indirect signal production due to the influence of the variational structure of our system. Theorem \ref{th:1} is shown through the development of a new inequality to replace the inequality in \cite{BHN1994} and the utilization of an auxiliary function which presents the pointwise upper bound estimate for $v$. This auxiliary function was introduced by \cite{FJ2020, FJ2021}.
This theorem sheds new light on the dynamics of solutions in chemotaxis systems.
\end{remark}

Motivated by the author's previous work \cite{YS2025} and thanks to Theorem \ref{th:1}, we investigate mass concentration phenomena.

\begin{corollary}\label{th:2}
Let $\Omega = B_R \subset \R^2$ with some $R > 0$. Assume that $(u_0,v_0)$ is a couple of radially symmetric functions satisfying \eqref{c}. If $(u,v)$ is a grow-up solution of \eqref{p}, then there exist a sequence $\{t_k\}$ of time with $t_k \to \infty$ as $k \to \infty$ and $m \ge 8\pi$, together with a nonnegative function $f \in L^1(\Omega) \cap C(\overline{\Omega}\setminus \{0\})$ such that
\begin{align*}
u(\cdot, t_k) \stackrel{*}{\rightharpoonup} m \delta(0) + f\ \mathrm{in}\ \mathscr{M}(\overline{\Omega})\  \mathrm{as}\ k \to \infty.
\end{align*}
In other words, it holds that for any $\xi \in C(\overline{\Omega})$,
\begin{align*}
\iB u(x,t_k)\xi(x) dx \rightarrow m\xi(0) + \iB f(x)\xi(x) dx\ \mathrm{as}\ k \to \infty.
\end{align*}
\end{corollary}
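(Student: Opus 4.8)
The plan is to combine the grow-up characterization of Theorem \ref{th:1} with a compactness argument in the space of measures, together with the structural consequences of the Lyapunov functional \eqref{lya}. First I would use the fact that $\|u(\cdot,t)\|_{L^1(\Omega)}$ is conserved in time (integrate the first equation of \eqref{p} and use the no-flux condition), so that the family $\{u(\cdot,t)\}_{t>0}$ is bounded in $L^1(\Omega) \hookrightarrow \mathscr{M}(\overline{\Omega})$. Since $(u,v)$ is a grow-up solution, Theorem \ref{th:1} gives $\limsup_{t\to\infty}\int_\Omega e^{av}\,dx = \infty$ for every $a>1$; pick a time sequence $t_k\to\infty$ along which $\int_\Omega e^{av(\cdot,t_k)}\,dx\to\infty$. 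By weak-$*$ sequential compactness of bounded sets in $\mathscr{M}(\overline{\Omega})$ (Banach--Alaoglu, $\mathscr{M}(\overline{\Omega})=C(\overline{\Omega})^*$ being separable), after passing to a subsequence we may assume $u(\cdot,t_k)\stackrel{*}{\rightharpoonup}\mu$ for some nonnegative Radon measure $\mu$ on $\overline{\Omega}$ with $\mu(\overline{\Omega})\le\|u_0\|_{L^1(\Omega)}$.

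Next I would show that the singular part of $\mu$ is concentrated at the origin and quantify its weight. The key is to rule out concentration at points $x_0\neq 0$ and on $\partial\Omega$, and to show $\mu$ has a density away from the origin. For this I would invoke a local version of the $\epsilon$-regularity philosophy adapted to \eqref{p}: away from the origin, the radial structure forces $v$ (hence the motility $e^{-v}$) to stay bounded above and below on annuli $\{r_0/2<|x|<2r_0\}$ using the auxiliary pointwise bound for $v$ from \cite{FJ2020,FJ2021} already exploited in Theorem \ref{th:1}, and then parabolic regularity for the first equation — written in divergence form $u_t=\nabla\cdot(e^{-v}\nabla u - u e^{-v}\nabla v)$ — yields uniform local bounds on $u$ there. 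Hence $\mu\restriction_{\overline{\Omega}\setminus\{0\}}$ is absolutely continuous with a density $f\in L^1(\Omega)\cap C(\overline{\Omega}\setminus\{0\})$, and writing $m:=\mu(\{0\})$ we get $u(\cdot,t_k)\stackrel{*}{\rightharpoonup} m\delta(0)+f$. To see that $m>0$ (so that a genuine collapse occurs) and that $m\ge 8\pi$, I would argue by contradiction: if $m<8\pi$, then on small balls $B_\rho$ the mass $\int_{B_\rho}u(\cdot,t_k)\,dx$ is eventually below $8\pi$, which by the sub-critical boundedness result for \eqref{p} (the local analogue of $\|u_0\|_{L^1}<8\pi\Rightarrow$ uniform boundedness, combined with the Lyapunov/Trudinger--Moser machinery and the auxiliary $v$-bound) would give a uniform bound for $\int_{B_\rho}e^{av}\,dx$ near the origin as well, contradicting $\int_\Omega e^{av(\cdot,t_k)}\,dx\to\infty$ once boundedness away from the origin is accounted for. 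This forces $m\ge 8\pi$.

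Finally, the displayed weak-$*$ convergence against arbitrary $\xi\in C(\overline{\Omega})$ is just the definition of $u(\cdot,t_k)\stackrel{*}{\rightharpoonup} m\delta(0)+f$ in $\mathscr{M}(\overline{\Omega})$, so no further work is needed there beyond unwinding notation.

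The main obstacle I anticipate is the $\epsilon$-regularity / threshold step, i.e.\ proving the quantitative statement ``small local mass $\Rightarrow$ local boundedness'' for \eqref{p} and tying it to control of $\int e^{av}$ near the origin. Unlike the Keller--Segel case, the variant Sobolev inequality of \cite{BHN1994,NSY1997} is not directly available here because of the motility factor $e^{-v}$ (as the authors emphasise in Remark \ref{remark:1}), so this step must instead be routed through the new inequality and the auxiliary function constructed for Theorem \ref{th:1}. Making that localization precise — in particular ensuring the pointwise upper bound on $v$ degrades controllably as one approaches the origin, and that it is compatible with extracting the $8\pi$ threshold rather than merely some positive constant — is where the real work lies; the measure-theoretic extraction and the conservation of mass are routine by comparison.
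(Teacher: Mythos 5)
Your overall scaffolding is correct and matches the paper: mass conservation gives weak-$*$ compactness of $\{u(\cdot,t)\}$ in $\mathscr{M}(\overline{\Omega})$; Proposition~\ref{pro:rad0} confines concentration to $\{0\}$; the pointwise upper bound for $v$ (Proposition~\ref{pro:322}) plus parabolic regularity for the divergence-form first equation gives uniform $C^{2+\alpha,1+\alpha/2}$ bounds on annuli away from the origin (this is exactly Lemma~\ref{lem:H}), so the limit measure decomposes as $m\delta(0)+f$ with $f\in L^1\cap C(\overline{\Omega}\setminus\{0\})$; and the final ``in other words'' clause is just the definition of weak-$*$ convergence.

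The genuine gap is in the $m\ge 8\pi$ step, and you flag it yourself but then propose a route that does not work for this system. Your contradiction argument rests on a ``local analogue of $\|u_0\|_{L^1}<8\pi\Rightarrow$ uniform boundedness,'' i.e.\ an $\varepsilon$-regularity statement triggered by small local $L^1$ mass of $u$. But the paper's $\varepsilon$-regularity lemma (Lemma~\ref{lem:3.5}) is triggered by smallness of $\int_{B_{2r}}u e^{v}\,dx$, not of $\int_{B_{2r}} u\,dx$: the motility $e^{-v}$ destroys the good gradient term in the Keller--Segel energy argument (this is exactly the point emphasised after \eqref{BHNine}), so smallness of local $L^1$ mass alone gives no control here. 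Consequently ``$m<8\pi\Rightarrow$ local $L^1$ mass eventually $<8\pi\Rightarrow$ local boundedness'' is not available. What the paper actually supplies, and what you are missing, is a \emph{localized} Lyapunov functional
\begin{align*}
\mathcal{F}_\varphi (t) = \iO (u\log u - uv)\varphi\, dx + \dfrac{1}{2}\iO |\nabla v|^2 \varphi\, dx + \iO v^2 \varphi\, dx
\end{align*}
together with the differential inequality $\frac{d}{dt}\F_\varphi+\F_\varphi+\mathcal{D}_\varphi\le C$ and the uniform bound $\sup_{t>0}\F_\varphi(t)<\infty$ (established using the $C^{2+\alpha}$ control on annuli from Lemma~\ref{lem:H} to absorb the cut-off remainder). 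It is this localized energy bound, combined with the modified Trudinger--Moser inequality of Nagai--Senba--Suzuki (\cite{NSS2000}, Lemma~5.3), that produces the $8\pi$ threshold as in \cite{YS2025}; if $m<8\pi$ one closes the Trudinger--Moser estimate against $\F_\varphi$ to get a time-uniform bound on $\iO e^{av}\varphi\,dx$, contradicting Proposition~\ref{prop:3.11}. Your proposal gestures at ``the Lyapunov/Trudinger--Moser machinery'' without identifying that a localized version of the Lyapunov functional must first be constructed and shown bounded, and without addressing how its dissipation and remainder terms are controlled; that is precisely where the proof is, and as written your argument would not compile into one.
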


\begin{remark}
In our system, the assumption of radial symmetry for initial data ensures that the grow-up point is the only origin. However, it is not trivial. For example, we may consider the blowup solutions in finite time of the Dirichlet boundary value problem for the semilinear heat equation:
\begin{equation}\label{heat}
\begin{cases}
 u_t = \Delta u + |u|^{p-1}u &\mathrm{in}\ \Omega \times [0,T),\\
 u= 0 &\mathrm{on}\ \partial\Omega \times [0,T),
\end{cases}
\end{equation}
where $\Omega = B_R \subset \R^2$ for sufficiently large $R > 0$ and $p > 1$. If we consider initial data vanishing near $0$ but large near $|x| = R$, then the solution $u$ of \eqref{heat} does not blow up at the origin (see \cite{GK1989}). The reader is referred to \cite{MF1990} for 
Schr\"{o}dinger equations. For this reason, we need to show $\mathcal{G} = \{0\}$ under the assumption of radial symmetry (see Proposition \ref{pro:rad0}).
\end{remark}

It is an open problem whether the mass quantization ($m = 8\pi$) occurs or not ($m > 8\pi$) in Corollary \ref{th:2}. Here focusing on the boundedness of the Lyapunov functional, we will present the following theorem for this challenge, which cannot be observed in the Keller--Segel system.

\begin{corollary}\label{th:3}
Let $\Omega = B_R \subset \R^2$ with some $R > 0$. Assume that $(u_0,v_0)$ be a couple of radially symmetric functions satisfying \eqref{c} and $\|u_0\|_{L^1(\Omega)} > 8\pi$. If a grow-up solution $(u,v)$ of \eqref{p} satisfies 
\begin{align}
\inf_{t \ge 0} \F (t) > -\infty,\label{eq:i3}
\end{align}
then there exists a sequence $\{t_k\}$ of time with $t_k \to \infty$ as $k \to \infty$ such that
\begin{align*}
u(\cdot, t_k) \stackrel{*}{\rightharpoonup} \|u_0\|_{L^1(\Omega)} \delta(0)\ \mathrm{in}\ \mathscr{M}(\overline{\Omega})\ \mathrm{as}\ k \to \infty.
\end{align*}
\end{corollary}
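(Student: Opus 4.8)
The plan is to combine Corollary \ref{th:2} with the Lyapunov-functional lower bound \eqref{eq:i3} to upgrade the decomposition $u(\cdot,t_k)\stackrel{*}{\rightharpoonup}m\delta(0)+f$ into a \emph{pure} Dirac mass carrying the total mass. By Corollary \ref{th:2} there is already a sequence $t_k\to\infty$ along which the weak-$*$ limit is $m\delta(0)+f$ with $m\ge 8\pi$ and $f\in L^1(\Omega)\cap C(\overline\Omega\setminus\{0\})$. Since total mass is conserved for \eqref{p} (integrate the first equation and use the Neumann condition), testing against $\xi\equiv 1$ gives $m+\|f\|_{L^1(\Omega)}=\|u_0\|_{L^1(\Omega)}$. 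Thus it suffices to prove $f\equiv 0$, equivalently $m=\|u_0\|_{L^1(\Omega)}$; in particular this forces $\|u_0\|_{L^1(\Omega)}>8\pi$, consistent with the hypothesis.

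To force $f\equiv 0$ I would argue by contradiction, assuming $\|f\|_{L^1(\Omega)}>0$, i.e.\ a strictly positive fraction of the mass stays away from the origin in the limit. The key quantitative input is the energy identity \eqref{lya}: integrating it in time and using \eqref{eq:i3} shows
\begin{align*}
\int_0^\infty\!\!\Big(\iO ue^{-v}|\nabla(\log u-v)|^2\,dx+\|v_t\|_{L^2(\Omega)}^2\Big)\,dt=\F(0)-\lim_{t\to\infty}\F(t)\le \F(0)-\inf_{t\ge0}\F(t)<\infty,
\end{align*}
so the dissipation is integrable; hence one can choose the sequence $\{t_k\}$ so that additionally $\|v_t(\cdot,t_k)\|_{L^2(\Omega)}\to0$ and $\iO u(\cdot,t_k)e^{-v(\cdot,t_k)}|\nabla(\log u(\cdot,t_k)-v(\cdot,t_k))|^2\,dx\to0$. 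Along such a sequence the pair $(u(\cdot,t_k),v(\cdot,t_k))$ is, in a suitable sense, asymptotically a stationary state: the second equation degenerates to $-\Delta v+v=u$ in the limit and the first to $e^{-v}u\equiv\mathrm{const}$ on each connected component where the limiting density is smooth, i.e.\ $\log u=v+c$ there. Away from the origin, elliptic regularity for $v$ (driven by the bounded-in-$L^1$ source $u(\cdot,t_k)$) plus the pointwise upper bound for $v$ from the auxiliary function of \cite{FJ2020,FJ2021} gives local $C^{2}$ bounds for $v$ on compact subsets of $\overline\Omega\setminus\{0\}$, and then the relation $u=e^{v+c}$ yields that the regular part $f$ is actually a smooth positive steady state of the limiting Liouville-type problem on $\Omega\setminus\{0\}$ with the Neumann condition on $\partial\Omega$ and the prescribed singular mass $m\delta(0)$.

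The contradiction then comes from a Pohozaev/second-moment type identity applied to this limiting profile, exactly in the spirit of the $8\pi$-threshold and the residual-vanishing arguments in \cite{SS2001,NSS2000}: a nontrivial regular part $f>0$ of a solution to $-\Delta V+V=m\delta(0)+f$, $f=e^{V+c}$, on the ball is incompatible with the energy having a finite limit, because the Trudinger--Moser deficit associated to the would-be two-bubble (or bubble-plus-regular-part) configuration forces $\F(t_k)\to-\infty$, contradicting \eqref{eq:i3}. Concretely, one computes the ``energy at infinity'' of the decomposition $m\delta(0)+f$ and shows that unless $f\equiv0$ this energy is $-\infty$; since $\F(t_k)$ converges to this value along the chosen sequence, we get a contradiction with the assumed lower bound, whence $f\equiv0$ and $m=\|u_0\|_{L^1(\Omega)}$.

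The main obstacle I anticipate is making the passage to the limit rigorous: justifying that along the dissipation-decaying sequence $\{t_k\}$ the density $u(\cdot,t_k)$ really does converge \emph{locally smoothly} to a genuine singular steady state away from the origin, rather than merely weak-$*$, and that no mass is lost to the boundary $\partial B_R$ or smeared out. This requires upgrading the weak convergence using the $v$-equation's smoothing together with the pointwise bound on $v$, controlling $\iO ue^{-v}|\nabla(\log u-v)|^2\,dx\to0$ into genuine compactness for $\log u-v$ on compact subsets of $\overline\Omega\setminus\{0\}$ (where $u$ stays bounded, by definition of $\mathcal G=\{0\}$ from Proposition \ref{pro:rad0}), and then quantifying the energy of the limiting configuration precisely enough to see the $-\infty$ when $f\not\equiv0$. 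Once these compactness and energy-quantization steps are in place, the conservation of mass and the contradiction argument close the proof.
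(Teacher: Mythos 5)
Your high-level plan --- reduce via mass conservation to showing the regular part $f$ vanishes, integrate the global energy identity \eqref{lya} with \eqref{eq:i3} to obtain integrable dissipation, pick $t_k$ along which the dissipation vanishes, and pass (using the uniform $C^{2+\alpha}$ bounds of Lemma \ref{lem:H} away from the origin) to a singular stationary profile --- is the same strategy the paper uses; the paper simply invokes the analogous machinery from Subsection 3.6 of \cite{YS2025} together with Lemma \ref{lem:H} and \eqref{eq:i3}, exploiting the fact that \eqref{p} and the Keller--Segel system share the same stationary problem.

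The genuine gap is in your final contradiction step. You argue that a nontrivial $f$ produces a ``Trudinger--Moser deficit'' that ``forces $\F(t_k)\to-\infty$,'' contradicting \eqref{eq:i3}. This cannot be right as stated: by \eqref{lya} the functional $\F$ is nonincreasing, so under \eqref{eq:i3} the full limit $\lim_{t\to\infty}\F(t)$ \emph{exists and is finite}, and no subsequence can send $\F$ to $-\infty$. The contradiction must therefore come from the structure of the limiting stationary profile, not from divergence of the energy. A direct substitute that closes the argument: along $t_k$ the vanishing localized dissipation yields $\nabla(\log f-V)\equiv 0$ on $\{f>0\}$, hence $f=e^{V+c_\omega}$ on each connected component $\omega\subset\overline\Omega\setminus\{0\}$; since $e^{V+c_\omega}>0$ and $f\in C(\overline\Omega\setminus\{0\})$ by Corollary \ref{th:2}, the set $\{f>0\}$ can have no interior free boundary, so it is either empty or all of $\overline\Omega\setminus\{0\}$. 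In the latter case $V=(I-\Delta)^{-1}\bigl(m\delta(0)+f\bigr)$ carries the singularity $V(x)\ge\frac{m}{2\pi}\log\frac{1}{|x|}-C$ near the origin, which forces $f=e^{V+c}\gtrsim |x|^{-m/(2\pi)}$; this is not locally integrable once $m\ge 4\pi$, and here $m\ge 8\pi$ by Corollary \ref{th:2}, contradicting $f\in L^1(\Omega)$. Hence $f\equiv 0$, and mass conservation gives $m=\|u_0\|_{L^1(\Omega)}$. With this replacement your proposal is sound and follows essentially the same route as the paper.
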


\begin{remark}
Corollary \ref{th:2} and Corollary \ref{th:3} establish the same type of results as in \cite{YS2025}, but for a different equation. Hence these results represent a new development for our chemotaxis system \eqref{p}. Moreover, the method of proof employed in this paper is completely different from \cite{YS2025}.
As stated in Remark \ref{remark:1}, we clarify the characterization of grow-up solutions to \eqref{p} in  Theorem \ref{th:1} based on a novel idea. Since this theorem leads to Corollary \ref{th:2} and subsequently to Corollary \ref{th:3}, we highlight that our results are not a direct consequence of the argument in the author's previous work \cite{YS2025}.
\end{remark}

\begin{remark}
Whether the assumption \eqref{eq:i3} holds for a grow-up solution of \eqref{p} is an open problem, yet we emphasize that Corollary \ref{th:3} implies new difference about large-time behaviors of solutions to \eqref{p} and the Keller--Segel system. 
\begin{itemize}
\item If there exists a grow-up solution of \eqref{p} satisfying the assumption \eqref{eq:i3} in Corollary \ref{th:3} or the grow-up solution of \eqref{p} always satisfies the assumption, then the mass quantization of collapse does not occur and the weight of a delta function completely equals to the mass $\|u_0\|_{L^1(\Omega)}$. These results are different from that of the Keller--Segel system. 
\item If the assumption \eqref{eq:i3} in Corollary \ref{th:3} is false, then we can conclude that every radially symmetric grow-up solution of \eqref{p} satisfies $\inf_{t > 0}\F (t) = -\infty$. In contrast, in the Keller--Segel system, it was shown by Mizoguchi \cite{M2020_1} that the radially symmetric global-in-time solution of the Keller--Segel system always satisfies a uniform-in-time lower bound for the Lyapunov functional. Therefore, This result is unlike for the Keller–Segel system.
\end{itemize}
Consequently, although our system and the Keller--Segel system share common features with respect to the stationary problem and the Lyapunov functional, we can conclude that there are differences in the global behavior of solutions.
\end{remark}

\noindent
\textbf{Plan of this paper.} This paper is organized as follows. In Section \ref{sec:pre}, we prepare
some fundamental properties of the solution to \eqref{p} and state some useful lemmas. In Section \ref{sec3}, we state the difference in the variational structures of \eqref{p} and the Keller--Segel system and the associated challenges in an a priori estimate. Then in Subsection \ref{subsec:1}, we show the key functional inequality to proceed a priori estimates in a localized area. After that, we enter the stage of proving theorem \ref{th:1} and Corollaries. In Subsection \ref{subsec:2}, we firstly prove that the grow-up point is the only origin and secondly prove the concentration lemma based on the original idea, and finally obtain Theorem \ref{th:1}. In Subsection \ref{subsec:3}, we show the mass concentration phenomena following the similar approach as \cite{YS2025}.

\section{Preliminaries}\label{sec:pre}

In this section, we collect key ingredients for the proof of our results. We begin by introducing the local existence and uniqueness of classical solutions. More strongly we emphasize that the unique classical solution of \eqref{p} exists globally in time without dependence on the initial mass, which is shown by Fujie--Jiang \cite{FJ2021}.
The proof for local existence and uniqueness of a non-negative classical solution to \eqref{p} can be completed by  an application of the Amann theory \cite{AH1990, AH1993} as noted in \cite{FJ2021}, or an employment of the standard parabolic regularity theory and the Schauder fixed point theorem as outlined in \cite{AY2019}. The positivity follows from the strong maximum principle for classical solutions. As for the proof of the global-in-time solution, we may refer the reader to \cite{FJ2021}. The properties \eqref{mass:1} and \eqref{mass:2} result from the $0$-Neumann boundary condition.

\begin{proposition}\label{loc:1}
Let $\Omega$ be a bounded domain in $\R^2$ with smooth boundary and let $(u_0,v_0)$ be as in \eqref{c}. Then there exists a unique positive classical solution of \eqref{p}
\begin{align*}
(u,v) \in (C(\overline{\Omega} \times [0,\infty)) \cap C^{2,1}(\overline{\Omega} \times (0,\infty)))^2.
\end{align*}
Moreover the following mass conservation law holds 
\begin{align}
\iO u(x,t) dx = \iO u_0(x) dx\ \mathrm{for\ all}\ t \in (0,\infty)\label{mass:1}
\end{align}
and it holds that
\begin{align}
\int_\Omega v(x,t) dx \leq \max\{\|u_0\|_{L^1(\Omega)}, \|v_0\|_{L^1(\Omega)}\}\ \mathrm{for\ all}\ t \in (0,\infty).\label{mass:2}
\end{align}
\end{proposition}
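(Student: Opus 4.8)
The plan is to treat \eqref{p} as a triangular quasilinear parabolic system and argue in four steps: local well-posedness, positivity, globality, and the two integral identities. For the first step I would rewrite the $u$-equation in divergence form, $u_t=\nabla\cdot\big(e^{-v}\nabla u-u\,e^{-v}\nabla v\big)$, which for a fixed smooth, bounded, strictly positive $v$ is a uniformly parabolic \emph{linear} equation for $u$ on a short time interval, since there $e^{-v}$ is bounded above and bounded below away from $0$. This invites a fixed-point scheme: on a small ball in $C^{1+\alpha,(1+\alpha)/2}(\overline{\Omega}\times[0,\tau])$ around $v_0$, send $\tilde v$ to the solution $u$ of the linear Neumann problem with motility $e^{-\tilde v}$ and datum $u_0\in W^{1,\infty}(\Omega)$ via parabolic Schauder theory, then let $v$ solve $v_t=\Delta v-v+u$ with datum $v_0$ and zero Neumann data; for $\tau$ small the composite map $\tilde v\mapsto v$ is a compact self-map of the ball, so Schauder's fixed-point theorem produces a solution, while a Gronwall estimate on the difference of two solutions gives uniqueness, and parabolic bootstrapping upgrades $(u,v)$ to $C^{2,1}(\overline{\Omega}\times(0,\infty))$. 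Alternatively, as indicated in \cite{FJ2021, AY2019}, one applies Amann's theory \cite{AH1990, AH1993} to the quasilinear system directly after checking normal ellipticity and the triangular structure.

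For positivity, $u_0\ge0$, $u_0\not\equiv0$ and parabolicity give $u\ge0$, and the strong maximum principle upgrades this to $u>0$ in $\overline{\Omega}\times(0,\infty)$; likewise $v_t-\Delta v+v=u\ge0$ together with $v_0>0$ yields $v>0$, which in particular keeps $e^{-v}$ finite and the first equation genuinely parabolic on bounded time intervals. The globality claim---that the local solution extends to all of $(0,\infty)$ \emph{irrespective of} $\|u_0\|_{L^1(\Omega)}$---is the deep ingredient, and I would not reprove it: the only possible obstruction is degeneration of the motility through $v\to\infty$, so that $e^{-v}\to0$, and ruling this out is precisely the content of Fujie--Jiang \cite{FJ2021}, who combine the energy identity \eqref{lya} with $L^1$ information and elliptic/parabolic estimates to obtain the pointwise upper bound on $v$ that closes the bootstrap. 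I would simply cite this.

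The two identities are then elementary. Integrating the first equation over $\Omega$ and using the divergence theorem, $\frac{d}{dt}\iO u\,dx=\int_{\partial\Omega}\partial_\nu(e^{-v}u)\,dS$, and on $\partial\Omega$ one has $\partial_\nu(e^{-v}u)=e^{-v}\big(\partial_\nu u-u\,\partial_\nu v\big)=0$ by the boundary conditions in \eqref{p}, so $\iO u\,dx\equiv\iO u_0\,dx$, which is \eqref{mass:1}. Integrating the second equation and using $\int_{\partial\Omega}\partial_\nu v\,dS=0$ together with \eqref{mass:1}, the quantity $y(t):=\iO v(x,t)\,dx$ satisfies $y'(t)=-y(t)+\|u_0\|_{L^1(\Omega)}$ with $y(0)=\|v_0\|_{L^1(\Omega)}$, hence $y(t)=(1-e^{-t})\|u_0\|_{L^1(\Omega)}+e^{-t}\|v_0\|_{L^1(\Omega)}$; being a convex combination of the two masses, $y(t)\le\max\{\|u_0\|_{L^1(\Omega)},\|v_0\|_{L^1(\Omega)}\}$ for every $t\ge0$, which is \eqref{mass:2}.

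Among these steps the main obstacle is the global-in-time extension, because that is where the signal-dependent motility $e^{-v}$ really matters; the local theory, positivity, and the two identities are routine parabolic theory and integration by parts, and for the global part I would rely on \cite{FJ2021}.
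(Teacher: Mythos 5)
Your proposal is correct and matches the paper's (uncited) argument: the paper itself simply refers to Amann's theory or a Schauder fixed-point scheme for local existence and uniqueness, to the strong maximum principle for positivity, to Fujie--Jiang for globality, and notes that \eqref{mass:1} and \eqref{mass:2} follow from the Neumann boundary conditions, exactly as you spell out. The ODE $y'(t)=-y(t)+\|u_0\|_{L^1(\Omega)}$ and its explicit solution as a convex combination of $\|u_0\|_{L^1(\Omega)}$ and $\|v_0\|_{L^1(\Omega)}$ is a clean way to obtain \eqref{mass:2}.
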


We obtain a strictly positive uniform-in-time lower bound for the second component $v$ of \eqref{p} under the assumption that $v_0$ is a strictly positive function in $\overline{\Omega}$. 

\begin{lemma}[{\cite[Lemma 3.1]{FS2018}}]\label{lem:below}
Assume that the initial data $(u_0,v_0)$ satisfies \eqref{c}. If $(u,v)$ is the classical solution of \eqref{p} in $\Omega \times (0, \infty)$,  there exists $v_* > 0$ such that for all $t \ge 0$,
\begin{align*}
\inf_{x \in \Omega}v(x,t) \ge v_* > 0.
\end{align*}
\end{lemma}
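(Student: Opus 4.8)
The plan is to combine Duhamel's formula for the second equation with a uniform-in-time lower bound for the Neumann heat kernel away from $\tau=0$, using the mass conservation law \eqref{mass:1} to prevent the source term $u$ from becoming negligible. This is essentially the argument of \cite[Lemma 3.1]{FS2018}; I sketch the steps below.

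\emph{Step 1 (small times).} By Proposition \ref{loc:1} the solution $(u,v)$ is positive and $v\in C(\overline{\Omega}\times[0,\infty))$, while $v(\cdot,0)=v_0>0$ on the compact set $\overline{\Omega}$ by \eqref{c}. Hence, for any fixed $T_0>0$, the quantity $c_0:=\min_{\overline{\Omega}\times[0,T_0]}v$ is attained and strictly positive. It therefore suffices to produce a positive lower bound valid for $t\ge T_0$, and I will take $T_0=2$.

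\emph{Step 2 (large times via Duhamel).} Let $(e^{\tau\Delta})_{\tau\ge0}$ denote the Neumann heat semigroup on $\Omega$ with kernel $G(x,y,\tau)>0$, which satisfies $\iO G(x,y,\tau)\,dy=1$ and, because $\Omega$ is a bounded smooth domain, admits a uniform lower bound $G(x,y,\tau)\ge c_1>0$ for all $x,y\in\overline{\Omega}$ and all $\tau\ge1$ (this follows from positivity and continuity of $G$ on $\overline{\Omega}\times\overline{\Omega}\times[1,\infty)$ together with $G(x,y,\tau)\to 1/|\Omega|$ as $\tau\to\infty$). Writing the variation-of-constants formula for $v_t=\Delta v-v+u$,
\begin{align*}
v(\cdot,t)=e^{-t}e^{t\Delta}v_0+\int_0^t e^{-(t-s)}e^{(t-s)\Delta}u(\cdot,s)\,ds,
\end{align*}
I discard the (nonnegative) first term and, for $t\ge2$, restrict the integral to $s\in[0,t-1]$, so that $t-s\ge1$. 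Using the kernel bound, the nonnegativity of $u$, and \eqref{mass:1} with $M:=\|u_0\|_{L^1(\Omega)}>0$,
\begin{align*}
v(x,t)&\ge\int_0^{t-1}e^{-(t-s)}\iO G(x,y,t-s)\,u(y,s)\,dy\,ds\\
&\ge c_1M\int_0^{t-1}e^{-(t-s)}\,ds=c_1M\,(e^{-1}-e^{-t})\ge c_1M\,(e^{-1}-e^{-2})
\end{align*}
for every $x\in\Omega$ and $t\ge2$. Setting $v_*:=\min\{c_0,\ c_1M(e^{-1}-e^{-2})\}>0$ then yields the assertion for all $t\ge0$.

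\emph{Main obstacle.} The only delicate point is the uniform lower bound on $G$ for $\tau\ge1$: a naive comparison that simply drops the source term $u$ gives only $v(x,t)\ge e^{-t}\min_{\overline{\Omega}}v_0$, which degenerates as $t\to\infty$. The remedy, carried out above, is to retain the source term and exploit that its spatial mass is the fixed positive constant $M$, so that convolution against a kernel bounded below on $\{\tau\ge1\}$ produces a time-independent floor. (Alternatively one may note that $\tfrac{d}{dt}\iO v\,dx=M-\iO v\,dx$ keeps $\iO v\,dx$ bounded below away from $0$, and combine this with parabolic smoothing; but the kernel argument is the most economical.)
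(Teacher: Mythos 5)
The paper does not prove this lemma at all: it is simply imported from \cite[Lemma~3.1]{FS2018}, so there is no ``paper's own proof'' here to compare against. Your argument --- variation of constants for $v_t=\Delta v-v+u$, dropping the nonnegative homogeneous piece, restricting the Duhamel integral to $t-s\ge1$, invoking a uniform positive lower bound for the Neumann heat kernel on a connected smooth bounded domain at times $\tau\ge1$, and feeding in the conserved mass $\iO u=M>0$ --- is the standard proof of this fact and is, to my knowledge, essentially the argument used in the cited reference. It is correct.

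One small remark, not a flaw in your reasoning but worth being aware of: your Step~1 uses $c_0=\min_{\overline\Omega\times[0,T_0]}v>0$, which requires $v_0$ to be bounded away from zero on the closed set $\overline\Omega$, whereas condition \eqref{c} literally states $v_0>0$ ``in $\Omega$.'' Since $v_0\in W^{1,\infty}(\Omega)\hookrightarrow C(\overline\Omega)$, the two coincide unless $v_0$ vanishes on $\partial\Omega$, in which case the asserted uniform lower bound already fails at $t=0$; so the paper's hypothesis must be read as $\min_{\overline\Omega}v_0>0$, which is exactly what your proof (and any proof) uses.
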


The next proposition is a fairly straightforward of semigroup properties.

\begin{proposition}
Suppose that $(u, v)$ be a solution to \eqref{p} in $\Omega \times (0, \infty)$ with \eqref{c}.
Then it holds that:
\begin{enumerate}
\item[\rm{(a)}] Let $p \in (1,\infty)$, then there exists $C_1 > 0$ such that for all $t \in (0,\infty)$,
\begin{align}\label{semi:1}
\|v(t)\|_{L^p(\Omega)} \leq C_1(\|u_0\|_{L^1(\Omega)} + \|v_0\|_{L^p(\Omega)}).
\end{align}
\item[\rm{(b)}] Let $p \in (1,2)$, then there exists $C_2 > 0$ such that for all $t \in (0,\infty)$,
\begin{align}\label{semi:2}
\|\nabla v(t)\|_{L^p(\Omega)} \leq C_2(\|u_0\|_{L^1(\Omega)} + \|\nabla v_0\|_{L^p(\Omega)}).
\end{align}
\end{enumerate}
\end{proposition}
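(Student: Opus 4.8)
The plan is to represent $v$ by the variation-of-constants formula for the second equation of \eqref{p} and then read off both bounds from the standard $L^p$--$L^q$ smoothing estimates for the Neumann heat semigroup. Writing $(e^{t\Delta})_{t\ge0}$ for the Neumann heat semigroup on $\Omega$, the $v$-equation gives, for $t>0$,
\[
v(t) = e^{-t}e^{t\Delta}v_0 + \int_0^t e^{-(t-s)}e^{(t-s)\Delta}u(s)\,ds .
\]
I would use the following classical facts (recall $n=2$): $e^{t\Delta}$ is a contraction on $L^p(\Omega)$ for every $p\in[1,\infty]$; for $p\in(1,\infty)$ there is $C>0$ with $\|e^{t\Delta}w\|_{L^p(\Omega)}\le C\bigl(1+t^{-(1-1/p)}\bigr)\|w\|_{L^1(\Omega)}$ and $\|\nabla e^{t\Delta}w\|_{L^p(\Omega)}\le C\bigl(1+t^{-\frac12-(1-1/p)}\bigr)\|w\|_{L^1(\Omega)}$; and there are $C,\lambda_1>0$ with $\|\nabla e^{t\Delta}w\|_{L^p(\Omega)}\le Ce^{-\lambda_1 t}\|\nabla w\|_{L^p(\Omega)}$ for all $w\in W^{1,p}(\Omega)$. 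The second essential ingredient is the mass conservation law \eqref{mass:1} from Proposition \ref{loc:1}, which ensures $\|u(s)\|_{L^1(\Omega)}=\|u_0\|_{L^1(\Omega)}$ for all $s>0$; note also that $v_0\in W^{1,\infty}(\Omega)\subset W^{1,p}(\Omega)$ since $\Omega$ is bounded.

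For part (a), the first term is bounded by $e^{-t}\|v_0\|_{L^p(\Omega)}\le\|v_0\|_{L^p(\Omega)}$, while for the second term the smoothing estimate together with \eqref{mass:1} gives
\[
\Bigl\|\int_0^t e^{-(t-s)}e^{(t-s)\Delta}u(s)\,ds\Bigr\|_{L^p(\Omega)}\le C\|u_0\|_{L^1(\Omega)}\int_0^\infty e^{-\sigma}\bigl(1+\sigma^{-(1-1/p)}\bigr)\,d\sigma ,
\]
and the last integral is finite because $1-1/p<1$ for $p\in(1,\infty)$; this yields \eqref{semi:1}. For part (b), I would differentiate the representation formula, bound the first term by $e^{-t}\|\nabla e^{t\Delta}v_0\|_{L^p(\Omega)}\le Ce^{-(1+\lambda_1)t}\|\nabla v_0\|_{L^p(\Omega)}\le C\|\nabla v_0\|_{L^p(\Omega)}$ using the gradient estimate above, and bound the second term by
\[
\int_0^t e^{-(t-s)}\|\nabla e^{(t-s)\Delta}u(s)\|_{L^p(\Omega)}\,ds\le C\|u_0\|_{L^1(\Omega)}\int_0^\infty e^{-\sigma}\bigl(1+\sigma^{-(\frac32-1/p)}\bigr)\,d\sigma ,
\]
where the time-singularity exponent is $\frac12+(1-1/p)=\frac32-1/p$, which is $<1$ precisely because $p<2$; hence the integral converges and \eqref{semi:2} follows.

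The argument is genuinely routine, so I do not expect a serious obstacle. The two points deserving attention are: (i) for the initial-data term in part (b) one must invoke the gradient estimate $\|\nabla e^{t\Delta}w\|_{L^p(\Omega)}\le Ce^{-\lambda_1 t}\|\nabla w\|_{L^p(\Omega)}$ rather than the plain smoothing estimate, so that the right-hand side of \eqref{semi:2} involves $\|\nabla v_0\|_{L^p(\Omega)}$ and not $\|v_0\|_{L^p(\Omega)}$; and (ii) the hypothesis $p\in(1,2)$ in part (b) is exactly the condition under which the weight $\sigma^{-(3/2-1/p)}$ is integrable near $\sigma=0$. In both parts it is the exponential factor $e^{-(t-s)}$ produced by the absorption term $-v$ in \eqref{p} that upgrades the merely locally integrable time weights to bounds that are uniform in $t\in(0,\infty)$.
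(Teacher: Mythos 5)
Your proposal is correct and follows exactly the route the paper has in mind: the paper states that the proposition ``is a fairly straightforward [consequence] of semigroup properties'' and omits the computation, and your Duhamel representation for the $v$-equation combined with the $L^1\to L^p$ and gradient smoothing estimates for the Neumann heat semigroup and the mass conservation law \eqref{mass:1} supplies precisely the argument being alluded to. Your two remarks --- that the initial-data term in part (b) must be handled via the $W^{1,p}$-preserving gradient estimate so that \eqref{semi:2} involves $\|\nabla v_0\|_{L^p(\Omega)}$, and that $p<2$ is exactly what makes the exponent $\tfrac32-\tfrac1p$ strictly less than $1$ --- correctly identify the only places where care is needed.
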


In addition to the above information, we get a pointwise estimate for $v$ in the case $(u,v)$ is a radially symmetric solution of \eqref{p}.

\begin{proposition}[{\cite[Lemma 3.2]{WM2013}}]\label{pro:322}
Let $(u, v)$ be a radially symmetric solution to \eqref{p} in $\Omega \times (0, \infty)$ with \eqref{c}. Then, for $p \in (1,2)$, there exists a positive constant $C$ depending on $p$ such that for all $(x,t) \in \Omega \times (0,\infty)$,
\begin{align*}
v(x,t) \leq C(p) (\|u_0\|_{L^1(\Omega)} + \|v_0\|_{W^{1,p}(\Omega)})|x|^{-\frac{2-p}{p}}.
\end{align*}
By Setting
\begin{align*}
|||v(t)|||_{\kappa} := \sup_{x \in \Omega}||x|^\kappa v(x,t)|\ \mathrm{for}\ t \in (0,\infty),
\end{align*}
where $\kappa = \frac{2-p}{p} \in (0,1)$, it holds that there exists $C > 0$ independent of $t$ such that
\begin{align*}
\sup_{t > 0} |||v(t)|||_{\kappa} \leq C.
\end{align*}

\end{proposition}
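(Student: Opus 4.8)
\noindent\emph{Proof proposal.}\ The plan is to deduce the pointwise bound from a one–dimensional Morrey-type inequality for radial functions, feeding in the time-uniform integrability of $v$ and $\nabla v$ already recorded in \eqref{mass:2} and \eqref{semi:2}; the only genuine subtlety will be in organizing the non-singular (near-boundary) contribution.

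First I would fix $t>0$, set $r=|x|$, and write $w(r):=v(x,t)$. Since $(u,v)$ is a classical solution, $w\in C^2([0,R])$ with $w'(0)=0$, and $|\nabla v(x,t)|=|w'(r)|$ by radial symmetry. To control the non-singular part I would extract a ``good radius'': from \eqref{mass:2},
\begin{align*}
\pi R\int_{R/2}^{R}w(s)\,ds\le 2\pi\int_{0}^{R}w(s)\,s\,ds=\|v(\cdot,t)\|_{L^1(\Omega)},
\end{align*}
so the mean value theorem for integrals yields some $r_0=r_0(t)\in[R/2,R]$ with $w(r_0)\le \tfrac{2}{\pi R^2}\|v(\cdot,t)\|_{L^1(\Omega)}$.

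Next I would combine the fundamental theorem of calculus with H\"older's inequality: for $0<r\le r_0$,
\begin{align*}
w(r)\le w(r_0)+\int_{r}^{r_0}|w'(s)|\,ds\le w(r_0)+\Big(\int_{r}^{r_0}|w'(s)|^p s\,ds\Big)^{\frac1p}\Big(\int_{r}^{r_0}s^{-\frac1{p-1}}\,ds\Big)^{\frac{p-1}{p}}.
\end{align*}
The first factor is at most $(2\pi)^{-1/p}\|\nabla v(\cdot,t)\|_{L^p(\Omega)}$, and since $p\in(1,2)$ gives $\tfrac1{p-1}>1$, one has $\int_{r}^{r_0}s^{-1/(p-1)}\,ds\le \tfrac{p-1}{2-p}\,r^{-(2-p)/(p-1)}$, so the second factor is bounded by $C(p)\,r^{-(2-p)/p}$. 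For the remaining range $r_0<r\le R$ (which forces $r>R/2$), the same H\"older estimate over $[r_0,r]\subset[R/2,R]$ contributes only a finite constant $C(p,R)$ in place of the weight, because the interval stays away from the origin. Combining the two ranges and using $1\le R^{(2-p)/p}|x|^{-(2-p)/p}$ for $|x|\le R$ to place both contributions under the same weight, I would arrive at
\begin{align*}
v(x,t)\le C(p,R)\big(\|v(\cdot,t)\|_{L^1(\Omega)}+\|\nabla v(\cdot,t)\|_{L^p(\Omega)}\big)\,|x|^{-\frac{2-p}{p}}\qquad\text{for all }(x,t).
\end{align*}

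Finally I would insert the uniform-in-time bounds: \eqref{mass:2} controls $\|v(\cdot,t)\|_{L^1(\Omega)}$ by $\max\{\|u_0\|_{L^1(\Omega)},\|v_0\|_{L^1(\Omega)}\}\le \|u_0\|_{L^1(\Omega)}+|\Omega|^{1-1/p}\|v_0\|_{L^p(\Omega)}$, while \eqref{semi:2} (applicable because $p\in(1,2)$) controls $\|\nabla v(\cdot,t)\|_{L^p(\Omega)}$ by $C_2(\|u_0\|_{L^1(\Omega)}+\|\nabla v_0\|_{L^p(\Omega)})$; this gives the asserted inequality with a constant depending only on $p$ (and the fixed $R$). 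Multiplying by $|x|^{\kappa}$, $\kappa=\tfrac{2-p}{p}\in(0,1)$, and taking the supremum first over $x$ and then over $t$ yields $\sup_{t>0}|||v(t)|||_{\kappa}<\infty$. The main obstacle is exactly the step producing $r_0$: the reference value $w(r_0)$ must be drawn from an \emph{averaged} $L^1$ bound rather than from a pointwise bound on $v$, or else the estimate becomes circular, and then the outer annulus $R/2<|x|\le R$ has to be treated separately; away from the origin the weight is immaterial, so the singularity $|x|^{-(2-p)/p}$ is genuinely active only as $|x|\to 0$.
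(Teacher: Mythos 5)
The paper does not prove this proposition itself: it simply invokes Lemma~3.2 of \cite{WM2013}, so there is no in-paper argument to compare against. Your reconstruction is mathematically correct and is, to my knowledge, the same standard radial Morrey-type argument used in that reference: anchor $v$ at a radius $r_0\in[R/2,R]$ chosen by averaging so that $w(r_0)$ is controlled by the time-uniform $L^1$-bound~\eqref{mass:2}, then propagate inward via the one-dimensional fundamental theorem of calculus and H\"older against the time-uniform $\|\nabla v(t)\|_{L^p}$ bound~\eqref{semi:2}; the exponent $\frac{1}{p-1}>1$ is exactly what produces the singular weight $|x|^{-(2-p)/p}$, and the outer annulus contributes only a constant. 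You also correctly note that the ``constant depending on $p$'' in the statement implicitly depends on the fixed domain radius $R$; that imprecision is in the original statement, not your argument.
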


We recall the following lemma given in \cite{BS1973} about estimates for the solution of Helmholtz equations.

\begin{lemma}[{\cite[Theorem 22, Lemma 23]{BS1973}}]\label{lem:h1}
Let $\Omega$ be a smooth bounded domain in $\R^n$, $n \ge 1$, and $f \in L^1(\Omega)$ be a nonnegtive function. Then, there exists a unique weak solution of 
\begin{equation}\label{p1}
\begin{cases}
-\Delta z + z = f\qquad &\mathrm{in}\ \Omega,\\
\partial_\nu z = 0 \qquad &\mathrm{on}\ \partial \Omega
\end{cases}
\end{equation}
satisfying $z \in W^{1,p}(\Omega)$ for $1 \leq p < \frac{n}{n-1}$. Moreover, for all $1 \leq p < \frac{n}{n-1}$,
there exists a positive constant $C = C(n,\Omega,p)$ such that 
\begin{align}
\|z\|_{W^{1,p}(\Omega)} \leq C\|f\|_{L^1(\Omega)}.\label{eq:h1}
\end{align}
\end{lemma}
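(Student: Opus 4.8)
The plan is to build $z$ by approximating $f$ from below by bounded data and to read off the $W^{1,p}$ bound from pointwise estimates on the Green's function of $-\Delta + 1$ with Neumann conditions. I would first set $f_k := \min\{f,k\} \in L^\infty(\Omega)$, so that $f_k \ge 0$, $f_k \uparrow f$ almost everywhere and in $L^1(\Omega)$, and $\|f_k\|_{L^1(\Omega)} \le \|f\|_{L^1(\Omega)}$. For each $k$, the Lax--Milgram theorem together with elliptic regularity provides a unique $z_k \in C^1(\overline\Omega)$ solving $-\Delta z_k + z_k = f_k$ with $\partial_\nu z_k = 0$, and the maximum principle gives $0 \le z_k \le z_{k+1}$. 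Denoting by $G$ the Neumann Green's function of $-\Delta + 1$ on $\Omega$, one has $z_k(x) = \int_\Omega G(x,y) f_k(y)\,dy$ together with the classical kernel bounds $0 \le G(x,y) \le C\bigl(1 + |x-y|^{2-n}\bigr)$ — with $|x-y|^{2-n}$ replaced by $\log\frac{1}{|x-y|}$ when $n = 2$ and $G$ bounded when $n = 1$ — and $|\nabla_x G(x,y)| \le C\bigl(1 + |x-y|^{1-n}\bigr)$.

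For $1 \le p < \tfrac{n}{n-1}$ the exponent satisfies $(1-n)p > -n$, so $x \mapsto |x-y|^{1-n}$ lies in $L^p(\Omega)$ with norm bounded uniformly in $y \in \Omega$; Minkowski's integral inequality then yields
\[
\|\nabla z_k\|_{L^p(\Omega)} \le \int_\Omega f_k(y)\,\bigl\| 1 + |\cdot - y|^{1-n} \bigr\|_{L^p(\Omega)}\,dy \le C\,\|f_k\|_{L^1(\Omega)} \le C\,\|f\|_{L^1(\Omega)},
\]
and the same estimate with the weaker singularity $|x-y|^{2-n}$ (or $\log\frac{1}{|x-y|}$) gives $\|z_k\|_{L^p(\Omega)} \le C\|f\|_{L^1(\Omega)}$. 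Hence $\{z_k\}$ is bounded in $W^{1,p}(\Omega)$, uniformly in $k$, for every such $p$.

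Fixing some $p_0 \in (1,\tfrac{n}{n-1})$, reflexivity of $W^{1,p_0}(\Omega)$ and the Rellich--Kondrachov theorem give (along the monotone sequence, so with no need to relabel) $z_k \rightharpoonup z$ in $W^{1,p_0}(\Omega)$ and $z_k \to z$ in $L^{p_0}(\Omega)$. Passing to the limit in $\int_\Omega \nabla z_k \cdot \nabla\varphi + \int_\Omega z_k \varphi = \int_\Omega f_k \varphi$ for $\varphi \in C^\infty(\overline\Omega)$ — the right-hand side converges since $f_k \to f$ in $L^1(\Omega)$ and $\varphi$ is bounded — shows that $z$ is a weak solution of \eqref{p1}, and weak lower semicontinuity of the norms transfers the uniform bound, giving \eqref{eq:h1} for all $1 \le p < \tfrac{n}{n-1}$. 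For uniqueness, let $w$ be a weak solution with datum $0$; given $g \in C^\infty(\overline\Omega)$, solve the Neumann problem $-\Delta\psi + \psi = g$ with $\psi \in C^\infty(\overline\Omega)$ and use $\psi$ as test function: since $\psi$ is smooth and $\partial_\nu\psi = 0$, an integration by parts gives $\int_\Omega wg = \int_\Omega \nabla w\cdot\nabla\psi + \int_\Omega w\psi = 0$, and as $g$ ranges over $C^\infty(\overline\Omega)$ this forces $w \equiv 0$.

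The crux is the pointwise Green's-function estimate for the Neumann problem on a general smooth bounded domain: near the diagonal it matches the whole-space kernel bound, but establishing it uniformly up to $\partial\Omega$ is the real technical work. An alternative in the spirit of Stampacchia and Brezis--Strauss avoids $G$ altogether: test the equation for $z_k$ against the solution of the dual problem with $L^{p'}$ datum and control the weak-$L^{n/(n-1)}$ norm of $\nabla z_k$ by a level-set (truncation) argument, thereby replacing the boundary kernel estimate by an elementary distribution-function computation. In either approach, the content of the lemma is the borderline integrability exponent $\tfrac{n}{n-1}$, dictated by the $(1-n)$-homogeneity of the kernel of $\nabla(-\Delta+1)^{-1}$.
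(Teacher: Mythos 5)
The paper itself offers no proof of this lemma; it is imported verbatim from Br\'ezis--Strauss \cite{BS1973}. Your proposal is essentially correct in structure: the truncation $f_k=\min\{f,k\}$, the monotonicity from the maximum principle, the uniform $W^{1,p}$ bound, passage to the limit by monotone plus weak convergence, and the duality argument for uniqueness are all sound. Where you deviate from the cited source is in how the key estimate $\|z_k\|_{W^{1,p}}\le C\|f_k\|_{L^1}$ is obtained. Br\'ezis--Strauss run the duality/truncation scheme you only sketch at the end: test the equation against solutions of the adjoint problem with bounded data, and control the distribution function of $|\nabla z_k|$ by Stampacchia-type level-set truncation, which yields the weak-$L^{n/(n-1)}$ bound for $\nabla z_k$ and hence $W^{1,p}$ for every $p<n/(n-1)$; no Green's function appears, and the argument extends to operators with merely measurable coefficients. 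Your main line instead reads the bound off the pointwise Neumann Green's-function estimates $0\le G(x,y)\lesssim 1+|x-y|^{2-n}$ and $|\nabla_x G(x,y)|\lesssim 1+|x-y|^{1-n}$ together with Minkowski's integral inequality. That route is shorter to state, but, as you rightly flag, proving those kernel bounds uniformly up to $\partial\Omega$ for a general smooth bounded domain (via a parametrix or reflection construction) is work of comparable depth to the lemma itself, so as written your main argument pushes the difficulty into an unproved black box while your alternative sketch is the one that genuinely closes the gap and matches \cite{BS1973}.
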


The following lemma is stated in \cite[Lemma 2.2]{AY2019} and \cite[Lemma 1]{FJ2021}.
The estimate naturally follows from the combination of the Sobolev embedding and Lemma \ref{lem:h1}.

\begin{lemma}\label{lem:h2}
Let $\Omega$ be a bounded domain in $\R^n$ with smooth boundary, $n \ge 1$, and $f \in C(\overline{\Omega})$ be a nonnegtive function satisfying $\int_\Omega f dx > 0$. If $z \in C^2(\overline{\Omega})$ is a solution of \eqref{p1}, then there exists a positive constant $C = C(n,\Omega)$ such that $z$ fulfils the pointwise estimate
\begin{align}
z \ge C \iO f dx > 0\quad \mathrm{in}\ \Omega\label{eq:h2}
\end{align}
and for any $1 \leq q < \frac{n}{(n-2)_+}$, there is a positive constant $C = C(n,\Omega,q)$ such that
\begin{align}
\|z\|_{L^q(\Omega)} \leq C\|f\|_{L^1(\Omega)}.\label{eq:h3}
\end{align}
\end{lemma}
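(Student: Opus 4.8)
The plan is to treat the two assertions separately, since they rely on different tools. Throughout I would first note that, integrating \eqref{p1} over $\Omega$ and using $\partial_\nu z=0$, one has $\int_\Omega z\,dx=\int_\Omega f\,dx>0$, and that by the uniqueness part of Lemma \ref{lem:h1} the given $C^2(\overline\Omega)$ solution is exactly the weak solution produced there; in particular the $W^{1,p}$-bound of Lemma \ref{lem:h1} applies to it.

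For the $L^q$-estimate \eqref{eq:h3} I would simply combine Lemma \ref{lem:h1} with the Sobolev embedding, keeping track of exponents. Lemma \ref{lem:h1} gives $\|z\|_{W^{1,p}(\Omega)}\le C(n,\Omega,p)\|f\|_{L^1(\Omega)}$ for every $1\le p<\tfrac{n}{n-1}$. When $n=1$ one has $W^{1,p}(\Omega)\hookrightarrow L^\infty(\Omega)$ for $p>1$; when $n=2$, $W^{1,p}(\Omega)\hookrightarrow L^q(\Omega)$ for all $q<\infty$ as soon as $p<2$; when $n\ge3$, $W^{1,p}(\Omega)\hookrightarrow L^{p^{*}}(\Omega)$ with $p^{*}=\tfrac{np}{n-p}$, and $p^{*}\to\tfrac{n}{n-2}$ as $p\uparrow\tfrac{n}{n-1}$. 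Hence, given $1\le q<\tfrac{n}{(n-2)_{+}}$, I would pick $p<\tfrac{n}{n-1}$ close enough to the endpoint that $L^{p^{*}}(\Omega)\hookrightarrow L^q(\Omega)$ (or simply that $q<\infty$ is admissible when $n\le2$), obtaining $\|z\|_{L^q(\Omega)}\le C\|z\|_{W^{1,p}(\Omega)}\le C(n,\Omega,q)\|f\|_{L^1(\Omega)}$.

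The pointwise lower bound \eqref{eq:h2} is the more substantial point, and here Sobolev estimates are not enough: one needs the positivity of the resolvent with a quantitative constant. I would use the Green representation $z(x)=\int_\Omega G(x,y)f(y)\,dy$, where $G$ is the Green function of $-\Delta+1$ on $\Omega$ with homogeneous Neumann boundary conditions, and then argue that $c_0:=\inf_{(x,y)\in\overline\Omega\times\overline\Omega}G(x,y)>0$. Indeed, for fixed $y$, $G(\cdot,y)$ is a nonnegative, not identically zero solution of $-\Delta G(\cdot,y)+G(\cdot,y)=\delta_y$ with $\partial_\nu G(\cdot,y)=0$, so the strong maximum principle together with the Hopf boundary-point lemma gives $G(\cdot,y)>0$ on $\overline\Omega\setminus\{y\}$; $G$ is continuous there, and as $y\to x$ it tends to $+\infty$ for $n\ge2$ (the local singularity is that of the Newtonian, resp.\ logarithmic, kernel) while it stays continuous up to the diagonal for $n=1$. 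Thus $G$ extends to a positive lower-semicontinuous function on the compact set $\overline\Omega\times\overline\Omega$, whose infimum $c_0$ is attained and strictly positive and depends only on $n$ and $\Omega$. Consequently $z(x)\ge c_0\int_\Omega f(y)\,dy>0$ for every $x\in\Omega$, which is \eqref{eq:h2}. (An alternative route avoiding Green's functions: $z$ is a nonnegative supersolution of $-\Delta z+z=0$, so a weak Harnack inequality — after reflecting across $\partial\Omega$ to absorb the Neumann condition and chaining over a finite cover of $\overline\Omega$ — bounds $\inf_\Omega z$ from below by $c\int_\Omega z\,dx=c\int_\Omega f\,dx$.)

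I expect the only real obstacle to be the uniform positive lower bound $c_0>0$ for the Neumann Green function (equivalently: that $(-\Delta+1)^{-1}$ with Neumann data sends nonnegative $L^1$ data to functions bounded below by a fixed multiple of the $L^1$-norm); it rests on the strong maximum principle and the Hopf lemma at the boundary and on the known local behaviour of $G$ on the diagonal, and this is where I would be most careful. Everything else — the integral identity and the Sobolev exponent bookkeeping — is routine once Lemma \ref{lem:h1} is in hand.
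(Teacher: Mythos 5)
Your proof is correct and follows essentially the route the paper invokes. The paper itself does not prove this lemma: it simply cites the corresponding results in the references and remarks that \eqref{eq:h3} follows from Sobolev embedding combined with Lemma \ref{lem:h1}; your exponent bookkeeping realizes exactly that hint, and your uniform positive lower bound on the Neumann Green function of $-\Delta+1$ (via the strong maximum principle in its $c\ge 0$ form together with the Hopf boundary-point lemma and the known diagonal singularity) is the standard mechanism behind the pointwise estimate \eqref{eq:h2} in those references.
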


Fujie--Jiang \cite{FJ2021} introduced the auxiliary function $w(x,t)$, which is the solution of the following  Helmholtz equation:
\begin{equation*}
\begin{cases}
-\Delta w + w = u\qquad &\mathrm{in}\ \Omega \times (0,\infty),\\
\partial_\nu w = 0 \qquad &\mathrm{on}\ \partial \Omega \times (0,\infty).
\end{cases}
\end{equation*}
Here we notice $w = (I - \Delta)^{-1} u \in C^{2,1}(\overline{\Omega}\times (0,\infty))$ thanks to the standard elliptic theory for the external force term $u$ given by Proposition \ref{loc:1}. We also define $w_0 := (I-\Delta)^{-1} u_0$. This auxiliary function $w(x,t)$ plays an important role in obtaining time-independent a priori estimates of the solution and achieving our main results. We introduce the next lemma by applying \cite[Lemma 7]{FJ2021} with $\gamma (v) = e^{-v}$. This lemma allows us to compare the second component $v$ of \eqref{p} with the auxiliary function $w$, which is shown by employing the comparison principle of parabolic equations.

\begin{lemma}[{\cite[Lemma 7]{FJ2021}}]\label{lem:vupper}
Let $v_*$ be as in Lemma \ref{lem:below}. Then there exists a positive constant $K$ depending on $v_*$ such that
for all $(x,t) \in \Omega \times (0,\infty)$,
\begin{align}
v(x,t) \leq \dfrac{1}{1-e^{-v_*}}(w(x,t) + K).\label{bdd:v}
\end{align}
\end{lemma}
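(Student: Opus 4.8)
The plan is to adapt the comparison argument of Fujie--Jiang \cite{FJ2021}, specialised to the motility $\gamma(v)=e^{-v}$. Put $\lambda := \frac{1}{1-e^{-v_*}}$ (which exceeds $1$ since $v_*>0$) and consider the auxiliary function $\psi := v - \lambda w$. The aim is to show that $\psi$ solves a differential inequality making it a subsolution of the linear heat operator $\partial_t - \Delta + 1$ with homogeneous Neumann data, and then to bound $\psi$ from above by a constant supersolution built from the initial data; rearranging the resulting estimate $\psi \le \mathrm{const}$ is exactly \eqref{bdd:v}.

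The first step is to derive the evolution equation for $w$. Differentiating $-\Delta w + w = u$ in $t$ and using \eqref{p} gives $w_t - \Delta w_t = u_t = \Delta(e^{-v}u)$ with $\partial_\nu w_t = 0$, where we note that $\partial_\nu(e^{-v}u) = e^{-v}(\partial_\nu u - u\,\partial_\nu v) = 0$ on $\partial\Omega$. A direct check of the Helmholtz problem (or the identity $(I-\Delta)^{-1}\Delta g = (I-\Delta)^{-1}g - g$, valid whenever $\partial_\nu g = 0$) then yields
\[
w_t = (I-\Delta)^{-1}(e^{-v}u) - e^{-v}u .
\]
Since $e^{-v}u \ge 0$, Lemma \ref{lem:h2} (equivalently, the elliptic maximum principle) gives $(I-\Delta)^{-1}(e^{-v}u) \ge 0$, so in particular $-w_t \le e^{-v}u$. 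The $t$-regularity and boundary regularity of $w$, hence of $\psi$, follow from standard elliptic theory applied to the continuous forcing $u_t$ supplied by Proposition \ref{loc:1}, so all manipulations are classical.

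Next I would compute the equation for $\psi$. Writing $\psi_t - \Delta\psi + \psi = (v_t - \Delta v + v) - \lambda(w_t - \Delta w + w)$ and using $v_t - \Delta v + v = u$ from \eqref{p}, $-\Delta w + w = u$, and the formula for $w_t$ above, one obtains
\[
\psi_t - \Delta\psi + \psi = \bigl(1 - \lambda + \lambda e^{-v}\bigr)u - \lambda(I-\Delta)^{-1}(e^{-v}u) \le \bigl(1 - \lambda + \lambda e^{-v}\bigr)u ,
\]
the inequality holding because the last term is nonpositive. By Lemma \ref{lem:below} we have $v \ge v_*$, so $e^{-v} \le e^{-v_*}$, and the choice of $\lambda$ gives $1 - \lambda + \lambda e^{-v} \le 1 - \lambda(1-e^{-v_*}) = 0$; since $u \ge 0$ this forces $\psi_t - \Delta\psi + \psi \le 0$ in $\Omega\times(0,\infty)$ with $\partial_\nu\psi = \partial_\nu v - \lambda\partial_\nu w = 0$ on $\partial\Omega$. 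Because $w_0 = (I-\Delta)^{-1}u_0 \ge 0$ and $v_0 > 0$, the initial datum satisfies $\psi(\cdot,0) = v_0 - \lambda w_0 \le \|v_0\|_{L^\infty(\Omega)}$, and the constant $\overline\psi := \|v_0\|_{L^\infty(\Omega)}$ is a supersolution of the same operator with $\partial_\nu\overline\psi = 0$. Applying the parabolic comparison principle on each interval $[0,T]$ (legitimate since $\psi \in C^{2,1}(\overline\Omega\times(0,\infty))\cap C(\overline\Omega\times[0,\infty))$) and letting $T\to\infty$ then gives $\psi \le \overline\psi$ on $\overline\Omega\times[0,\infty)$, i.e. $v \le \lambda w + \|v_0\|_{L^\infty(\Omega)} = \lambda\bigl(w + K\bigr)$ with $K := (1-e^{-v_*})\|v_0\|_{L^\infty(\Omega)} > 0$, which is \eqref{bdd:v}.

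The computation is essentially bookkeeping once two facts are in place: the nonlocal remainder $(I-\Delta)^{-1}(e^{-v}u)$ carries a favourable sign, and the sharp constant $\lambda = (1-e^{-v_*})^{-1}$ makes the coefficient $1 - \lambda + \lambda e^{-v}$ nonpositive precisely thanks to the uniform lower bound $v \ge v_*$ of Lemma \ref{lem:below}. I do not anticipate a genuine obstacle; the only points that need a little care are the justification of the identity for $w_t$ (boundary compatibility of $e^{-v}u$ and differentiability of $w$ in $t$) and the use of $w_0 \ge 0$ so that a constant can dominate $\psi(\cdot,0)$. The one creative step is simply recognising that $\psi = v - \lambda w$ with this particular $\lambda$ is the right object.
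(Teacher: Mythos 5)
Your reconstruction is correct and it is, as far as I can tell, exactly the comparison argument behind \cite[Lemma 7]{FJ2021} that the paper invokes without proof (the paper itself only remarks that the lemma ``is shown by employing the comparison principle of parabolic equations''). The evolution law $w_t=(I-\Delta)^{-1}(e^{-v}u)-e^{-v}u$ that you derive is the same identity the paper uses later in \eqref{ite:10.5}, the sign of the nonlocal term and the choice $\lambda=(1-e^{-v_*})^{-1}$ to kill the coefficient of $u$ via $v\ge v_*$ are the key points, and the constant supersolution $\|v_0\|_{L^\infty(\Omega)}$ (using $w_0\ge 0$) closes the argument; so this is the intended proof rather than a genuinely different route.
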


\section{Concentration around grow-up points}\label{sec3}

In the case of two-dimensional Keller--Segel system and some chemotaxis systems, it is well known that the uniform-in-time boundedness of $\iB u\log u dx$ is equivalent to uniform-in-time boundedness of solutions (see \cite{NSY1997, B1998, NSS2000, BBTW2015}). A fundamental method in the proof is the use of the variant of the Sobolev inequality which was developed by Biler--Hebisch--Nadzieja \cite{BHN1994}: given $\varepsilon > 0$, there is a positive constant $C(\varepsilon) > 0$ depending only on $\varepsilon$ and $\Omega$ such that
\begin{align}
\|f\|_{L^3(\Omega)}^3 \leq \varepsilon \|f\|_{H^1(\Omega)}^2 \|f\log |f|\|_{L^1(\Omega)} + C(\varepsilon)\|f\|_{L^1(\Omega)}\label{BHNine}
\end{align}
for any $f \in H^1(\Omega)$, where $\Omega$ is a bounded domain in two--dimensional space. This inequality implies that the term $\|f\|_{L^3(\Omega)}$ can be controlled by the term $\|f\|_{H^1(\Omega)}$ under the boundedness of $L\log L$ for the positive function $f$. Hence this inequality is the important ingredient in a priori estimates.
On the other hand, this inequality does not appear in \cite{FJ2021} when proving uniform-in-time boundedness of solutions to our chemotaxis system with local sensing. Indeed, multiplying the first equation of \eqref{p} by $u$ and integrating by parts, we have
\begin{align*}
\dfrac{1}{2}\dfrac{d}{dt}\|u(t)\|_{L^2(\Omega)}^2 + \iO e^{-v}|\nabla u|^2 dx &= \iO e^{-v} u\nabla v \cdot \nabla u dx.
\end{align*}
Moreover we use the second equation to obtain
\begin{align*}
&\dfrac{1}{2}\dfrac{d}{dt}\|u(t)\|_{L^2(\Omega)}^2 + \iO e^{-v}|\nabla u|^2 dx\\
&= \dfrac{1}{2}\iO e^{-v} u^3 dx -\dfrac{1}{2}\iO e^{-v}v u^2 dx -\dfrac{1}{2}\iO e^{-v}v_t u^2 dx + \dfrac{1}{2}\iO e^{-v}|\nabla v|^2 u^2 dx\\
&\leq \dfrac{1}{2}\iO u^3 dx -\dfrac{1}{2}\iO e^{-v}v u^2 dx -\dfrac{1}{2}\iO e^{-v}v_t u^2 dx + \dfrac{1}{2}\iO e^{-v}|\nabla v|^2 u^2 dx.
\end{align*}
The above inequality means that the function $e^{-v}$ kills the advantage of the good term $\int_\Omega e^{-v}|\nabla u|^2 dx$, and thus even with the inequality \eqref{BHNine}, we cannot control the bad term $\|u\|_{L^3(\Omega)}^3$.
We face the same problem in obtaining time-independent a priori estimates in a localized area. Hence we cannot apply the method in \cite{NSS2000, SS2001, YS2025} directly, thereby we need new techniques.

\subsection{Fundamental inequalities and lemmas}\label{subsec:1}

We first show that there exist grow-up points while examining grow-up solutions of \eqref{p}. This is proved in the same way as in \cite[Lemma 3.1]{YS2025}, thereby we omit the details for brevity here.

\begin{proposition}\label{prop:groe}
Let $(u,v)$ be a grow-up solution of \eqref{p} in $\Omega \times (0,\infty)$. Then it holds that $\mathcal{G} \not= \emptyset$.
\end{proposition}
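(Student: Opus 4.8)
The plan is to argue by contradiction: suppose $(u,v)$ is a grow-up solution but $\mathcal{G} = \emptyset$. Since $\mathcal{G} = \emptyset$ means that no point $x_0 \in \overline{\Omega}$ is a grow-up point, for every $x_0 \in \overline{\Omega}$ there exist a radius $\rho_{x_0} > 0$, a time $\tau_{x_0} > 0$, and a constant $M_{x_0} > 0$ such that $u(x,t) \le M_{x_0}$ for all $(x,t) \in (B_{\rho_{x_0}}(x_0) \cap \overline{\Omega}) \times (\tau_{x_0}, \infty)$; indeed, if no such bound existed near $x_0$ one could extract sequences $t_k \to \infty$, $x_k \to x_0$ with $u(x_k,t_k) \to \infty$, contradicting $x_0 \notin \mathcal{G}$. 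First I would make this local boundedness precise using the openness of $\overline{\Omega}$-neighborhoods together with the continuity of $u$ on $\overline{\Omega} \times [0,\infty)$ from Proposition \ref{loc:1}.

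Next I would invoke compactness of $\overline{\Omega}$: the family $\{B_{\rho_{x_0}}(x_0)\}_{x_0 \in \overline{\Omega}}$ is an open cover of the compact set $\overline{\Omega}$, so it admits a finite subcover $\{B_{\rho_{x_j}}(x_j)\}_{j=1}^N$. Setting $\tau := \max_{1 \le j \le N} \tau_{x_j}$ and $M := \max_{1 \le j \le N} M_{x_j}$, every point of $\overline{\Omega}$ lies in one of the $B_{\rho_{x_j}}(x_j)$, and hence $u(x,t) \le M$ for all $x \in \overline{\Omega}$ and all $t > \tau$. On the interval $(0,\tau]$ the solution is bounded by continuity and compactness of $\overline{\Omega}\times[0,\tau]$. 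Consequently $\sup_{t \ge 0}\|u(t)\|_{L^\infty(\Omega)} < \infty$, which directly contradicts the definition of a grow-up solution, namely $\limsup_{t\to\infty}\|u(t)\|_{L^\infty(\Omega)} = \infty$. Therefore $\mathcal{G} \neq \emptyset$.

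The only point requiring a little care — and the one I expect to be the main (mild) obstacle — is the reduction from ``$x_0$ is not a grow-up point'' to ``$u$ is bounded on a space-time neighborhood of $x_0$ of the form (ball)$\times(\tau_{x_0},\infty)$,'' because the negation of the grow-up-point property a priori only forbids the existence of sequences $(x_k,t_k)$ with $x_k \to x_0$, $t_k \to \infty$ and $u(x_k,t_k)\to\infty$; one must rule out blowup along bounded time sequences separately. This is handled by noting that on any set $(\overline{B_\rho(x_0)}\cap\overline{\Omega})\times[0,T]$ the function $u$ is bounded by continuity, so the only possible obstruction to a uniform bound is an escape to infinity as $t\to\infty$, which is exactly what being a non-grow-up-point excludes; a standard diagonal/subsequence argument packages this cleanly. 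Since the statement of this proposition is word-for-word the analogue of \cite[Lemma 3.1]{YS2025}, the author indeed chooses to omit the details, and the present sketch is precisely the argument being referenced.
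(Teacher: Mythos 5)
Your argument is correct, and it is the standard compactness-of-$\overline{\Omega}$ reduction: negating membership in $\mathcal{G}$ yields, for each $x_0$, a space-time neighbourhood of the form $(B_{\rho_{x_0}}(x_0)\cap\overline{\Omega})\times(\tau_{x_0},\infty)$ on which $u$ is bounded, a finite subcover then gives a uniform bound for large times, and continuity on the compact set $\overline{\Omega}\times[0,\tau]$ handles small times, contradicting $\limsup_{t\to\infty}\|u(t)\|_{L^\infty(\Omega)}=\infty$. The paper omits this proof and simply refers to \cite[Lemma 3.1]{YS2025}; your sketch is precisely the elementary argument that reference carries out, so there is nothing to add.
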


Motivated by Nagai--Senba--Suzuki \cite{NSS2000} and Senba--Suzuki \cite{SS2001}, we introduce a cut-off function $\varphi$ to attain the localized estimates around the grow-up points. The following lemma agrees with the one given in \cite{NSS2000, SS2001} and \cite[Lemma 2.3]{FS2016}.
\begin{lemma}\label{lem:30}
Let $x_0 \in \overline{\Omega}, n \in \N$ and $r > 0$. Then there exists a function $\varphi = \varphi_{(x_0,r,n)} \in C^\infty_c(\R^2)$ satisfying
\begin{align*}
  &\varphi(x)=
  \begin{cases}
    1\quad \mathrm{in}\ B_r(x_0), \\
    0\quad \mathrm{in}\ \R^2 \setminus B_{2r}(x_0),
  \end{cases}\\
  &0 \leq \varphi \leq 1\ \mathrm{in}\ \R^2,\\
  &\partial_\nu \varphi = 0\ \mathrm{on}\ \partial \Omega,\\
  &|\nabla \varphi| \leq A\varphi^{1-\frac{1}{n}},\ |\Delta \varphi| \leq B\varphi^{1-\frac{2}{n}}\ \mathrm{in}\ \R^2,
\end{align*}
where $A, B$ are positive constants determined by $n, r$.
\end{lemma}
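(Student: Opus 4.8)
The plan is to manufacture $\varphi$ from a single fixed radial profile by rescaling and then raising it to the $n$-th power; the exponent $n$ is precisely the device that produces the degenerate bounds $|\nabla\varphi|\lesssim\varphi^{1-1/n}$ and $|\Delta\varphi|\lesssim\varphi^{1-2/n}$. Fix once and for all $\psi\in C^\infty([0,\infty))$ with $0\le\psi\le 1$, $\psi\equiv 1$ on $[0,1]$ and $\psi\equiv 0$ on $[2,\infty)$, put $\eta(x):=\psi\big(|x-x_0|/r\big)$, and define $\varphi:=\eta^{\,n}$. Then $\varphi\in C^\infty_c(\R^2)$, $0\le\varphi\le 1$, $\varphi\equiv 1$ on $B_r(x_0)$ and $\varphi\equiv 0$ on $\R^2\setminus B_{2r}(x_0)$, so every stated property except the Neumann condition is immediate from the construction.

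For the derivative bounds I would simply differentiate. The Leibniz rule gives $\nabla\varphi=n\,\eta^{\,n-1}\nabla\eta$, whence $|\nabla\varphi|\le \big(n\|\nabla\eta\|_{L^\infty(\R^2)}\big)\,\eta^{\,n-1}=A\,\varphi^{1-1/n}$ with $A:=n\|\nabla\eta\|_{L^\infty}$; this is an identity-type estimate, valid on all of $\R^2$, in particular where $\varphi=0$ (both sides vanish there). Likewise $\Delta\varphi=n(n-1)\eta^{\,n-2}|\nabla\eta|^2+n\,\eta^{\,n-1}\Delta\eta$, so, using $0\le\eta\le 1$,
\[
|\Delta\varphi|\le\Big(n(n-1)\|\nabla\eta\|_{L^\infty}^2+n\|\Delta\eta\|_{L^\infty}\Big)\,\eta^{\,n-2}=B\,\varphi^{1-2/n},
\]
$B:=n(n-1)\|\nabla\eta\|_{L^\infty}^2+n\|\Delta\eta\|_{L^\infty}$ (for $n\ge 2$; the case $n=1$ is trivial). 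Since $\|\nabla\eta\|_{L^\infty}=r^{-1}\|\psi'\|_{L^\infty}$ and $\|\Delta\eta\|_{L^\infty}\lesssim r^{-2}\big(\|\psi'\|_{L^\infty}+\|\psi''\|_{L^\infty}\big)$, both $A$ and $B$ depend only on $n$ and $r$ through the fixed profile $\psi$, as required.

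There remains the Neumann condition $\partial_\nu\varphi=0$ on $\partial\Omega$. When $B_{2r}(x_0)\cap\partial\Omega=\emptyset$ — automatic when $x_0$ is interior and $r$ is small, and in particular whenever $x_0=0$ and $2r<R$ for $\Omega=B_R$ — the function $\varphi$ vanishes near $\partial\Omega$ and there is nothing to prove. The only case needing care is $x_0\in\partial\Omega$ (or $\mathrm{dist}(x_0,\partial\Omega)<2r$): here I would replace $|x-x_0|$ in the definition of $\eta$ by a function of boundary normal coordinates $(s,\rho)$, $\rho=\mathrm{dist}(\cdot,\partial\Omega)$, that is smooth and \emph{even} in $\rho$ and comparable to $|x-x_0|$ on a boundary neighbourhood, and then take the $n$-th power as before; evenness in the normal variable forces $\partial_\nu\varphi=0$ on $\{\rho=0\}$, while the smooth coordinate change modifies $A$ and $B$ only by a factor controlled by the (fixed) geometry of $\partial\Omega$, once $r$ is small enough for the chart to cover $B_{2r}(x_0)\cap\Omega$.

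The main obstacle is therefore not the interior estimates — those are a one-line chain-rule computation once the power-$n$ trick is in place — but the bookkeeping in the boundary case: flattening $\partial\Omega$ distorts both the radial variable and the Laplacian, so one must verify that the degenerate bounds survive transplantation with constants still determined by $n$ and $r$ alone. This is standard and is carried out in \cite{NSS2000, SS2001, FS2016}; moreover, in the present paper the lemma is ultimately invoked at $x_0=0$, where the plain radial construction above already suffices and no boundary modification is needed.
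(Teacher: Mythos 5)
Your construction is correct and is essentially the standard one: the paper gives no proof of this lemma, referring instead to \cite{NSS2000, SS2001} and \cite[Lemma 2.3]{FS2016}, and your device of taking an ordinary smooth cut-off and raising it to the $n$-th power (so that the chain rule automatically yields $|\nabla\varphi|\le A\varphi^{1-1/n}$, $|\Delta\varphi|\le B\varphi^{1-2/n}$ via $\eta^{n-1}\le\eta^{n-2}$) is exactly the trick used there. Your handling of the Neumann condition — trivial when $B_{2r}(x_0)\cap\partial\Omega=\emptyset$, and via an even reflection in boundary-normal coordinates otherwise — is also the standard one, and as you note the paper only ever invokes the lemma at $x_0=0$ with $\Omega=B_R$, so the plain radial $\varphi=\eta^n$ already suffices for the application.
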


As noted, we have difficulty in using the inequality established by \cite{BHN1994} for our problem. As a result, we cannot  rely on the use of the localized version of the inequality, which was proposed by \cite{NSS2000, SS2001}. To solve this challenge, we present a novel functional inequality taking a signal-dependent motility function into account.
The proof is based on the Sobolev inequality in a bounded domain within a two--dimensional space:
\begin{align}
\|f\|_{L^2(\Omega)}^2 \leq K_{\mathrm{Sob}}^2 (\|\nabla f\|_{L^1(\Omega)}^2 + \|f\|_{L^1(\Omega)}^2)\label{ine:sob}
\end{align}
for any $f \in W^{1,1}(\Omega)$, where $K_{\mathrm{Sob}} > 0$ is a constant depending only on $\Omega$. 

\begin{proposition}\label{pro:ine}
Let $x_0 \in \overline{\Omega}, 0 < r \ll 1$, and $n$ be a sufficiently large, and more $\varphi = \varphi_{(x_0,r, n)}$ be as in Lemma \ref{lem:30}.
Then positive functions $z \in H^1(\Omega)$ and $v \in H^1(\Omega)$ satisfy the following inequality:
\begin{align*}
\iB z^2 \varphi dx \leq 2K_{\mathrm{Sob}}^2 \int_{B_{2r}(x_0) \cap \Omega}z e^v dx \iB e^{-v}\dfrac{|\nabla z|^2}{z}\varphi dx + K_{\mathrm{Sob}}^2 \left(\dfrac{A^2}{2} + 1\right)\|z\|_{L^1(B_{2r}(x_0) \cap \Omega)}^2,
\end{align*} 
where $A$ is given by Lemma \ref{lem:30}.
\end{proposition}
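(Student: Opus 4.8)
The plan is to prove the inequality by applying the Sobolev inequality \eqref{ine:sob} to the function $f = z\varphi$ (or a suitable power/variant of it) and then carefully estimating the resulting $L^1$-norms of $\nabla(z\varphi)$ by splitting into the term involving $\nabla z$ and the term involving $\nabla\varphi$. First I would note that $z\varphi \in W^{1,1}(\Omega)$ since $z \in H^1(\Omega) \hookrightarrow W^{1,1}(\Omega)$ on a bounded domain and $\varphi$ is smooth and compactly supported; moreover $\varphi \equiv 0$ outside $B_{2r}(x_0)$, so every integral that appears is effectively over $B_{2r}(x_0)\cap\Omega$. Applying \eqref{ine:sob} to $f = z\varphi$ gives
\begin{align*}
\iB z^2 \varphi^2 dx \leq K_{\mathrm{Sob}}^2\left(\|\nabla(z\varphi)\|_{L^1(\Omega)}^2 + \|z\varphi\|_{L^1(\Omega)}^2\right),
\end{align*}
and since $0 \le \varphi \le 1$ we have $\iB z^2\varphi dx \ge \iB z^2\varphi^2 dx$ on the left — wait, that inequality goes the wrong way, so instead I would apply \eqref{ine:sob} to $f$ chosen so that $f^2 = z^2\varphi$, i.e. $f = z\varphi^{1/2}$. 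This is the key technical point: one must work with $z\varphi^{1/2}$ rather than $z\varphi$, and then the gradient bound $|\nabla\varphi^{1/2}| = \tfrac12 \varphi^{-1/2}|\nabla\varphi| \le \tfrac{A}{2}\varphi^{-1/2}\varphi^{1-1/n} = \tfrac{A}{2}\varphi^{1/2 - 1/n}$ (using Lemma \ref{lem:30}) keeps everything integrable provided $n$ is large enough that $1/2 - 1/n \ge 0$, which is why the hypothesis ``$n$ sufficiently large'' appears.

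Concretely, with $f = z\varphi^{1/2}$ we get $\|f\|_{L^2(\Omega)}^2 = \iB z^2\varphi dx$, and
\begin{align*}
\nabla f = \varphi^{1/2}\nabla z + z\nabla(\varphi^{1/2}) = \varphi^{1/2}\nabla z + \tfrac{1}{2} z \varphi^{-1/2}\nabla\varphi,
\end{align*}
so by the triangle inequality
\begin{align*}
\|\nabla f\|_{L^1(\Omega)} \leq \iB \varphi^{1/2}|\nabla z| dx + \dfrac{A}{2}\iB z\varphi^{1/2 - 1/n} dx \leq \iB \varphi^{1/2}|\nabla z| dx + \dfrac{A}{2}\iB z\varphi^{1/2} dx,
\end{align*}
using $\varphi^{1/2-1/n}\le\varphi^{1/2}$ for large $n$ since $0\le\varphi\le1$. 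Also $\|f\|_{L^1(\Omega)} = \iB z\varphi^{1/2}dx$. Then $(a+b+c)^2 \le$ can be handled, but more efficiently I would group: $\|\nabla f\|_{L^1}^2 + \|f\|_{L^1}^2 \le \left(\iB \varphi^{1/2}|\nabla z|dx\right)^2\cdot(\text{something}) + \dots$; actually the cleanest route is to bound the first term using Cauchy–Schwarz in the weighted form
\begin{align*}
\iB \varphi^{1/2}|\nabla z| dx = \iB \left(e^{v/2} z^{1/2}\varphi^{1/2}\right)\left(e^{-v/2}\dfrac{|\nabla z|}{z^{1/2}}\varphi^{0}\cdot\varphi^{?}\right)dx,
\end{align*}
so that one factor gives $\left(\iB e^{v} z \varphi \,dx\right)^{1/2}$ wait — I need to reexamine the weights to land exactly on $\int z e^v dx$ (without $\varphi$) and $\int e^{-v}\frac{|\nabla z|^2}{z}\varphi\,dx$. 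The correct split is $\varphi^{1/2}|\nabla z| = \left(z^{1/2} e^{v/2}\right)\cdot\left(z^{-1/2}e^{-v/2}|\nabla z|\varphi^{1/2}\right)$, and Cauchy–Schwarz gives $\iB \varphi^{1/2}|\nabla z|dx \le \left(\iB z e^v dx\right)^{1/2}\left(\iB e^{-v}\frac{|\nabla z|^2}{z}\varphi\, dx\right)^{1/2}$, with the first integral dominated by $\int_{B_{2r}(x_0)\cap\Omega} z e^v dx$ since $\varphi \le 1$ and $\varphi$ is supported there. Squaring and using $(a+b)^2 \le 2a^2 + 2b^2$ for the two pieces of $\|\nabla f\|_{L^1}$, together with $\left(\iB z\varphi^{1/2}dx\right)^2 \le \|z\|_{L^1(B_{2r}(x_0)\cap\Omega)}^2$, then yields
\begin{align*}
\iB z^2\varphi dx \leq K_{\mathrm{Sob}}^2\left(2\iB z e^v dx \iB e^{-v}\dfrac{|\nabla z|^2}{z}\varphi dx + 2\cdot\dfrac{A^2}{4}\|z\|_{L^1}^2 + \|z\|_{L^1}^2\right),
\end{align*}
which is exactly the claimed bound after collecting the $\|z\|_{L^1}^2$ coefficients into $K_{\mathrm{Sob}}^2(\tfrac{A^2}{2}+1)$.

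The main obstacle — really the only nontrivial point — is getting the book-keeping of the powers of $\varphi$ and the exponential weights $e^{\pm v}$ to line up so that the gradient term comes out as exactly $\int z e^v\,dx \cdot \int e^{-v}\frac{|\nabla z|^2}{z}\varphi\,dx$ with the stated constant $2K_{\mathrm{Sob}}^2$, while ensuring that the substitution $f = z\varphi^{1/2}$ is legitimate (i.e. $f \in W^{1,1}(\Omega)$ with the pointwise gradient formula valid a.e.). Since $z$ is positive and in $H^1$, $z^{1/2}$ and $z^{-1/2}$ cause no issue away from the zero set, and one should remark that positivity of $z$ (hypothesis of the proposition) is used precisely here to make sense of $\frac{|\nabla z|^2}{z}$ and of the Cauchy–Schwarz split; a brief density/truncation argument (replacing $z$ by $z + \delta$ and letting $\delta \to 0$, or noting $z \ge v_*$-type lower bounds are available in the application) handles any remaining regularity scruple. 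The rôle of ``$n$ sufficiently large'' is only to guarantee $\varphi^{1/2-1/n}$ is bounded by $\varphi^{1/2}$, and ``$0 < r \ll 1$'' is not strictly needed for this inequality itself but is carried along for consistency with its later use.
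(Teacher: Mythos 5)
Your argument is correct and is essentially the paper's own proof: both apply the Sobolev inequality \eqref{ine:sob} to $f = z\varphi^{1/2}$, split $\nabla f$ into the $\varphi^{1/2}\nabla z$ and $z\nabla\varphi^{1/2}$ pieces, treat the first by the Cauchy--Schwarz decomposition $\varphi^{1/2}|\nabla z| = (z^{1/2}e^{v/2})(z^{-1/2}e^{-v/2}|\nabla z|\varphi^{1/2})$, bound the second via $|\nabla\varphi^{1/2}|\le \tfrac{A}{2}\varphi^{1/2-1/n}$ from Lemma \ref{lem:30}, and collect constants with $(a+b)^2\le 2a^2+2b^2$. The only cosmetic difference is that the paper bounds $\varphi^{1/2-1/n}\le 1$ directly rather than by $\varphi^{1/2}$, but both give $\|z\|_{L^1(B_{2r}(x_0)\cap\Omega)}^2$ and the same final constant.
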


\begin{proof}
Setting $w = z\varphi^\frac{1}{2}$, we calculate
\begin{align*}
\left(\iO |\nabla w| dx\right)^2 &= \left(\iO |(\nabla z)\varphi^\frac{1}{2} + z\nabla \varphi^\frac{1}{2}| dx\right)^2\\
&\leq 2\left(\iO |\nabla z|\varphi^\frac{1}{2}dx\right)^2 + 2\left(\iO z|\nabla \varphi^\frac{1}{2}|dx\right)^2.
\end{align*}
With respect to the first term on the right-hand side of the above inequality, we use the H\"older inequality to obtain
\begin{align*}
2\left(\iO |\nabla z|\varphi^\frac{1}{2}dx\right)^2 &= 2\left(\iO e^{\frac{1}{2}v}z^\frac{1}{2}
e^{-\frac{1}{2}v}z^{-\frac{1}{2}}|\nabla z|\varphi^\frac{1}{2}dx\right)^2\\
&\leq 2\int_{B_{2r}(x_0) \cap \Omega} ze^v dx \iO e^{-v} \dfrac{|\nabla z|^2}{z}\varphi dx.
\end{align*}
On the other hand, since it holds from Lemma \ref{lem:30} that 
\begin{align*}
|\nabla \varphi^\frac{1}{2}| &= \dfrac{1}{2}|\varphi^{-\frac{1}{2}}\nabla \varphi| \leq \dfrac{A}{2}\varphi^{\frac{1}{2}-\frac{1}{n}}
\end{align*}
on $\{x \in \Omega; \varphi (x) \not= 0\}$, for 
sufficiently large $n$ we can handle the second term as follows:
\begin{align*}
2\left(\iO z|\nabla \varphi^\frac{1}{2}|dx\right)^2
&\leq \dfrac{A^2}{2}\left(\iO z\varphi^{\frac{1}{2}-\frac{1}{n}} dx\right)^2\\
&\leq \dfrac{A^2}{2}\|z\|_{L^1(B_{2r}(x_0)\cap \Omega)}^2.
\end{align*}
Finally using the Sobolev inequality \eqref{ine:sob} with $f = z\varphi^\frac{1}{2}$, we complete the proof.
\end{proof}

\subsection{Proof of Theorem \ref{th:1}}\label{subsec:2}

In this part, we will present a rigorous proof of Theorem \ref{th:1}. To begin with, we confirm that the grow-up point is the only origin provided that the initial data $(u_0,v_0)$ is a couple of radially symmetric functions.

\begin{proposition}\label{pro:rad0}
Let $\Omega = B_R \subset \R^2$ with some $R > 0$ and $(u_0,v_0)$ be a couple of radially symmetric functions satisfying \eqref{c}. Then $\mathcal{G} = \{0\}$.
\end{proposition}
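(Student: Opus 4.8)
\textbf{Proof plan for Proposition \ref{pro:rad0}.}
The plan is to show that no grow-up point can occur away from the origin, i.e.\ that $\mathcal{G}\subset\{0\}$, and then to note that $\mathcal{G}\ne\emptyset$ by Proposition \ref{prop:groe} (together with the fact that a grow-up point sitting on a sphere $|x|=r_0>0$ would, by radial symmetry, force the whole sphere into $\mathcal{G}$, which we will rule out). So the heart of the matter is a localized boundedness estimate: for any $x_0\in\overline\Omega$ with $|x_0|=r_0>0$, I would fix $r$ small enough that $B_{2r}(x_0)\subset \overline\Omega\setminus\{0\}$, take the cut-off $\varphi=\varphi_{(x_0,r,n)}$ from Lemma \ref{lem:30}, and prove that $\sup_{t>0}\int_\Omega u(t)\varphi\,dx<\infty$ would actually be upgraded to an $L^\infty$ bound for $u$ on $B_r(x_0)\times(0,\infty)$ via standard parabolic regularity, contradicting $x_0\in\mathcal{G}$.

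First I would exploit the crucial feature of the radial setting away from the origin: by Proposition \ref{pro:322} we have the pointwise bound $v(x,t)\le C|x|^{-\kappa}$ with $\kappa\in(0,1)$ uniform in $t$, so on $B_{2r}(x_0)$ (which avoids a neighbourhood of $0$) we get $0< v_*\le v(x,t)\le C(r_0)$, hence $e^{v}$ and $e^{-v}$ are both bounded above and below by positive constants depending only on $r_0$ and the data. This is exactly what restores the coercivity that $e^{-v}$ destroyed in the global problem. Next I would run the standard testing argument on the first equation localized by $\varphi$: multiplying $u_t=\Delta(e^{-v}u)$ by $u\varphi^{1-2/n}$ (or a suitable power) and integrating by parts, the $e^{-v}|\nabla u|^2$ term is now genuinely good because $e^{-v}\ge c(r_0)>0$ there, and I would use Proposition \ref{pro:ine} with $z=u$ to absorb the resulting $\int u^2\varphi$-type terms — since $\int_{B_{2r}(x_0)}ue^v\,dx\le C(r_0)\int_{B_{2r}(x_0)}u\,dx\le C(r_0)\|u_0\|_{L^1}$ is uniformly bounded, the quadratic term $\int ze^v\,dx\,\int e^{-v}|\nabla z|^2/z\,\varphi\,dx$ from that proposition gets a small coefficient once we arrange the cut-off and Young's inequality correctly, making it absorbable into the dissipation. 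Handling the $v_t$, $|\nabla v|^2$ and lower-order terms uses the $L^p$-bounds \eqref{semi:1}, \eqref{semi:2} and the boundedness of $v$ on the support of $\varphi$; this should give a Gronwall-type differential inequality for $\int_\Omega u^2\varphi\,dx$ yielding a uniform-in-time bound, and then a Moser/bootstrap iteration (as in \cite{NSS2000,SS2001}) promotes this to $\sup_{t>0}\|u(t)\|_{L^\infty(B_r(x_0))}<\infty$.

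Finally, to conclude $\mathcal{G}=\{0\}$: the argument above shows $x_0\notin\mathcal{G}$ for every $x_0\ne 0$, so $\mathcal{G}\subset\{0\}$; and $\mathcal{G}\ne\emptyset$ by Proposition \ref{prop:groe}, so $\mathcal{G}=\{0\}$. (One should double-check that a single point grow-up set is consistent with Definition — since the solution is radial, $u(x_k,t_k)\to\infty$ with $x_k\to x_0\ne0$ would by symmetry give grow-up along the whole sphere $|x|=|x_0|$, but the localized bound already contradicts grow-up at the one representative point $x_0$, so this case is excluded.) The main obstacle I anticipate is the absorption step in the energy estimate: making sure that after applying Proposition \ref{pro:ine} the coefficient in front of $\int e^{-v}|\nabla u|^2\varphi\,dx$ coming from $2K_{\mathrm{Sob}}^2\int_{B_{2r}(x_0)}ue^v\,dx$ is strictly smaller than the coercivity constant $c(r_0)$ — this is \emph{not} automatic from $L^1$-mass boundedness alone and seems to require either choosing $r$ small (so that $\int_{B_{2r}(x_0)}u\,dx$ is controlled — but this fails if mass concentrates near $x_0$) or, more robustly, replacing the plain $L^2$ test function by $u^{q}$ with $q$ large and iterating, paying attention to how the constants depend on $q$; threading this correctly, together with the parabolic regularity bootstrap needed to pass from uniform $L^2_{\mathrm{loc}}$ to uniform $L^\infty_{\mathrm{loc}}$, is where the real work lies.
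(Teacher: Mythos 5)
Your strategy departs from the paper's at the crucial juncture, and the divergence costs you. You correctly identify the decisive ingredient---the uniform pointwise bound of Proposition~\ref{pro:322}, which forces $v$ to stay bounded (in time) on any set $\{|x|\ge r_0>0\}$---but you then launch into a localized $L^2$-energy estimate with Proposition~\ref{pro:ine}, absorption, a Gronwall step, and a Moser iteration. The paper does none of this. Its proof is a short contradiction argument: if $x_0\ne 0$ were a grow-up point with witnessing sequences $t_k\to\infty$, $x_k\to x_0$, $u(x_k,t_k)\to\infty$, then (i) $v$ must also be unbounded in time near $x_0$, because otherwise $e^{-v}$ is a well-behaved coefficient there and the parabolic regularity theorem of \cite[Theorem 10.1]{LSU1968} applied to $u_t=\Delta(e^{-v}u)$ keeps $u$ bounded near $x_0$, contradicting grow-up; but (ii) Proposition~\ref{pro:322} gives $v(x_k,t_k)\le C(\|u_0\|_{L^1}+\|v_0\|_{W^{1,p}})|x_k|^{-\kappa}$, which stays bounded since $x_k\to x_0\ne 0$. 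Contradiction. No Sobolev inequality, no absorption, and no iteration is required. The boundedness of $v$ away from the origin should be used directly as a coefficient bound, not as a coercivity rescue for an energy functional.

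The gap you flag yourself is real and your proposed workarounds do not close it as written. The coefficient $2K_{\mathrm{Sob}}^2\int_{B_{2r}(x_0)}ue^v\,dx$ appearing in Proposition~\ref{pro:ine} is not small just because $\int_\Omega u\,dx$ is conserved and $e^v\le C(r_0)$: radial symmetry alone does not forbid mass from piling up on a fixed sphere $|x|=r_0$, and shrinking $r$ does not help precisely for that reason. The device that the paper uses later (in the proof of Lemma~\ref{lem:bdd1}, not here) to obtain this smallness is a covering argument specific to radial symmetry: place $N$ pairwise-disjoint balls of radius $r/N$ on the circle $|x|=\eta$; by radial symmetry each carries the same $ue^v$-mass, bounded by $\tfrac{1}{N}\sup_t\int_{B_R\setminus B_{r/2}}ue^v\,dx$, which is finite thanks to Proposition~\ref{pro:322}, so choosing $N$ large makes each local mass as small as desired. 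With that ingredient your route could, in principle, be completed---but it would amount to reproving Lemma~\ref{lem:3.5} and Lemma~\ref{lem:bdd1} just to establish what the far cheaper two-line parabolic-regularity contradiction already gives.
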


\begin{proof}
Since the initial data $(u_0,v_0)$ is a couple of radially symmetric functions, the corresponding solution $(u,v)$ of \eqref{p} is
also radially symmetric. Suppose that there exists a grow-up point $x_0 \not= 0$. It follows from the definition of grow-up points that there exist $\{x_k\} \subset \overline{\Omega}$ and $\{t_k\} \subset (0,\infty)$ such that
\begin{align}
t_k \to \infty,\quad x_k \to x_0,\quad u(x_k, t_k) \to \infty\ \mathrm{as}\ k \to \infty.\label{eq:gro}
\end{align}
Here we notice that $v(x_k, t_k) \to \infty$ as $k \to \infty$. Indeed, suppose to the contrary that $v$ is uniformly bounded in time around $x_0$. This implies $u$ is also uniformly bounded in time around $x_0$ applying \cite[Theorem 10.1]{LSU1968} for the first equation of \eqref{p}, which contradicts \eqref{eq:gro}.
Now Proposition \ref{pro:322} yields that there exists $C$ independent of $t$ such that
\begin{align*}
v(x_k, t_k) \leq C(\|u_0\|_{L^1(\Omega)} + \|v_0\|_{W^{1,p}(\Omega)})|x_k|^{-\frac{2-p}{p}}
\end{align*}
with $p \in (1,2)$. Since the left-hand side diverges to $\infty$ and the right-hand side stays bounded as
$k \to \infty$, we get a contradiction. We thus conclude that the grow-up point is the only origin.
\end{proof}

The following lemma is the variant of $\varepsilon$-regularity theorem. 
There are several versions of $\varepsilon$-regularity estimates
for different types of chemotaxis systems: the Keller--Segel system on smooth bounded domain by Nagai--Senba--Suzuki \cite{NSS2000} and Senba--Suzuki \cite{SS2001}, and the fully parabolic Keller--Segel system with degenerate diffusion in $\R^2$ by Sugiyama \cite{SY2010}, and chemotaxis system with indirect signal production by the author \cite{YS2025}. However, these methods of the proof are not applicable to our system due to the motility function $e^{-v}$. For this reason, we establish alternative lemma for studying long-time behaviors of the solution to \eqref{p} in a localized area. 

\begin{lemma}\label{lem:3.5}
Let $x_0 \in \overline{\Omega}$, $0 < r \ll 1$, and $n$ be sufficiently large, and more $\varphi = \varphi_{(x_0,r,n)}$ be as in Lemma \ref{lem:30}. Assume that it holds that there exists some $\varepsilon > 0$ such that
for all $t \in (0,\infty)$
\begin{align*}
\int_{B_{2r}(x_0)} ue^v dx \leq \varepsilon.
\end{align*}
Then, there exists a positive constant $C$ independent of time such that for all $t \in (0,\infty)$,
\begin{align}
&E(t) + \dfrac{1}{4}\int_0^t e^{s-t}\iO e^{-v}\dfrac{|\nabla u|^2}{u}\varphi dxds\notag\\
&\hspace{0.5cm} + \dfrac{1}{4}\int_0^t e^{s-t} \iO |v_t|^2 \varphi dxds + \dfrac{1}{4}\int_0^t e^{s-t} \iO |\Delta v|^2 \varphi dxds\notag\\
&\hspace{0.5cm}\leq C,\label{eq:3.5555}
\end{align}
where 
\begin{align*}
E(t) = \iO (u\log u)\varphi dx + \iO |\nabla v|^2 \varphi dx + \iO v^2 \varphi dx.
\end{align*}
\end{lemma}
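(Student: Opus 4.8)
The plan is to derive a differential inequality for the weighted energy $E(t)$ by testing the two equations of \eqref{p} against suitably localized quantities and then absorbing the dangerous cubic term using the small-mass hypothesis together with Proposition \ref{pro:ine}. First I would differentiate each of the three pieces of $E(t)$ in time. For $\iO (u\log u)\varphi\,dx$, using $u_t=\Delta(e^{-v}u)$ and integrating by parts twice, one gets a leading good term $-\iO e^{-v}\frac{|\nabla u|^2}{u}\varphi\,dx$, a cross term $\iO e^{-v}\nabla u\cdot\nabla v\,\varphi\,dx$, and commutator terms carrying $\nabla\varphi$ or $\Delta\varphi$, which are controlled by Lemma \ref{lem:30} (the exponents $1-\frac1n$, $1-\frac2n$ make these lower-order once $n$ is large). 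For $\iO|\nabla v|^2\varphi\,dx$ and $\iO v^2\varphi\,dx$, using $v_t=\Delta v-v+u$ one produces the good terms $-\iO|\Delta v|^2\varphi\,dx$ and $-\iO v^2\varphi\,dx$ (plus a $-\iO|\nabla v|^2\varphi$), a coupling term $\iO u\Delta v\,\varphi\,dx$ (equivalently $-\iO\nabla u\cdot\nabla v\,\varphi\,dx+\text{l.o.t.}$), and again $\varphi$-derivative remainders. The idea is that the two cross terms involving $\nabla u\cdot\nabla v$ appearing from the $u$-equation and from the $v$-equation should be combined; rewriting the $v$-equation coupling via $v_t$, the truly problematic contribution reduces to something like $\iO e^{-v}|\nabla v|^2 u\,\varphi\,dx$ plus $\iO u^3 e^{-v}\varphi\,dx$-type terms, exactly as in the display in Section \ref{sec3}.

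Next I would handle the bad terms. The cubic-type term is estimated by writing $\iO u^3\varphi\,dx\le\iO u\cdot u^2\varphi\,dx$ and invoking Proposition \ref{pro:ine} with $z=u$: this bounds $\iO u^2\varphi\,dx$ by $2K_{\mathrm{Sob}}^2\big(\int_{B_{2r}(x_0)}u e^v\,dx\big)\iO e^{-v}\frac{|\nabla u|^2}{u}\varphi\,dx+C\|u\|_{L^1}^2$, and since the mass $\|u(t)\|_{L^1(\Omega)}=\|u_0\|_{L^1(\Omega)}$ is conserved by \eqref{mass:1}, the last term is a fixed constant. The crucial point is that the prefactor $\int_{B_{2r}(x_0)}u e^v\,dx\le\varepsilon$ by hypothesis, so choosing $\varepsilon$ small makes this contribution absorbable into $\tfrac14\iO e^{-v}\frac{|\nabla u|^2}{u}\varphi\,dx$. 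The term $\iO e^{-v}|\nabla v|^2 u\,\varphi\,dx$ is treated using the pointwise/integral bounds on $v$ and $\nabla v$ from \eqref{semi:2} and Lemma \ref{lem:vupper} together with Lemma \ref{lem:h1} applied to the auxiliary function $w$; the remaining mixed terms are handled by Young's inequality, spending small fractions of the good terms $\iO|\Delta v|^2\varphi$, $\iO v^2\varphi$ and $\iO e^{-v}\frac{|\nabla u|^2}{u}\varphi$. After all absorptions one arrives at an inequality of the form
\begin{align*}
\frac{d}{dt}E(t)+E(t)+\frac14\iO e^{-v}\frac{|\nabla u|^2}{u}\varphi\,dx+\frac14\iO|v_t|^2\varphi\,dx+\frac14\iO|\Delta v|^2\varphi\,dx\le C,
\end{align*}
where the extra $+E(t)$ on the left is arranged so that Gronwall in the integrating-factor form $e^{-t}$ yields precisely \eqref{eq:3.5555}. (The $\iO|v_t|^2\varphi$ term is recovered from $v_t=\Delta v-v+u$ once $\iO|\Delta v|^2\varphi$, $\iO v^2\varphi$ and the localized $L^2$ bound on $u$ from Proposition \ref{pro:ine} are under control.) Finally I would multiply by $e^{t}$, integrate from $0$ to $t$, divide by $e^{t}$, and note that $E(0)$ is finite because $(u_0,v_0)\in W^{1,\infty}$.

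The main obstacle I expect is the coupling/cross terms: making sure the $\nabla u\cdot\nabla v$ contributions from the two equations genuinely cancel or combine into terms that are of the size $\iO u^3\varphi$ and $\iO e^{-v}u|\nabla v|^2\varphi$, rather than leaving an uncontrolled $\iO|\nabla u||\nabla v|\varphi$ that cannot be absorbed (the motility $e^{-v}$ spoils the naive Cauchy--Schwarz split, exactly the difficulty flagged in Section \ref{sec3}). The resolution should be to keep the factor $e^{-v}$ attached — i.e. estimate $\iO e^{-v}\nabla u\cdot\nabla v\,\varphi\,dx\le\frac18\iO e^{-v}\frac{|\nabla u|^2}{u}\varphi\,dx+2\iO e^{-v}u|\nabla v|^2\varphi\,dx$ — and then control $\iO e^{-v}u|\nabla v|^2\varphi$ by the uniform bounds on $v$ and $\nabla v$, never trying to treat $\nabla v$ and $\nabla u$ symmetrically. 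A secondary technical nuisance is bookkeeping the $\varphi^{1-1/n}$, $\varphi^{1-2/n}$ remainder terms and checking they are all lower order after one more application of Young's inequality and the localized $L^1$/$L^2$ control of $u$; these are routine but must be done with $n$ chosen large enough (depending only on $r$) before $\varepsilon$ is fixed.
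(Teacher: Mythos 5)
Your overall skeleton matches the paper's: differentiate the localized entropy $\iO (u\log u)\varphi\,dx$ and the two localized $v$-quantities, collect the cross terms via the second equation, absorb the localized $\iO u^2\varphi$ into $\iO e^{-v}\frac{|\nabla u|^2}{u}\varphi$ through Proposition~\ref{pro:ine} and the smallness hypothesis, close with the $+E(t)$ / integrating-factor step. That much is sound.

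However, there is a genuine gap in how you propose to treat the term $\iO e^{-v}u|\nabla v|^2\varphi\,dx$. You say it is ``treated using the pointwise/integral bounds on $v$ and $\nabla v$ from \eqref{semi:2} and Lemma~\ref{lem:vupper} together with Lemma~\ref{lem:h1} applied to $w$.'' But \eqref{semi:2} only gives $\nabla v(t)\in L^p(\Omega)$ uniformly in $t$ for $p<2$, Lemma~\ref{lem:vupper} only bounds $v$ (not $\nabla v$) by $w$, and Lemma~\ref{lem:h1} again only gives $W^{1,p}$ for $p<2$. None of these yields the integrability of $\nabla v$ that would be needed to pair against $u\in L^1$ (or even $u\in L^2_{\rm loc}$): after Young's inequality this term produces $\iO e^{-v}|\nabla v|^4\varphi\,dx$, and there is no static argument controlling a fourth power of $\nabla v$ uniformly in time from an $L^p$-bound with $p<2$. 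This is precisely where the paper's proof makes its essential extra move: it introduces $\iO|\nabla(v\varphi^{1/4})|^4\,dx$ and controls the \emph{time-integrated} quantity $\int_0^t e^s\iO|\nabla(v\varphi^{1/4})|^4\,dx\,ds$ via the localized maximal-regularity estimate of \cite[Lemma~2.8]{FS2016}, which produces yet another $\int_0^t e^s\iO u^2\varphi\,dx\,ds$ term (the constant $D_0$ in \eqref{eq:3.4.18}); only then is the smallness $\sup_t\int_{B_{2r}}ue^v\,dx\le\varepsilon$ used, with $\varepsilon$ tied to $D_0$, to absorb everything. Your proposal omits this maximal-regularity step entirely, and the closure fails without it: the argument ``bound $\nabla v$ uniformly and absorb'' does not go through.

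Two smaller points. First, the threshold $\varepsilon$ is not free: it must be chosen \emph{after} $D_0$ (hence after the maximal-regularity constant) is known, since $D_0$ enters the coefficient of $\int_0^t e^s\iO u^2\varphi\,dx\,ds$ that is absorbed by \eqref{eq:3.4.20}; you should not present it as ``choosing $\varepsilon$ small'' at an earlier stage. Second, the recovery of $\iO|v_t|^2\varphi$ is obtained in the paper by testing the $v$-equation against $v_t\varphi$ directly (and against $-\Delta v\,\varphi$ and $v\varphi$), not a posteriori from $v_t=\Delta v-v+u$; the latter would require a localized $L^2$ control of $u$ pointwise in time, which is not available (only the time-integrated $\int_0^t e^{s-t}\iO u^2\varphi\,dx\,ds$ is controlled), so your parenthetical remark about recovering $\iO|v_t|^2\varphi$ afterwards is not quite right.
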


\begin{proof}
The key idea of the proof is to impose differentiation on $\varphi$ by integrating by parts. 
Let us examine the term $\iB (u\log u)\varphi dx$. By using the first equation of \eqref{p} and integrating by parts, it holds that 
\begin{align}
&\dfrac{d}{dt}\left(\iB (u\log u) \varphi dx\right)\notag\\ &= \iB u_t (\log u) \varphi dx + \iB u_t \varphi dx\notag\\
&= \iB \nabla \cdot (ue^{-v} \nabla (\log u - v)) (\log u)\varphi dx + \iB \nabla \cdot (ue^{-v}\nabla (\log u - v)) \varphi dx\notag\\
&= - \iB e^{-v}\nabla (\log u - v) \cdot (\nabla u) \varphi dx - \iB (u\log u) e^{-v} \nabla (\log u - v) \cdot \nabla \varphi dx\notag\\
&\hspace{0.5cm}-\iB ue^{-v}\nabla (\log u - v) \cdot \nabla \varphi dx.\notag
\end{align}
Here, we have
\begin{align*}
- \iB e^{-v}\nabla (\log u - v) \cdot (\nabla u) \varphi dx &= -\iB e^{-v}\dfrac{|\nabla u|^2}{u}\varphi dx + \iB e^{-v} (\nabla v \cdot \nabla u) \varphi dx
\end{align*}
and
\begin{align*}
&- \iB (u\log u) e^{-v} \nabla (\log u - v) \cdot \nabla \varphi dx\\
&= - \iB (\log u) e^{-v} \nabla u \cdot \nabla \varphi dx + \iB (u\log u) e^{-v} \nabla v \cdot \nabla \varphi dx
\end{align*}
and moreover
\begin{align*}
-\iB ue^{-v}\nabla (\log u - v) \cdot \nabla \varphi dx = -\iB e^{-v} \nabla u \cdot \nabla \varphi dx + \iB ue^{-v} \nabla v \cdot \nabla \varphi dx.
\end{align*}
Gathering these information, we get
\begin{align*}
&\dfrac{d}{dt}\iB (u\log u) \varphi dx + \iB e^{-v}\dfrac{|\nabla u|^2}{u}\varphi dx\\
&= \iB e^{-v} u (\log u + 1) \nabla v \cdot \nabla \varphi dx - \iB e^{-v}(\log u + 1)\nabla u \cdot \nabla \varphi dx + \iB e^{-v} (\nabla v \cdot \nabla u) \varphi dx.
\end{align*}
Noticing that
\begin{align*}
&-\iB e^{-v} (\log u +1) \nabla u \cdot \nabla \varphi dx\\
 &= \iB e^{-v} (\log u + 1) u \Delta \varphi dx -\iB u e^{-v} (\log u +1) \nabla v \cdot \nabla \varphi dx + \iB e^{-v} \nabla u \cdot \nabla \varphi dx,
\end{align*}
we establish
\begin{align}
&\dfrac{d}{dt}\iB (u\log u) \varphi dx + \iB e^{-v}\dfrac{|\nabla u|^2}{u}\varphi dx\notag\\
&= \iB e^{-v} (\log u + 1)u \Delta \varphi dx + \iB e^{-v} (\nabla v \cdot \nabla u) \varphi dx + \iB e^{-v} \nabla u \cdot \nabla \varphi dx.\label{eq:3333}
\end{align}
Moreover, Lemma \ref{lem:30} and $|u\log u| \leq u\log u + \frac{2}{e}$ imply
\begin{align}
\iB e^{-v}(1 + \log u ) u \Delta \varphi dx &\leq B\iB e^{-v} u \varphi^{1-\frac{2}{n}} dx + B\iB e^{-v}|u\log u| \varphi^{1-\frac{2}{n}} dx\notag\\
&\leq B \iB e^{-v}|u \log u | \varphi^{1-\frac{2}{n}}dx + B\iB u \varphi^{1-\frac{2}{n}}
dx\notag\\
&\leq B \iB e^{-v}(u\log u) \varphi^{1-\frac{2}{n}}dx + B \iB \dfrac{2}{e}e^{-v}\varphi^{1-\frac{2}{n}} dx + B\iB u \varphi^{1-\frac{2}{n}}dx.\notag
\end{align}
Now, we can find a positive constant $C$ satisfying
\begin{align}
B\left(u \log u + \dfrac{2}{e}\right) \leq u^{1 + \frac{1}{n}} + C.\label{ulogu}
\end{align}
Hence, it holds that
\begin{align}
\iB e^{-v}(1 + \log u ) u \Delta \varphi dx &\leq \iB e^{-v}u ^{1 + \frac{1}{n}} \varphi^{1 - \frac{2}{n}}dx
+\iO C\varphi^{1-\frac{2}{n}} dx 
 + B\iB u \varphi^{1-\frac{2}{n}} dx\notag\\
 &\leq \iO e^{-v}u ^{1 + \frac{1}{n}} \varphi^{1 - \frac{2}{n}}dx
+ B\iB u \varphi^{1-\frac{2}{n}} dx + C.\notag
\end{align}
We remark that there exists $ n_0 \in \N$ such that
\begin{align*}
\left(1 - \dfrac{2}{n}\right)\cdot \left( \dfrac{2n}{n+1}\right) = \dfrac{2(n-2)}{n + 1} \ge 1 \quad \mathrm{for\ all}\ n \ge n_0,
\end{align*}
and $\varphi^\alpha \leq \varphi$ for $\alpha \ge 1$ due to the properties of $\varphi$. Here, we may choose sufficiently large $ n \ge n_0$. 
Using the Young inequality and the H\"{o}lder inequality, we have for any $\varepsilon > 0$
\begin{align}
\iB e^{-v}(1 + \log u )u \Delta \varphi dx &\leq \left(\iB e^{-v}u^2 \varphi dx\right)^\frac{n+1}{2n} |\Omega|^\frac{n-1}{2n} + B\iB u \varphi^{1-\frac{2}{n}}dx + C\notag\\
&\leq \varepsilon \iB e^{-v}u^2 \varphi dx + C(\varepsilon)\label{eq:3.4.4}
\end{align}
due to the mass conservation law \eqref{mass:1}. We next deal with the term $ \iB e^{-v} \nabla u \cdot \nabla \varphi dx$. Thanks to Lemma \ref{lem:30} and sufficiently large $n$, and the mass conservation law \eqref{mass:1}, 
we obtain from the positivity of the solution and the Cauchy--Schwarz inequality, and the Young inequality that
\begin{align}
\iB e^{-v} \nabla u \cdot \nabla \varphi dx &\leq  A\iO e^{-v}|\nabla u|\varphi^{1-\frac{1}{n}} dx\notag\\
&\leq A\left(\iO e^{-v}\dfrac{|\nabla u|^2}{u}\varphi dx\right)^\frac{1}{2}\left(\iO e^{-v}u\varphi^{1-\frac{2}{n}} dx\right)^\frac{1}{2}\notag\\
&\leq A\|u_0\|_{L^1(\Omega}^\frac{1}{2}\left(\iO e^{-v}\dfrac{|\nabla u|^2}{u}\varphi dx\right)^\frac{1}{2}\notag\\
&\leq \dfrac{1}{2}\iO e^{-v}\dfrac{|\nabla u|^2}{u}\varphi dx + \dfrac{A^2\|u_0\|_{L^1(\Omega)}}{2}.
\label{eq:3.4.5}
\end{align}
As to the rest of the term on the right--hand side of \eqref{eq:3333}, by substituting 
the second equation of \eqref{p} and integrating by parts we obtain the following consequence from the positivity of the solution:
\begin{align*}
&\iB e^{-v}(\nabla v \cdot \nabla u) \varphi dx\\
&= -\iB e^{-v}u (\Delta v)\varphi dx - \iB e^{-v} u \nabla v \cdot \nabla \varphi dx + \iB e^{-v}u|\nabla v|^2 \varphi dx\\
&=- \iB e^{-v}u (v_t + v - u)\varphi dx - \iB e^{-v} u \nabla v \cdot \nabla \varphi dx + \iB e^{-v}u|\nabla v|^2 \varphi dx\\
&\leq - \iB e^{-v}u v_t dx + \iB e^{-v}u^2 \varphi dx - \iB e^{-v} u \nabla v \cdot \nabla \varphi dx + \iB e^{-v}u|\nabla v|^2 \varphi dx.
\end{align*}
Similar to the discussion in \eqref{eq:3.4.5}, we get from Lemma \ref{lem:30} that
\begin{align}
&\iB e^{-v}(\nabla v \cdot \nabla u) \varphi dx\notag\\
&\leq \varepsilon \iB e^{-v}u^2 \varphi dx + \dfrac{1}{4\varepsilon}\iB e^{-v}|v_t|^2 \varphi dx +  \iB e^{-v}u^2 \varphi dx\notag\\
&\hspace{0.5cm} +A \iB e^{-v}u |\nabla v|\varphi^{1-\frac{1}{n}}dx + \iB e^{-v} u|\nabla v|^2 \varphi dx \notag\\
&\leq (2\varepsilon + 1) \iB e^{-v}u^2 \varphi dx + \dfrac{1}{4\varepsilon}\iB e^{-v}|v_t|^2 \varphi dx + \dfrac{1}{4\varepsilon}\iB e^{-v}|\nabla v|^4 \varphi dx\notag\\
&\hspace{0.5cm}+ \varepsilon \iB e^{-v}u^2 \varphi dx + \dfrac{A^2}{4\varepsilon}\iB |\nabla v|^2 \varphi^{1-\frac{2}{n}} dx\notag\\
&= (3\varepsilon + 1)\iB e^{-v} u^2 \varphi dx + + \dfrac{1}{4\varepsilon}\iB |v_t|^2 \varphi dx + \dfrac{1}{4\varepsilon}\iB |\nabla v|^4 \varphi dx+ \dfrac{A^2}{4\varepsilon}\iB |\nabla v|^2 \varphi^{1-\frac{2}{n}} dx.\label{eq:3.4.6}
\end{align}
Thanks to \eqref{semi:1}, 
the term $\iB |\nabla v|^2\varphi^{1-\frac{2}{n}} dx$ on the right--hand side of the above inequality can be evaluated as
\begin{align}
&\dfrac{A^2}{4\varepsilon}\iB |\nabla v|^2 \varphi^{1-\frac{2}{n}} dx\notag\\
&= \dfrac{A^2}{4\varepsilon}\iB (\nabla v \cdot \nabla v) \varphi^{1-\frac{2}{n}} dx\notag\\
&=- \dfrac{A^2}{4\varepsilon}\iB v(\Delta v) \varphi^{1-\frac{2}{n}} dx - \dfrac{A^2}{4\varepsilon}\iB v \nabla v \cdot  \nabla \varphi^{1-\frac{2}{n}} dx\notag\\
&\leq \dfrac{1}{4\varepsilon}\iB |\Delta v|^2 \varphi dx + \dfrac{A^4}{16\varepsilon}\iB v^2 \varphi^{1-\frac{4}{n}} dx - \dfrac{A^2}{8\varepsilon} \iB \nabla v^2 \cdot \nabla \varphi^{1-\frac{2}{n}} dx\notag\\
&\leq \dfrac{1}{4\varepsilon}\iB |\Delta v|^2 \varphi dx + C(\varepsilon) + \dfrac{A^2}{8\varepsilon}\|\Delta \varphi^{1-\frac{2}{n}}\|_{L^\infty(\Omega)}\iO v^2 dx\notag\\
&\leq \dfrac{1}{4\varepsilon}\iB |\Delta v|^2 \varphi dx + C(\varepsilon)\label{eq:3.4.7}
\end{align}
for sufficiently large $n$. Since $(a + b)^4 \leq 8(a^4 + b^4)$ for $a,b \ge 0$, we obtain from \eqref{semi:1} that
\begin{align}
\dfrac{1}{4\varepsilon}\iB |\nabla v|^4 \varphi dx &= \dfrac{1}{4\varepsilon}\iB |\nabla (v\varphi^\frac{1}{4}) - v\nabla (\varphi^\frac{1}{4})|^4 dx\notag\\
&\leq \dfrac{2}{\varepsilon}\iB |\nabla (v\varphi^\frac{1}{4})|^4 dx + \dfrac{2\|\nabla \varphi^\frac{1}{4}\|_{L^\infty(\Omega)}^4}{\varepsilon}\iB v^4  dx\notag\\
&\leq \dfrac{2}{\varepsilon}\iB |\nabla (v\varphi^\frac{1}{4})|^4 dx + C(\varepsilon).\label{eq:3.4.8}
\end{align}
Combining \eqref{eq:3.4.4}-\eqref{eq:3.4.8} with \eqref{eq:3333} implies that
\begin{align}
&\dfrac{d}{dt}\iB (u\log u)\varphi dx + \dfrac{1}{2}\iB e^{-v}\dfrac{|\nabla u|^2}{u}\varphi dx\notag\\
&\leq (4\varepsilon + 1)\iB e^{-v}u^2 \varphi dx + \dfrac{1}{4\varepsilon}\iB |v_t|^2 \varphi dx\notag\\
&\hspace{0.5cm} + \dfrac{1}{4\varepsilon}\iB |\Delta v|^2 \varphi dx + \dfrac{2}{\varepsilon}\iB |\nabla (v\varphi^\frac{1}{4})|^4 dx + C(\varepsilon).\label{eq:3.4.9}
\end{align}
Next, multiplying the second equation of \eqref{p} by $-(\Delta v)\varphi$ and integrating by parts, we have 
\begin{align*}
\dfrac{1}{2}\dfrac{d}{dt}\iB |\nabla v|^2 \varphi dx &+\iB |\Delta v|^2 \varphi dx + 
\iB |\nabla v|^2 \varphi dx \\
&= -\iB u(\Delta v) \varphi dx - \iB v_t\nabla v \cdot \nabla \varphi dx - \iB v\nabla v \cdot \nabla \varphi dx.
\end{align*}
Furthermore, integrating by parts after multiplying the second equation of \eqref{p} by $v_t \varphi$ and $v \varphi$ respectively, we similarly get 
\begin{align*}
\iB |v_t|^2\varphi dx &+ \dfrac{1}{2}\dfrac{d}{dt}\iB |\nabla v |^2\varphi dx + \dfrac{1}{2}\dfrac{d}{dt}\iB v^2 \varphi dx = \iB u v_t \varphi dx - \iB v_t\nabla v \cdot \nabla \varphi dx
\end{align*}
and 
\begin{align*}
\dfrac{1}{2}\dfrac{d}{dt}\iB v^2 \varphi dx + \iB |\nabla v|^2 \varphi dx + \iB v^2 \varphi dx = \iB uv\varphi dx - \iB v \nabla v \cdot \nabla \varphi dx.
\end{align*}
By multiplying each of the above inequalities by $\lambda$, which will be chosen later, and then collecting them, it holds from the H\"{o}lder inequality and Lemma \ref{lem:30} that
\begin{align}
&\lambda\dfrac{d}{dt}\iB |\nabla v|^2 \varphi dx + \lambda\dfrac{d}{dt}\iB v^2\varphi dx + \lambda \iO |\Delta v|^2 \varphi dx + 2\lambda\iB |\nabla v|^2\varphi dx\notag\\
 &\hspace{0.5cm}+ \lambda \iO v^2 \varphi dx +\lambda \iO |v_t|^2 \varphi dx\notag\\
&= -\lambda \iO u(\Delta v)\varphi dx - 2\lambda \iO v_t \nabla v \cdot \nabla \varphi dx -2\lambda \iO v \nabla v \cdot \nabla \varphi dx\notag\\
&\hspace{0.5cm}+ \lambda \iO uv_t \varphi dx + \lambda \iO uv\varphi dx\notag\\
&=: I + II + III + IV + V.\label{eq:3.4.10}
\end{align}
Here, integrating by parts and  $v \in L^\infty(0,\infty; L^2(\Omega))$ imply
\begin{align}
III &= -2\lambda \iO v \nabla v \cdot \nabla \varphi dx\notag\\
&= \lambda \iO v^2 \Delta \varphi dx\notag\\
&\leq C(\lambda).\label{eq:3.4.11}
\end{align}
With respect to $I$, $IV$, and $V$ respectively, we see from the H\"{o}lder inequality that
\begin{align}
I &\leq \lambda\iB u^2 \varphi dx + \dfrac{\lambda}{4}\iO |\Delta v|^2 \varphi dx\label{eq:3.4.12}
\end{align}
and 
\begin{align}
IV \leq \lambda\iB u^2 \varphi dx + \dfrac{\lambda}{4}\iO |v_t|^2 \varphi dx\label{eq:3.4.13}
\end{align}
and moreover from \eqref{semi:1}
\begin{align}
V &\leq \lambda\iB u^2 \varphi dx + \dfrac{\lambda}{4}\iO v^2 \varphi dx\notag\\
&\leq \lambda\iB u^2 \varphi dx + C(\lambda).\label{eq:3.4.14}
\end{align}
By using the H\"{o}lder inequality and Lemma \ref{lem:30}, the term $II$ can be calculated as follows:
\begin{align*}
II &\leq \dfrac{\lambda}{4}\iO |v_t|^2 \varphi dx + 4A^2\lambda \iO |\nabla v|^2 \varphi^{1-\frac{2}{n}} dx.
\end{align*}
Similar to \eqref{eq:3.4.7}, the second term on the right--hand side of the above inequality can be rewritten as
\begin{align}
4A^2\lambda \iO |\nabla v|^2 \varphi^{1-\frac{2}{n}}dx \leq \dfrac{\lambda}{4}\iO |\Delta v|^2 \varphi dx + C(\lambda).\label{eq:3.4.15}
\end{align}
Gathering \eqref{eq:3.4.10}-\eqref{eq:3.4.15} yields that
\begin{align}
&\lambda\dfrac{d}{dt}\iB |\nabla v|^2 \varphi dx + \lambda\dfrac{d}{dt}\iB v^2\varphi dx + \dfrac{\lambda}{2}\iO |\Delta v|^2 \varphi dx + 2\lambda\iB |\nabla v|^2\varphi dx\notag\\
&\hspace{0.5cm}+ \lambda\iO v^2 \varphi dx + \dfrac{\lambda}{2}\iO |v_t|^2 \varphi dx\notag\\
& \leq 3\lambda\iO u^2\varphi dx + C(\lambda).\label{eq:3.4.16}
\end{align}
By collecting \eqref{eq:3.4.9} and \eqref{eq:3.4.16}, and choosing $\varepsilon = \lambda = 1$, we have
\begin{align*}
&\dfrac{d}{dt}\left(\iO (u\log u) \varphi dx + \iO |\nabla v|^2 \varphi dx + \iO v^2 \varphi dx\right) + 2\iO |\nabla v|^2 \varphi dx + \iO v^2 \varphi dx\notag\\
 &\hspace{0.5cm} + \dfrac{1}{4}\iO |\Delta v|^2 \varphi dx + \dfrac{1}{4}\iO |v_t|^2 \varphi dx + \dfrac{1}{2}\iO e^{-v}\dfrac{|\nabla u|^2}{u}\varphi dx\notag\\
 &\leq 5\iO e^{-v}u^2\varphi dx + 2 \iO |\nabla (v\varphi^\frac{1}{4})|^4 dx + 3\iO u^2\varphi dx + C\notag\\
 &\leq 8\iO u^2 \varphi dx + 2 \iO |\nabla (v\varphi^\frac{1}{4})|^4 dx + C.
\end{align*}
We add $\iO (u\log u)\varphi dx$ to the above inequality to get
\begin{align*}
&\dfrac{d}{dt}\left(\iO (u\log u) \varphi dx + \iO |\nabla v|^2 \varphi dx + \iO v^2 \varphi dx\right) + \iO (u\log u)\varphi dx + 2\iO |\nabla v|^2 \varphi dx\\
&\hspace{0.5cm}+ \iO v^2 \varphi dx + \dfrac{1}{4}\iO |\Delta v|^2 \varphi dx + \dfrac{1}{4}\iO |v_t|^2 \varphi dx + \dfrac{1}{2}\iO e^{-v}\dfrac{|\nabla u|^2}{u}\varphi dx\\
&\leq  8\iO u^2 \varphi dx + 2 \iO |\nabla (v\varphi^\frac{1}{4})|^4 dx + C + \iO (u\log u)\varphi dx
\end{align*}
and thus we obtain from the combination of \eqref{ulogu} and the H\"{o}lder inequality that
\begin{align*}
&\dfrac{d}{dt}\left(\iO (u\log u) \varphi dx + \iO |\nabla v|^2 \varphi dx + \iO v^2 \varphi dx\right) + \iO (u\log u)\varphi dx + 2\iO |\nabla v|^2 \varphi dx\\
&\hspace{0.5cm}+ \iO v^2 \varphi dx + \dfrac{1}{4}\iO |\Delta v|^2 \varphi dx + \dfrac{1}{4}\iO |v_t|^2 \varphi dx + \dfrac{1}{2}\iO e^{-v}\dfrac{|\nabla u|^2}{u}\varphi dx\\
&\leq 9\iO u^2 \varphi dx +  2 \iO |\nabla (v\varphi^\frac{1}{4})|^4 dx + C.
\end{align*}
Introducing
\begin{align*}
E(t) := \iB (u\log u) \varphi dx + \iB |\nabla v|^2 \varphi dx + \iB v^2 \varphi dx,
\end{align*}
we confirm that for all $t \in (0,\infty)$,
\begin{align*}
&\dfrac{d}{dt}E(t) + E(t) + \dfrac{1}{2}\iB e^{-v}\dfrac{|\nabla u|^2}{u}\varphi dx + \dfrac{1}{4}\iB |v_t|^2 \varphi dx + \dfrac{1}{4}\iB |\Delta v|^2 \varphi dx\\
&\leq 9\iO u^2 \varphi dx + 2 \iO |\nabla (v\varphi^\frac{1}{4})|^4 dx + C
\end{align*}
with a positive constant $C$. Multiplying the above inequality by $e^t$ and integrating over $(0,t)$ for all $t \in (0,\infty)$, we infer that
\begin{align}
&E(t)e^t + \dfrac{1}{2}\int_0^t e^s\iO e^{-v} \dfrac{|\nabla u|^2}{u}\varphi dxds + \dfrac{1}{4}\int_0^t e^s \iO |v_t|^2 \varphi dxds + \dfrac{1}{4}\int_0^t e^s \iO |\Delta v|^2 \varphi dxds\notag\\
&\leq E(0) + 9\int_0^t e^s\iO u^2 \varphi dx ds + 2\int_0^t e^s \iO |\nabla (v\varphi^\frac{1}{4})|^4 dxds + Ce^t.\label{eq:3.4.17}
\end{align}
Applying \cite[Lemma 2.8]{FS2016}, which is the variant form of maximal regularity for parabolic equations, to $2\int_0^t e^s \iO |\nabla (u\varphi^\frac{1}{4})|^4 dxds$, we get
\begin{align}
2\int_0^t e^s \iO |\nabla (v\varphi^\frac{1}{4})|^4 dxds \leq D_0 \int_0^t e^s \iO u^2 \varphi dxds + C(1 + e^t),\label{eq:3.4.18}
\end{align}
where $D_0$ is dependent on the Sobolev constant, the constant by the elliptic regularity and by the maximal regularity for parabolic equations, and $\|u_0\|_{L^1(\Omega)}$, and additionally $C$ is a positive constant. Therefore \eqref{eq:3.4.17} and \eqref{eq:3.4.18} yield that for all $t \in (0,\infty)$,
\begin{align}
&E(t)e^t + \dfrac{1}{2}\int_0^t e^s \iO e^{-v}\dfrac{|\nabla u|^2}{u}\varphi dx ds+ \dfrac{1}{4}\int_0^t e^s \iO |v_t|^2 \varphi dxds + \dfrac{1}{4}\int_0^t e^s \iO |\Delta v|^2 \varphi dxds\notag\\
&\leq (9 + D_0)\int_0^t e^s \iO u^2 \varphi dxds + C(1 + e^t).\label{eq:3.4.19}
\end{align}
If we have the assumption that 
\begin{align*}
\sup_{t > 0} \int_{B_{2r}} ue^v dx \leq \dfrac{1}{8(9+D_0)K_{\mathrm{Sob}}^2},
\end{align*}
we infer from Proposition \ref{pro:ine} that
\begin{align}
&(9 + D_0)\int_0^t e^s\iO u^2 \varphi dxds\notag\\
&\leq \dfrac{1}{4}\int_0^t e^s\iO e^{-v}\dfrac{|\nabla u|^2}{u}\varphi dxds + (9 + D_0)K^2_{\mathrm{Sob}}\left(\dfrac{A^2}{2} + 1\right)\|u_0\|_{L^1(\Omega)}^2\int_0^t e^s ds.\label{eq:3.4.20}
\end{align}
Hence we obtain that for all $t \in (0,\infty)$,
\begin{align*}
&E(t)e^t + \dfrac{1}{4}\int_0^t e^s\iO e^{-v}\dfrac{|\nabla u|^2}{u}\varphi dxds + \dfrac{1}{4}\int_0^t e^s \iO |v_t|^2 \varphi dxds + \dfrac{1}{4}\int_0^t e^s \iO |\Delta v|^2 \varphi dxds\notag\\
&\leq C(1 + e^t)
\end{align*}
and dividing the above inequality by $e^t$, we have
\begin{align*}
&E(t) + \dfrac{1}{4}\int_0^t e^{s-t}\iO e^{-v}\dfrac{|\nabla u|^2}{u}\varphi dxds + \dfrac{1}{4}\int_0^t e^{s-t} \iO |v_t|^2 \varphi dxds + \dfrac{1}{4}\int_0^t e^{s-t} \iO |\Delta v|^2 \varphi dxds\notag\\
&\leq C.
\end{align*}
This is the desired inequality.
\end{proof}

We shall derive some stability on the anulus.

\begin{lemma}\label{lem:bdd1}
Let $0 < r \ll 1$ and $(u,v)$ be a radially symmetric solution of \eqref{p}. Then there exists a positive constant $C(r)$ such that for all $t \in (0,\infty)$,
\begin{align}
\int_0^t e^{s-t}\int_{B_{2r} \setminus B_r} u^2 dx ds \leq C(r).\label{eq:3.4.200}
\end{align}
\end{lemma}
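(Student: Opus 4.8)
The plan is to exploit that, away from the origin, the radial pointwise bound of Proposition \ref{pro:322} keeps $v$ — hence $e^{\pm v}$ — uniformly bounded in space and time, so that the smallness hypothesis of Lemma \ref{lem:3.5} can be verified on small balls centred on the annulus, and then to transfer the resulting control on $v_t$ and $\Delta v$ back to $u$ through the second equation of \eqref{p}.

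First I would fix the slightly enlarged annulus $\mathcal A := \{x \in \R^2 : r/2 < |x| < 5r/2\}$, choosing $r$ so small that $\overline{\mathcal A} \subset \Omega$; by Proposition \ref{pro:322} there is $C(r) > 0$ with $\sup_{t>0}\|v(\cdot,t)\|_{L^\infty(\mathcal A)} \le C(r)$, hence $\sup_{\mathcal A \times (0,\infty)} e^{v} \le e^{C(r)}$. Next I would cover the compact annulus $\overline{B_{2r} \setminus B_r}$ by finitely many balls $B_\rho(x_1), \dots, B_\rho(x_N)$ with $r/2 < |x_\ell| < 5r/2$ and a radius $\rho = \rho(r)$ to be fixed below, arranged so that the doubled balls $B_{2\rho}(x_\ell)$ still lie in $\mathcal A$ and so that every point of $\mathcal A$ lies in at most a fixed number of the $B_{2\rho}(x_\ell)$ (bounded overlap). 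Since $(u,v)$ is radially symmetric, $\int_{B_{2\rho}(x_\ell)} u(\cdot,t)\,dx$ depends only on $|x_\ell|$; comparing, at each fixed radius, the $\gtrsim r/\rho$ rotated copies of the ball with the total mass $\int_{\mathcal A} u\,dx \le \|u_0\|_{L^1(\Omega)}$ (mass conservation \eqref{mass:1}) and using the bounded overlap yields a bound of the form $\sup_{t>0}\int_{B_{2\rho}(x_\ell)} u(\cdot,t)\,dx \le C\,\rho\,\|u_0\|_{L^1(\Omega)} / r$ for every $\ell$, with $C$ depending only on the dimension. Combined with the uniform bound on $e^{v}$ on $\mathcal A$, this gives $\sup_{t>0}\int_{B_{2\rho}(x_\ell)} u e^{v}\,dx \le C(r)\,\rho$, so that choosing $\rho = \rho(r)$ small enough makes this quantity smaller than the threshold $1/\bigl(8(9+D_0)K_{\mathrm{Sob}}^2\bigr)$ appearing in the proof of Lemma \ref{lem:3.5}, with $D_0$ and $K_{\mathrm{Sob}}$ independent of $\rho$.

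Now I would apply Lemma \ref{lem:3.5} to each ball $B_{2\rho}(x_\ell)$ with the cut-off $\varphi_\ell := \varphi_{(x_\ell, \rho, n)}$ from Lemma \ref{lem:30} ($n$ large). This produces a constant $C$, independent of $t$ and $\ell$, such that $\int_0^t e^{s-t}\iO |v_t|^2 \varphi_\ell\,dx\,ds \le C$ and $\int_0^t e^{s-t}\iO |\Delta v|^2 \varphi_\ell\,dx\,ds \le C$ for all $t>0$. Summing over the finitely many $\ell$ and using $\sum_\ell \varphi_\ell \ge 1$ on $\overline{B_{2r}\setminus B_r}$, I obtain $\int_0^t e^{s-t}\int_{B_{2r}\setminus B_r}\bigl(|v_t|^2 + |\Delta v|^2\bigr)\,dx\,ds \le C(r)$ for all $t>0$. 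Together with $\int_0^t e^{s-t}\int_{B_{2r}\setminus B_r} v^2\,dx\,ds \le \bigl(\sup_{t>0}\|v(t)\|_{L^2(\Omega)}^2\bigr)\int_0^t e^{s-t}\,ds \le C$, which follows from \eqref{semi:1}, and the identity $u = v_t - \Delta v + v$ coming from the second equation of \eqref{p} (so that $u^2 \le 3(|v_t|^2 + |\Delta v|^2 + v^2)$ pointwise on the annulus), multiplying by $e^{s-t}$ and integrating over $(0,t) \times (B_{2r}\setminus B_r)$ yields \eqref{eq:3.4.200}.

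The main obstacle is the verification, uniform in $t$, of the smallness hypothesis of Lemma \ref{lem:3.5} on the covering balls: there is no a priori pointwise bound on $u$ on the annulus, and it is precisely the radial symmetry (constancy of $u$ on spheres) together with mass conservation and a bounded-overlap covering argument that forces the local masses $\int_{B_{2\rho}(x_\ell)} u\,dx$ to be uniformly small when $\rho$ is small; the role of Proposition \ref{pro:322} is then only to convert this into smallness of $\int_{B_{2\rho}(x_\ell)} u e^{v}\,dx$. Once this is in place, the remainder is a finite bookkeeping of applications of Lemma \ref{lem:3.5} together with the elementary algebra above.
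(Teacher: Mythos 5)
Your proof is correct, and it verifies the smallness hypothesis of Lemma \ref{lem:3.5} in essentially the same way the paper does: by placing small balls at a fixed radius inside a thickened annulus, using radial symmetry to equate their local masses, and then invoking mass conservation \eqref{mass:1} together with the uniform bound on $v$ away from the origin supplied by Proposition \ref{pro:322}. (The paper phrases this via $N$ pairwise disjoint balls $B_{r/N}(x_i)$ of fixed radius $r/N$ on a circle, rather than a bounded--overlap cover, but the counting is the same.)

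Where you genuinely diverge from the paper is in the final extraction of the $u^2$ bound. Once Lemma \ref{lem:3.5} is available on each covering ball, the paper re-invokes Proposition \ref{pro:ine} with $z = u$: the smallness of $\int_{B_{2\rho}(x_\ell)} u e^v\,dx$ together with the time-integral bound on $\int_\Omega e^{-v}\frac{|\nabla u|^2}{u}\varphi_\ell\,dx$ from \eqref{eq:3.5555} yields $\int_0^t e^{s-t}\int_{B_\rho(x_\ell)} u^2\,dx\,ds \le C$ directly. You instead discard the Fisher-information term and use the complementary outputs of \eqref{eq:3.5555}, namely the time-integral bounds on $\int_\Omega|v_t|^2\varphi_\ell\,dx$ and $\int_\Omega|\Delta v|^2\varphi_\ell\,dx$, and close the argument through the second equation $u = v_t - \Delta v + v$ together with $\sup_{t>0}\|v(t)\|_{L^2(\Omega)}<\infty$ from \eqref{semi:1}. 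Both routes are valid and of comparable length; your route has the modest advantage of never touching the motility weight $e^{-v}$ in the final step, trading it for the elementary inequality $u^2 \le 3(|v_t|^2 + |\Delta v|^2 + v^2)$, which makes more visible why radial symmetry plus the parabolic structure of the $v$-equation is enough to control $u$ on annuli away from the singular point.
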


\begin{proof}
Let us give $r \in (0,\frac{R}{4})$ and a sufficiently large $N \in \N$. Then, for any $\eta \in [r,2r]$, we can take $\{x_i\}_{i = 1}^N \subset \overline{\Omega}$ such that
\begin{align}
|x_i| = \eta,\quad B_{\frac{r}{N}}(x_i) \cap B_{\frac{r}{N}}(x_j) = \emptyset,\ i \not= j.\label{eq:3.4.21}
\end{align}
Since the solution of \eqref{p} is a radially symmetric function, we have for $i, j \in [1,N]$,
\begin{align*}
\int_{B_{\frac{r}{N}}(x_i)} u e^v dx = \int_{B_{\frac{r}{N}}(x_j)} u e^v dx.
\end{align*}
Hence we infer from the above equality and \eqref{eq:3.4.21} that
\begin{align*}
N\left(\sup_{t > 0} \int_{B_{\frac{r}{N}}(x_i)} u e^v dx\right) &= \sum_{i = 1}^N \sup_{t > 0} \int_{B_{\frac{r}{N}}(x_i)} u e^v dx\\
&\leq \sup_{t > 0} \int_{B_R(0) \setminus B_{\frac{r}{2}}(0)} ue^v dx.
\end{align*}
A radial symmetry and Proposition \ref{pro:322} imply
\begin{align*}
N\left(\sup_{t > 0} \int_{B_{\frac{r}{N}}(x_i)} u e^v dx\right) &\leq 2\pi \sup_{t >0} \int_{\frac{r}{2}}^R \rho u e^v d\rho\\
&\leq 2\pi \sup_{t >0} \int_{\frac{r}{2}}^R \rho u e^{|||v|||_\kappa\rho^{-\kappa}} d\rho\\
&\leq e^{|||v|||_\kappa (\frac{r}{2})^{-\kappa}} \|u_0\|_{L^1(\Omega)}
\end{align*}
and thus
\begin{align*}
\sup_{t >0} \int_{B_{\frac{r}{N}}(x_i)} u e^v dx \leq \dfrac{e^{|||v|||_\kappa (\frac{r}{2})^{-\kappa}} \|u_0\|_{L^1(\Omega)}}{N}.
\end{align*}
Choosing sufficiently large $N$ such that
\begin{align*}
\dfrac{e^{|||v|||_\kappa (\frac{r}{2})^{-\kappa}} \|u_0\|_{L^1(\Omega)}}{N} \leq \varepsilon,
\end{align*}
where $\varepsilon$ is the constant in Lemma \ref{lem:3.5}, 
we can apply Lemma \ref{lem:3.5} for $x_0 \in \overline{\Omega}$ with $|x_0| = |x_i| = \eta$. As a result,  we obtain  \eqref{eq:3.5555}. Thanks to the above inequality, Proposition \ref{pro:ine} implies that there exists a positive constant $C$ independent of time such that for all $t \in (0,\infty)$,
\begin{align*}
\int_0^t e^{s-t} \int_{B_{\frac{r}{2N}}(x_0)} u^2 dxds \leq C.
\end{align*}
The compactness of $B_{2r} \setminus B_r$ allows us to take a finite set $\{x_i\}_{i = 1}^m$ such that
\begin{align*}
B_{2r} \setminus B_r \subset \bigcup_{i = 1}^m B_{\frac{r}{2N}}(x_i).
\end{align*}
For this reason, we conclude that for all $t \in (0,\infty)$,
\begin{align*}
\int_0^t e^{s-t} \int_{B_{2r} \setminus B_r} u^2 dx ds \leq C.
\end{align*}
\end{proof}

From now on, we always assume the grow-up point is the origin thanks to Proposition \ref{pro:rad0}. Additionally, we will write $B_r$ instead of $B_r(0)$ for $0< r < 1$ to shorten.

The next proposition gives rise to Theorem \ref{th:1}. The principal significance of the proposition is that the second component $v$, notably $\int_\Omega e^{v} dx$ may determine the grow-up of solutions. This essentially differs from that of the Keller--Segel system (see \cite{NSS2000, SS2001}) and some chemotaxis systems (see \cite{YS2025}). The key idea is to obtain $H^1$-regularity of the auxiliary function $w = (I-\Delta)^{-1}u$.

\begin{proposition}\label{prop:3.11}
Let $a > 1$, $0 < r \ll 1$, and $n$ be sufficiently large, and more $\varphi = \varphi_{(0,r,n)}$ be as in Lemma \ref{lem:30}. Assume that $(u,v)$ be a radially symmetric solution of \eqref{p} in $\Omega \times (0,\infty)$. Then, $(u,v)$ satisfies
\begin{align}
\limsup_{t \to \infty}\int_{\Omega} e^{av} \varphi dx = \infty\label{ite:0}
\end{align}
if and only if the origin is the grow-up point.
\end{proposition}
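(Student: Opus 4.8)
The plan is to prove the two implications separately: the implication ``$0\notin\mathcal G\Rightarrow\sup_{t>0}\int_\Omega e^{av}\varphi\,dx<\infty$'' (equivalently, \eqref{ite:0} $\Rightarrow$ $0\in\mathcal G$) is soft, whereas ``$\sup_{t>0}\int_\Omega e^{av}\varphi\,dx<\infty\Rightarrow 0\notin\mathcal G$'' carries all the weight. For the soft half, suppose the origin is not a grow-up point; by Proposition \ref{pro:rad0} the set $\mathcal G$ is then empty, so $(u,v)$ is not a grow-up solution, i.e. $\limsup_{t\to\infty}\|u(t)\|_{L^\infty(\Omega)}<\infty$. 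Then the source in the second equation of \eqref{p} is bounded uniformly in time, and the comparison principle (or parabolic smoothing) yields $\sup_{t>0}\|v(t)\|_{L^\infty(\Omega)}<\infty$, whence $\int_\Omega e^{av}\varphi\,dx\le\|e^{av}\|_{L^\infty}\,|\mathrm{supp}\,\varphi|$ stays bounded. (Alternatively, since $\varphi$ is supported in $B_{2r}$ and, by Proposition \ref{pro:322}, $v$ is controlled on the annulus $B_{2r}\setminus B_r$, only the behaviour near the origin is at issue, and there boundedness of $u$ gives boundedness of $v$ through the second equation.)

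For the substantive half, assume $\sup_{t>0}\int_\Omega e^{av}\varphi\,dx=:M<\infty$; I want to show $u$ is bounded near the origin uniformly in time. The point is to convert the rather weak uniform bound $e^{v}\in L^a(B_{2r})$ (recall $a>1$ is arbitrary) into a uniform-in-time $H^1_{\mathrm{loc}}$ bound for the auxiliary function $w=(I-\Delta)^{-1}u$ near the origin, equivalently — by elliptic regularity applied to $-\Delta w+w=u$ in two dimensions — into a bound $\sup_{t>0}\|u(t)\|_{L^2(B_\rho)}<\infty$ for some small $\rho>0$; the bootstrap from there is standard. Concretely, I would aim to verify, for $\delta$ small, the smallness hypothesis $\sup_{t>0}\int_{B_{2\delta}}ue^v\,dx\le\varepsilon$ of Lemma \ref{lem:3.5}. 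Using $u=w-\Delta w$ and integrating by parts against $e^v$ times a cut-off $\psi=\varphi_{(0,\delta,n)}$ rewrites $\int_{B_{2\delta}}ue^v\psi\,dx$ as $\int we^v\psi\,dx+\int e^v\nabla w\cdot\nabla\psi\,dx+\int e^v\psi\,\nabla w\cdot\nabla v\,dx$. By Lemmas \ref{lem:h1}--\ref{lem:h2} one has $\sup_{t>0}\|w(t)\|_{L^q(\Omega)}<\infty$ for every $q<\infty$ and $\sup_{t>0}\|\nabla w(t)\|_{L^p(\Omega)}<\infty$ for every $p<2$; hence, by H\"older together with $e^v\in L^a(B_{2\delta})$ and (on the annulus, where $\nabla\psi$ lives) the pointwise bound of Proposition \ref{pro:322} and the time-averaged $L^2$ control of Lemma \ref{lem:bdd1}, the first two terms are not merely bounded uniformly in time but, after restricting to $B_\delta$, can be made arbitrarily small — crucially because $w(t)\in L^q(\Omega)$ for \emph{all} $q<\infty$, so $\|w(t)\|_{L^{a'}(B_\delta)}\le\|w(t)\|_{L^{2a'}(\Omega)}\,|B_\delta|^{1/(2a')}\to0$ uniformly in $t$.

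The cross term $\int e^v\psi\,\nabla w\cdot\nabla v\,dx$ is the genuine difficulty: with only $e^v\in L^a_{\mathrm{loc}}$, $\nabla w\in L^p_{\mathrm{loc}}$ ($p<2$) and $\nabla v\in L^p_{\mathrm{loc}}$ ($p<2$) a three-factor H\"older estimate cannot close, and this is exactly what forces us to upgrade $\nabla w$. I would split it by Young's inequality into $\tfrac12\int e^v|\nabla w|^2\psi+\tfrac12\int e^v|\nabla v|^2\psi$. For the second piece, differentiating $\int_\Omega e^{av}\psi\,dx$ in time and using the second equation of \eqref{p} gives, after integration by parts and discarding the favourable term $-a\int ve^{av}\psi\,dx\le0$ (since $v>0$) and noting that $\Delta\psi$ is supported on the annulus where $v$ — hence $e^{av}$ — is bounded (Proposition \ref{pro:322}), an inequality of the form $\frac{d}{dt}\int_\Omega e^{av}\psi\,dx+a^2\int_\Omega e^{av}|\nabla v|^2\psi\,dx\le a\int_\Omega ue^{av}\psi\,dx+C(r)$; combined with the bound $M$ this feeds back control on the space--time quantity $\int_0^te^{s-t}\int_\Omega e^{av}|\nabla v|^2\psi\,dxds$ in the averaged sense compatible with Lemmas \ref{lem:3.5}--\ref{lem:bdd1}. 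For the first piece, $\int e^v|\nabla w|^2\psi\le\|e^v\|_{L^a(B_\delta)}\|\nabla w\|_{L^{2a'}(B_\delta)}^2$ with $2a'=2a/(a-1)>2$, so once $u\in L^2_{\mathrm{loc}}$ the elliptic regularity $w\in W^{2,2}_{\mathrm{loc}}\hookrightarrow W^{1,q}_{\mathrm{loc}}$ in two dimensions makes this quantity small on $B_\delta$. Having thus verified the hypothesis of Lemma \ref{lem:3.5} on $B_{2\delta}$, Lemma \ref{lem:3.5} and Proposition \ref{pro:ine} with $z=u$ deliver the announced uniform-in-time bound on $\int_{B_\delta}u^2\,dx$, i.e. the $H^1$-regularity of $w$.

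From $\sup_{t>0}\|u(t)\|_{L^2(B_\delta)}<\infty$ one bootstraps in the usual way: the second equation yields improved regularity and boundedness of $v$ near the origin — directly by parabolic theory, or by noting $e^v\in L^q_{\mathrm{loc}}$ for all $q$ via Moser--Trudinger applied to $w\in H^1_{\mathrm{loc}}$ together with the pointwise comparison $v\le(1-e^{-v_*})^{-1}(w+K)$ of Lemma \ref{lem:vupper} — and with $v$ bounded near the origin the first equation $u_t=\Delta(e^{-v}u)$ is uniformly parabolic with bounded coefficients there, so \cite[Theorem 10.1]{LSU1968}, exactly as in Proposition \ref{pro:rad0}, gives $\sup_{t>0}\|u(t)\|_{L^\infty(B_{\delta/2})}<\infty$; thus $0\notin\mathcal G$. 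I expect the main obstacle to be the apparent circularity in the previous paragraph — the smallness needed to run Lemma \ref{lem:3.5}/Proposition \ref{pro:ine} already requires $u\in L^2_{\mathrm{loc}}$ through the cross term — which must be broken, e.g. by a continuity argument in the localization radius $\delta$ or by a finite iteration gaining integrability on $\nabla w$ step by step, all while keeping every estimate uniform in time. This is precisely the new ingredient needed here in place of the Biler--Hebisch--Nadzieja inequality \eqref{BHNine} used in \cite{NSS2000,SS2001,YS2025}, whose analogue is unavailable because the motility factor $e^{-v}$ cancels the good dissipation term, as explained at the start of this section.
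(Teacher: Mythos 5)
Your soft direction (assumption fails $\Rightarrow$ origin not a grow-up point) is essentially sound and agrees with the paper's contradiction argument. The hard direction, however, does not close, and you candidly flag the failure yourself. Your plan is to verify the smallness hypothesis $\sup_t\int_{B_{2\delta}}ue^v\,dx\le\varepsilon$ of Lemma \ref{lem:3.5} by shrinking $\delta$, after rewriting $\int ue^v\psi\,dx$ via $u=w-\Delta w$. The cross term $\int\psi e^v\nabla w\cdot\nabla v\,dx$ then demands $\nabla w\in L^q_{\mathrm{loc}}$ for some $q>2$, which by elliptic regularity is equivalent to the target $u\in L^2_{\mathrm{loc}}$ — a genuine circularity, and ``a continuity argument in $\delta$'' or ``finite iteration'' are placeholders, not an argument. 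Worse, two further steps in the same paragraph do not hold up: $(i)$ the proposed differential inequality for $\int e^{av}\psi\,dx$ carries $a\int ue^{av}\psi\,dx$ on its right-hand side, which is \emph{a priori} uncontrollable under the hypothesis and so yields no space–time bound on $\int e^{av}|\nabla v|^2\psi\,dx$; $(ii)$ the annulus term $\int e^v\nabla w\cdot\nabla\psi\,dx$ is not made small by shrinking $\delta$, because the pointwise bound of Proposition \ref{pro:322} only gives $v\lesssim\delta^{-\kappa}$ on $B_{2\delta}\setminus B_\delta$, so $e^v$ grows like $e^{C\delta^{-\kappa}}$ there and overwhelms the $L^1$ smallness of $\nabla w$.

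The paper's proof sidesteps Lemma \ref{lem:3.5} entirely in this direction. Instead of seeking smallness, it tests the first equation of \eqref{p} against $w\varphi$; that energy estimate puts the good dissipation term $\frac12\int e^{-v}u^2\varphi\,dx$ on the \emph{left}-hand side, so no smallness is required. The hypothesis $\sup_t\int e^{av}\varphi\,dx<\infty$ enters only through H\"older to absorb terms like $\int uw e^v\varphi^{1-\frac{4}{n}}\,dx$ and $\int w^2 e^v\varphi\,dx$, and the annulus residue $\int_{B_{2r}\setminus B_r}u^2\,dx$ is handled by the separately proved time-averaged bound of Lemma \ref{lem:bdd1}. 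From $\sup_t\|\nabla w\|_{L^2(B_r)}<\infty$ one then gets $e^{\alpha v}\in L^1(B_r)$ for every $\alpha$ via the Trudinger--Moser inequality and the comparison $v\le(1-e^{-v_*})^{-1}(w+K)$ of Lemma \ref{lem:vupper}; elliptic estimates for the Helmholtz equation and $w_t\le w$ then give $w\in L^\infty_{\mathrm{loc}}$, hence $v\in L^\infty_{\mathrm{loc}}$, hence $u\in L^\infty_{\mathrm{loc}}$ by Alikakos--Moser. That sequence is where the argument actually lives, and it is missing from your proposal.
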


\begin{proof}
We assume \eqref{ite:0}.
Since we deduce from Lemma \ref{lem:30} that
\begin{align*}
\iO e^{av}\varphi dx &\leq e^{a\|v(t)\|_{L^\infty(\Omega)}}\iO \varphi dx\\
&\leq  e^{a\|v(t)\|_{L^\infty(\Omega)}}|\Omega|,
\end{align*}
the assertion \eqref{ite:0} yields
\begin{align*}
\limsup_{t \to \infty}\|v(t)\|_{L^\infty(B_{r})} = \infty.
\end{align*}
Suppose that the conclusion is false, that is to say, the origin is not grow-up point. Then, the definition of grow-up solutions yields that
\begin{align*}
\limsup_{t \to \infty}\|u(t)\|_{L^\infty(B_r)} < \infty.
\end{align*}
Here \cite[(STEP 3) in Proposition 3.7]{YS2025} contributes to achieving that
\begin{align*}
\sup_{t >0}\|v(t)\|_{L^\infty(B_r)} < \infty,
\end{align*}
which is the contradiction. Thus it holds that the origin is the grow up point.

We will show the converse statement by contradiction. 
Hence, we have the assumption that
\begin{align}
\limsup_{t \to \infty}\iO e^{av}\varphi dx < \infty.\label{ite:1}
\end{align}
First, we will prove that
\begin{align}
\sup_{t >0}\iO |\nabla w|^2 \varphi dx< \infty,\quad \sup_{t > 0}\int_0^t e^{s-t}\int_{B_r}e^{-v}u^2 dx ds < \infty.\label{ite:2}
\end{align}
Multiplying the first equation \eqref{p} by $w\varphi$, we use the integration by parts to obtain
\begin{align*}
\iO u_t w\varphi dx &= \iO \Delta (ue^{-v})w\varphi dx\\
&= \iO ue^{-v}(\Delta w) \varphi dx + 2\iO ue^{-v}\nabla w \cdot \nabla \varphi dx + \iO ue^{-v}w \Delta \varphi dx
\end{align*}
and due to the definition of $w$,
\begin{align*}
\iO u_t w \varphi dx = \iO u e^{-v} w\varphi dx - \iO e^{-v} u^2 \varphi dx + 2\iO ue^{-v}\nabla w \cdot \nabla \varphi dx + \iO ue^{-v}w \Delta \varphi dx.
\end{align*}
Here, the left-hand side above is rewritten by the definition of $w$ and the integration by parts as follows:
\begin{align*}
\iO u_t w \varphi dx &= \iO (-\Delta w_t + w_t) w \varphi dx\\
&= \dfrac{1}{2}\dfrac{d}{dt}\iO |\nabla w|^2 \varphi dx + \dfrac{1}{2}\dfrac{d}{dt}\iO w^2 \varphi dx + \iO w \nabla w_t \cdot \nabla \varphi dx.
\end{align*}
Consequently, we arrive at
\begin{align}
&\dfrac{1}{2}\dfrac{d}{dt}\iO |\nabla w|^2 \varphi dx + \dfrac{1}{2}\dfrac{d}{dt}\iO w^2 \varphi dx + \iO e^{-v}u^2 \varphi dx\notag\\
&= \iO ue^{-v} w\varphi dx + 2\iO ue^{-v}\nabla w \cdot \nabla \varphi dx + \iO ue^{-v}w \Delta \varphi dx - \iO w\nabla w_t \cdot \nabla \varphi dx.\label{ite:3}
\end{align}
Applying Lemma \ref{lem:h2} with $z = w, f = u$, and \eqref{mass:1} yield that for any $p \in [1,\infty)$,
\begin{align*}
\|w(t)\|_{L^p(\Omega)} \leq C\|u_0\|_{L^1(\Omega)},
\end{align*}
we get from the H\"older inequality and the Young inequality that
\begin{align}
\iO ue^{-v}w \varphi dx &\leq \dfrac{1}{8}\iO e^{-v}u^2 \varphi dx + 2\iO e^{-v}w^2\varphi dx\notag\\
&\leq \dfrac{1}{8}\iO e^{-v} u^2 \varphi dx + 2\iO w^2 dx\notag\\
&\leq \dfrac{1}{8}\iO e^{-v} u^2 \varphi dx + C.\label{ite:4}
\end{align}
Similar to the above argument and from Lemma \ref{lem:30}, we obtain for sufficiently large $n$,
\begin{align}
\iO ue^{-v} w \Delta \varphi dx &\leq B\iO ue^{-v}w \varphi^{1-\frac{2}{n}} dx\notag\\
&\leq \dfrac{1}{8}\iO u^2 e^{-v}\varphi dx + 2B^2 \iO e^{-v}w^2\varphi^{1-\frac{4}{n}} dx\notag\\
&\leq \dfrac{1}{8}\iO e^{-v}u^2\varphi dx + 2B^2 \iO w^2 dx\notag\\
&\leq \dfrac{1}{8}\iO e^{-v}u^2\varphi dx + C.\label{ite:5}
\end{align}
Thanks to Lemma \ref{lem:30}, we derive from the H\"older inequality and the Young inequality that
\begin{align}
2\iO ue^{-v}\nabla w \cdot \nabla \varphi dx \leq \dfrac{1}{8}\iO u^2 e^{-v}\varphi dx + 8A^2 \iO e^{-v}|\nabla w |^2 \varphi^{1-\frac{2}{n}} dx.\label{ite:6}
\end{align}
By the definition of $w$ and the integration by parts we obtain
\begin{align*}
8A^2\iO e^{-v}|\nabla w|^2 \varphi^{1-\frac{2}{n}} dx &\leq 8A^2\iO (\nabla w \cdot \nabla w)\varphi^{1-\frac{2}{n}} dx\\
&= -8A^2\iO w(\Delta w)\varphi^{1-\frac{2}{n}} dx - 8A^2\iO w \nabla w \cdot \nabla \varphi^{1-\frac{2}{n}}dx\\
&\leq 8A^2 \iO uw \varphi^{1-\frac{2}{n}} dx - 4A^2 \iO \nabla w^2 \cdot \nabla \varphi^{1-\frac{2}{n}} dx\\
&=: I + II.
\end{align*}
With respect to the term $I$, we use the H\"older inequality and the Young inequality to get
\begin{align*}
I &= 8A^2 \iO uw\varphi^{1-\frac{2}{n}} dx\\
&\leq \dfrac{1}{8}\iO e^{-v}u^2 \varphi dx + 128A^4 \iO e^{v} w^2 \varphi^{1-\frac{4}{n}} dx\\
&\leq \dfrac{1}{8}\iO e^{-v}u^2 \varphi dx + 128A^4 \left(\iO e^{av}\varphi dx\right)^\frac{1}{a}\left(\iO w^{\frac{2a}{a-1}} \varphi^{1-\frac{4a}{n(a-1)}} dx\right)^\frac{a-1}{a}
\end{align*}
and owing to the assumption \eqref{ite:1} and \eqref{eq:h3}, and more sufficiently large $n$, we infer that
\begin{align*}
I \leq \dfrac{1}{8}\iO e^{-v}u^2\varphi dx + C.
\end{align*}
With respect to the term $II$, by integrating by parts, Lemma \ref{lem:30} implies that
\begin{align*}
II &= 4A^2 \iO w^2 \Delta \varphi^{1-\frac{2}{n}} dx\\
&\leq 4A^2 \|\Delta \varphi^{1-\frac{2}{n}}\|_{L^\infty(\Omega)}\iO w^2 dx\\
&\leq C.
\end{align*}
Hence combing these estimates gives that
\begin{align}
8A^2\iO e^{-v}|\nabla w|^2 \varphi dx \leq \dfrac{1}{8}\iO e^{-v}u^2 \varphi dx + C.\label{ite:7}
\end{align}
Now collecting \eqref{ite:3}-\eqref{ite:7}, we establish that
\begin{align}
\dfrac{1}{2}\dfrac{d}{dt}\iO |\nabla w|^2 \varphi dx + \dfrac{1}{2}\dfrac{d}{dt}\iO w^2 \varphi dx + \dfrac{1}{2}\iO e^{-v}u^2 \varphi dx \leq -\iO w \nabla w_t \cdot \nabla \varphi dx + C.\label{ite:8}
\end{align}
We evaluate the remainder term on the right-hand side of the above inequality. Integrating by parts implies
\begin{align}
-\iO w \nabla w_t \cdot \nabla \varphi dx &= \iO w w_t \Delta \varphi dx + \iO w_t \nabla w \cdot \nabla \varphi dx\notag\\
&=: III + IV.\label{ite:10}
\end{align}
Since we derive from the first equation of \eqref{p} and the definition of $w$ that
\begin{align}
w_t &= (I - \Delta)^{-1}u_t\notag\\
&= (I - \Delta)^{-1}\Delta(ue^{-v})\notag\\
&= (I-\Delta)^{-1}[ue^{-v}] - ue^{-v},\label{ite:10.5}
\end{align}
the term $III$ can be rewritten as
\begin{align}
III = \iO w (I-\Delta)^{-1}[ue^{-v}]\Delta \varphi dx - \iO w (ue^{-v}) \Delta \varphi dx.\label{ite:11}
\end{align}
With respect to the first term on the right-hand side of the above equality, we get from Lemma \ref{lem:30}, \eqref{eq:h3}, and the mass conservation law \eqref{mass:1} that
\begin{align}
\iO w (I-\Delta)^{-1}[ue^{-v}]\Delta \varphi dx &\leq B\iO w |(I-\Delta)^{-1}[ue^{-v}]|\varphi^{1-\frac{2}{n}} dx\notag\\
&\leq B\|(I-\Delta)^{-1}[ue^{-v}]\|_{L^\infty(B_{2r})}\iO w \varphi^{1-\frac{2}{n}} dx\notag\\
&\leq B \|(I-\Delta)^{-1}[ue^{-v}]\|_{L^\infty(B_{2r})}C\|u_0\|_{L^1(\Omega)}.\notag
\end{align}
Since the Sobolev inequality and the elliptic regularity theorem allow us to calculate 
\begin{align*}
\|(I-\Delta)^{-1}[ue^{-v}]\|_{L^\infty(B_{2r})} \leq C_\mathrm{Sob}C_\mathrm{Ell}\left(\int_{B_{2r}}e^{-2v}u^2 dx \right)^\frac{1}{2},
\end{align*}
using the Young inequality, we have
\begin{align}
\iO w (I-\Delta)^{-1}[ue^{-v}]\Delta \varphi dx &\leq \dfrac{1}{16}\int_{B_{2r}} e^{-v}u^2 dx + 4B^2C_\mathrm{Sob}^2C_\mathrm{Ell}^2C^2\|u_0\|_{L^1(\Omega)}^2.\label{ite:12}
\end{align}
According to Lemma \ref{lem:30}, the H\"older inequality, the Young inequality, and \eqref{eq:h3},
the second term on the right-hand side of the equality \eqref{ite:11} can be estimated as 
\begin{align}
-\iO w(ue^{-v})\Delta \varphi dx &\leq B\iO w (ue^{-v}) \varphi^{1-\frac{2}{n}} dx\notag\\
&\leq B\left(\iO e^{-v}u^2 \varphi dx\right)^\frac{1}{2}\left(\iO e^{-v}w^2 \varphi^{1-\frac{4}{n}} dx\right)^\frac{1}{2}\notag\\
&\leq \dfrac{1}{16}\iO e^{-v}u^2 \varphi dx + 4B^2 \iO w^2 \varphi^{1-\frac{4}{n}} dx\notag\\
&\leq \dfrac{1}{16}\iO e^{-v}u^2 \varphi dx + C.\label{ite:13}
\end{align}
Combing \eqref{ite:11}-\eqref{ite:13} implies that
\begin{align}
III \leq \dfrac{1}{16}\int_{B_{2r}}e^{-v}u^2 dx + \dfrac{1}{16}\iO e^{-v}u^2 \varphi dx + C.\label{ite:14}
\end{align}
With respect to the term $IV$, by using \eqref{ite:10.5} once again, we deduce from Lemma \ref{lem:30} that
\begin{align*}
IV &= \iO ((I-\Delta)^{-1}[ue^{-v}]) \nabla w \cdot \nabla \varphi dx - \iO ue^{-v}\nabla w \cdot \nabla \varphi dx\notag\\
&\leq A \|(I-\Delta)^{-1}[ue^{-v}]\|_{L^\infty(B_{2r})}\iO |\nabla w|\varphi^{1-\frac{1}{n}} dx
+ A\iO ue^{-v}|\nabla w|\varphi^{1-\frac{1}{n}} dx.\notag
\end{align*}
Here, since the application of Lemma \ref{lem:h1} with $z = w$, $f = u$, and $p = 1$ and furthermore the mass conservation law \eqref{mass:1} ensure that
\begin{align*}
\|w(t)\|_{W^{1,1}(\Omega)} \leq C\|u_0\|_{L^1(\Omega)},
\end{align*}
where $C$ is a positive constant, we can now proceed analogously to the estimate \eqref{ite:11}-\eqref{ite:13} as follows:
\begin{align}
IV \leq \dfrac{1}{16}\int_{B_{2r}} e^{-v}u^2 dx + A\iO ue^{-v}|\nabla w| \varphi^{1-\frac{1}{n}} dx + C.\label{ite:15}
\end{align}
Moreover, by using the H\"older inequality and the Young inequality, we get
\begin{align}
A\iO ue^{-v}|\nabla w| \varphi^{1-\frac{1}{n}} dx &\leq \dfrac{1}{16}\int_{B_{2r}}e^{-v}u^2 dx + 4A^2 \iO |\nabla w|^2 \varphi dx\label{ite:16}
\end{align}
due to the fact $\varphi^\alpha \leq \varphi$ for $\alpha \ge 1$. 
Gathering \eqref{ite:10} and \eqref{ite:14}-\eqref{ite:16} implies
\begin{align*}
-\iO w \nabla w_t \cdot \nabla \varphi dx &\leq \dfrac{3}{16}\int_{B_{2r}} e^{-v}u^2 dx + \dfrac{1}{16}\iO e^{-v}u^2 \varphi dx + 4A^2\iO |\nabla w|^2 \varphi dx + C\\
&\leq \dfrac{1}{4}\int_{B_{2r}} e^{-v}u^2 dx + 4A^2\iO |\nabla w|^2 \varphi dx + C
\end{align*}
and also combining the above estimate with \eqref{ite:8} bring about
\begin{align}
&\dfrac{1}{2}\dfrac{d}{dt}\iO |\nabla w|^2 \varphi dx + \dfrac{1}{2}\dfrac{d}{dt}\iO w^2 \varphi dx + \dfrac{1}{2}\iO e^{-v}u^2 \varphi dx\notag\\
&\leq \dfrac{1}{4}\int_{B_{2r}} e^{-v}u^2 dx + 4A^2\iO |\nabla w|^2 \varphi dx + C.\label{ite:17}
\end{align}
Multiplying $-\Delta w + w = u$ by $w\varphi$ and integrating by parts in order to obtain time-independent estimates of $\|w(t)\|_{H^1(\Omega)}$, 
we have
\begin{align*}
\iO |\nabla w|^2 \varphi dx + \iO w^2 \varphi dx &= \iO uw \varphi dx - \iO w \nabla w \cdot \nabla \varphi dx\\
&=\iO uw \varphi dx +\dfrac{1}{2} \iO  w^2  \Delta \varphi dx.
\end{align*}
By virtue of Lemma \ref{lem:30} and \eqref{eq:h3}, we apply the H\"older inequality and the Young inequality for the right-hand side in the above equality to obtain
\begin{align}
&(4A^2 + 1)\iO |\nabla w|^2 \varphi dx + (4A^2 + 1)\iO w^2 \varphi dx\notag\\
 &\leq (4A^2 + 1)\left(\iO e^{-v}u^2\varphi dx\right)^\frac{1}{2}\left(\iO w^2 e^{v}\varphi dx\right)^\frac{1}{2} + \dfrac{4A^2 + 1}{2}\|\Delta \varphi\|_{L^\infty(\Omega)} \iO w^2 dx\notag\\
&\leq \dfrac{1}{8}\iO e^{-v}u^2 \varphi dx +2(4A^2 + 1)^2  \iO w^2e^v \varphi dx + C\notag\\
&\leq \dfrac{1}{8}\iO e^{-v}u^2 \varphi dx + 2(4A^2 + 1)^2\left(\iO e^{av}\varphi dx\right)^\frac{1}{a}\left(\iO w^{\frac{2a}{a-1}}\varphi dx \right)^\frac{a-1}{a} + C\notag.
\end{align}
Noticing the assumption \eqref{ite:1}, we gain
\begin{align*}
(4A^2 + 1)\iO |\nabla w|^2 \varphi dx + (4A^2 + 1)\iO w^2 \varphi dx &\leq \dfrac{1}{8}\iO e^{-v}u^2 \varphi dx + C.
\end{align*}
Summarizing the above estimate into \eqref{ite:17}, we confirm from Lemma \ref{lem:30} that 
\begin{align*}
&\dfrac{1}{2}\dfrac{d}{dt}\iO |\nabla w|^2 \varphi dx + \dfrac{1}{2}\dfrac{d}{dt}\iO w^2 \varphi dx + \dfrac{1}{2}\iO e^{-v}u^2 \varphi dx\\
&\hspace{0.5cm}+ 
(4A^2 + 1)\iO |\nabla w|^2 \varphi dx + (4A^2 + 1)\iO w^2 \varphi dx
\notag\\
&\leq \dfrac{1}{4}\int_{B_{2r}} e^{-v}u^2 dx + 4A^2\iO |\nabla w|^2 \varphi dx + \dfrac{1}{8}\iO e^{-v}u^2 \varphi dx + C\\
&\leq \dfrac{3}{8}\int_{B_{2r}} e^{-v}u^2 dx + 4A^2\iO |\nabla w|^2 \varphi dx + C\\
&\leq \dfrac{3}{8}\int_{B_{r}} e^{-v}u^2 dx + \dfrac{3}{8}\int_{B_{2r} \setminus B_r}e^{-v}u^2 dx + 4A^2\iO |\nabla w|^2 \varphi dx + C\\
&\leq \dfrac{3}{8}\int_{B_{r}} e^{-v}u^2 dx + \dfrac{3}{8}\int_{B_{2r} \setminus B_r}u^2 dx + 4A^2\iO |\nabla w|^2 \varphi dx + C
\end{align*}
and refining the above inequality, we can rewrite as follows:
\begin{align*}
&\dfrac{1}{2}\dfrac{d}{dt}\iO |\nabla w|^2 \varphi dx + \dfrac{1}{2}\dfrac{d}{dt}\iO w^2 \varphi dx + \dfrac{1}{8}\int_{B_r} e^{-v}u^2 dx + 
\dfrac{1}{2}\iO |\nabla w|^2 \varphi dx + \dfrac{1}{2}\iO w^2 \varphi dx
\notag\\
&\leq \dfrac{3}{8}\int_{B_{2r} \setminus B_r}u^2 dx + C.
\end{align*}
By introducing 
\begin{align*}
Y(t) := \iO |\nabla w|^2 \varphi dx + \iO w^2 \varphi dx,
\end{align*}
and more multiplying $e^t$ and integrating with respect to time, we arrive at
\begin{align*}
Y(t) + \dfrac{1}{4}\int_0^t e^{s-t}\int_{B_r}e^{-v}u^2 dx ds \leq \dfrac{3}{4}\int_0^t e^{s-t}\int_{B_{2r}\setminus B_r} u^2 dx ds + C.
\end{align*}
Hence Lemma \ref{lem:bdd1} contributes to achieving that
\begin{align*}
Y(t) + \dfrac{1}{4}\int_0^t e^{s-t}\int_{B_r}e^{-v}u^2 dx ds \leq C,
\end{align*}
which is the desired our claim \eqref{ite:2}. Next we will show that for any $\alpha \in [1,\infty)$,
\begin{align}
\sup_{t > 0}\iO e^{\alpha v} \varphi dx < \infty.\label{ite:18}
\end{align} 
According to \eqref{bdd:v}: for all $(x,t) \in \Omega \times [0,\infty)$,
\begin{align*}
v(x,t) \leq \dfrac{1}{1-e^{-v_*}}\left(w(x,t) + K\right),
\end{align*}
this inequality allows us to proceed as follows:
\begin{align*}
\iO e^{\alpha v} \varphi &\leq \iO e^{\frac{\alpha}{1-e^{-v_*}}(w + K)} \varphi dx\\
&= e^{\frac{\alpha}{1-e^{-v_*}}K}\iO e^{\frac{\alpha}{1-e^{-v_*}}w} \varphi dx.
\end{align*}
We also make much use of \cite[Lemma 5.3]{NSS2000}, which is the modified Trudinger--Moser inequality (see \cite{CY1988, MJ1970}),  together with the above inequality and \eqref{ite:2} to infer that for all $t \in (0,\infty)$,
\begin{align*}
\iO e^{\alpha v}\varphi dx \leq e^{\frac{\alpha}{1-e^{-v_*}}K} C_\mathrm{TM}\exp \left(\dfrac{\alpha^2}{16\pi(1-e^{-v_*})^2} \iO |\nabla w|^2 \varphi dx\right) < \infty
\end{align*}
with $C_\mathrm{TM} > 0$ depending only on $\Omega$. This clearly implies our claim. What is left is to show time-independent $L^\infty$-boundedness of solution $(u,v)$. We shall have established the lemma if we prove the following: 
\begin{align*}
\sup_{t > 0}\|w(t)\|_{L^\infty(B_r)} < \infty.
\end{align*}
Indeed, the above result leads to 
\begin{align*}
\sup_{ t >0}\|v(t)\|_{L^\infty(B_r)} < \infty
\end{align*}
thanks to \eqref{bdd:v}, thereby we can show the conclusion using the Alikakos--Moser iteration scheme (see \cite{A1979}). Here, multiplying $-\Delta w + w= u$ by $\varphi$ redefining $\varphi$ as $\varphi = \varphi_{(0,\frac{r}{2},n)}$, we have
\begin{align*}
u\varphi &= w\varphi - (\Delta w)\varphi\notag\\
&= w\varphi - \Delta (w\varphi) + 2\nabla w \cdot \nabla \varphi + w \Delta \varphi.
\end{align*}
As a consequence of the elliptic regularity theorem and the Sobolev inequality, it follows that
\begin{align*}
\|w\varphi\|_{L^\infty(\Omega)} &\leq C_\mathrm{Sob}\|w\varphi\|_{W^{2,\frac{3}{2}}(\Omega)}\notag\\
&\leq C_\mathrm{Sob}C_\mathrm{Ell}\|u\varphi - 2\nabla w\cdot \nabla \varphi - w\Delta \varphi\|_{L^\frac{3}{2}(\Omega)}\notag\\
&\leq C_\mathrm{Sob}C_\mathrm{Ell}\|u\varphi\|_{L^\frac{3}{2}(\Omega)} + 2C_\mathrm{Sob}C_\mathrm{Ell}\|\nabla w \cdot \nabla \varphi \|_{L^\frac{3}{2}(\Omega)} + C_\mathrm{Sob}C_\mathrm{Ell}\|w\Delta\varphi\|_{L^\frac{3}{2}(\Omega)}.
\end{align*}
Since Lemma \ref{eq:h1}, the mass conversation law \eqref{mass:1}, and Lemma \ref{lem:30} together result in
\begin{align*}
\|w\varphi\|_{L^\infty(\Omega)} &\leq  C_\mathrm{Sob}C_\mathrm{Ell}\|u\varphi\|_{L^\frac{3}{2}(\Omega)}\\
&\hspace{0.5cm}+ 2C_\mathrm{Sob}C_\mathrm{Ell}\|\nabla \varphi\|_{L^\infty(\Omega)} C\|u_0\|_{L^1(\Omega)} + C_\mathrm{Sob}C_\mathrm{Ell}\|\Delta \varphi\|_{L^\infty(\Omega)}C\|u_0\|_{L^1(\Omega)},
\end{align*}
we focus on the first term on the right-hand side of the above inequality. By using the H\"older inequality and the Young inequality, we deduce from \eqref{ite:18} that
\begin{align*}
\left(\iO (u\varphi)^\frac{3}{2} dx\right)^\frac{2}{3} &\leq \left(\int_{B_{r}} e^{-v}u^2 dx\right)^\frac{1}{2}\left(\iO e^{3v}\varphi^6 dx\right)^\frac{1}{6}\notag\\
&\leq C\left(\int_{B_{r}} e^{-v}u^2 dx\right)^\frac{1}{2}\\
&\leq \int_{B_{r}} e^{-v}u^2 dx + \dfrac{C}{4}.
\end{align*}
Therefore due to \eqref{ite:2}, we get for all $t \in (0,\infty)$,
\begin{align}
\int_0^t e^{s-t}\|w\varphi\|_{L^\infty(\Omega)} ds &\leq C_\mathrm{Sob}C_\mathrm{Ell}\int_0^t e^{s-t} \int_{B_{r}} e^{-v}u^2 dx + C\int_0^t e^{s-t} ds\notag\\
&\leq C.\label{ite:20}
\end{align}
Since \cite[Lemma 5]{FJ2021} makes it obvious that $w_t \leq w$ holds for all $(x,t) \in \Omega \times (0,\infty)$, we can calculate
\begin{align*}
\int_0^t \left(\dfrac{d}{ds}e^s w\right) ds \leq 2\int_0^t e^s w(x,s) ds.
\end{align*}
This implies that for $(x,t) \in \Omega \times (0,\infty)$,
\begin{align*}
w(x,t) &= e^{-t}w(x,0) + e^{-t} \int_0^t \left(\dfrac{d}{ds} e^s w(x,s)\right) ds\\
&\leq e^{-t}w(x,0) + 2\int_0^t e^{s-t}w(x,s) ds\notag\\
&= e^{-t}(I -\Delta)^{-1}u_0(x)+ 2\int_0^t e^{s-t}w(x,s) ds.
\end{align*}
Consequently we can conclude from \eqref{ite:20} and Lemma \ref{lem:30} that
\begin{align*}
\sup_{t > 0}\|w(t)\|_{L^\infty(B_\frac{r}{2})} < \infty,
\end{align*}
which implies the end of the proof. In other words, if the origin is the grow-up point, \eqref{ite:0} holds.

\end{proof}
\begin{proof}[Proof of Theorem \ref{th:1}]
Of course, since it follows that
\begin{align*}
\iO e^{av}\varphi dx \leq \iO e^{av} dx,
\end{align*}
Proposition \ref{prop:3.11} explicitly formulates the theorem.
\end{proof}

\subsection{Proof of Corollary \ref{th:2} and Corollary \ref{th:3}}\label{subsec:3}
In this part, we will look more closely at the behavior of the radially symmetric solution to \eqref{p} around the origin, with the aim of demonstrating the concentration phenomena occurring there. 
As a first step, we will present that the solution satisfies the H\"older condition in $x$ and also $t$. 

\begin{lemma}\label{lem:H}
Let $(u,v)$ be a radially symmetric grow-up solution of \eqref{p}. Then for any $0 < r \ll 1$ and $\alpha \in (0,1)$, there exists a positive constant $C = C(r, \alpha)$ such that
\begin{align*}
\|(u,v)\|_{C^{2+\alpha, 1 + \frac{\alpha}{2}}((\overline{\Omega} \setminus B_r) \times (0,\infty))} \leq C(r,\alpha).
\end{align*}
\end{lemma}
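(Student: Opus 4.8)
The key observation is that $\overline{\Omega}\setminus B_r$ is exactly the region in which $v$ stays bounded, so that the obstruction analysed in Section \ref{sec3} — the motility $e^{-v}$ degenerating where $v$ is large — is absent here, and the proof reduces to a standard parabolic bootstrap carried out with constants uniform in time. \emph{Step 1 (an $L^\infty$-bound for $v$ off the origin).} By Proposition \ref{pro:322} we have $v(x,t)\le\big(\sup_{s>0}|||v(s)|||_\kappa\big)|x|^{-\kappa}\le C(r)$ whenever $|x|\ge r$ and $t>0$, while Lemma \ref{lem:below} gives $v\ge v_*>0$; hence $v$, and therefore $e^{-v}$, is bounded above and below by positive constants on $(\overline{\Omega}\setminus B_r)\times(0,\infty)$, so the first equation of \eqref{p}, restricted to $\{|x|>r\}$, is uniformly parabolic with coefficients bounded uniformly in time.

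\emph{Step 2 (an $L^\infty$-bound for $u$ off the origin).} This is the crux. First I would promote the weighted time-integral bound of Lemma \ref{lem:bdd1}, $\int_0^t e^{s-t}\int_{B_{2r}\setminus B_r}u^2\,dx\,ds\le C(r)$, to a genuine local parabolic energy bound: testing the first equation of \eqref{p} by $u$ times a cut-off $\chi$ supported in $B_{2r}\setminus B_r$ and using that $e^{-v}$ is bounded below on that annulus (Step 1) gives a differential inequality for $\int u^2\chi$ whose bad term has the form $C\int u^2|\na v|^2\chi$; this closes using the uniform-in-time and uniformly-locally-integrable-in-time bounds on $u$ and $\na v$ already available from \eqref{semi:2}, Lemma \ref{lem:3.5} and Proposition \ref{pro:ine} (all applicable on annuli by radial symmetry, exactly as in the proof of Lemma \ref{lem:bdd1}), and yields $\sup_{t>0}\int_{B_{7r/4}\setminus B_{5r/4}}u^2(\cdot,t)\,dx<\infty$ together with a locally-integrable-in-time bound on $\int|\na u|^2$. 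With $u$ now in the local parabolic energy class on that annulus, and with the lower-order coefficient $-e^{-v}\na v$ of the first equation lying in $L^\infty((0,\infty);L^{p_0}(\Omega))$ for some $p_0\in(1,2)$ by \eqref{semi:2}, the local boundedness theorem \cite[Theorem 10.1]{LSU1968} — applied exactly as in the proof of Proposition \ref{pro:rad0} — gives $\sup_{t>0}\|u(t)\|_{L^\infty(\overline{\Omega}\setminus B_{2r})}<\infty$ (for $t\le 1$ this is just boundedness of the continuous solution on the compact set $\overline{\Omega}\times[0,1]$, cf. Proposition \ref{loc:1}).

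\emph{Step 3 (bootstrap to $C^{2+\alpha,1+\alpha/2}$).} Given that $u$ and $v$ are bounded, uniformly in time, on $\overline{\Omega}\setminus B_{2r}$, I would iterate interior and Neumann-boundary parabolic estimates on a shrinking family of annuli. Plugging $u\in L^\infty$ into $v_t-\Delta v+v=u$ and using parabolic $L^p$-estimates gives $v\in W^{2,1}_p$ for all $p<\infty$, hence $v\in C^{1+\beta,(1+\beta)/2}$ with $\na v$ bounded; the first equation in divergence form, $u_t=\na\cdot(e^{-v}\na u-ue^{-v}\na v)$, then has bounded measurable principal part and right-hand side $\na\cdot F$ with $F=-ue^{-v}\na v\in L^\infty$, so De Giorgi--Nash--Moser gives $u\in C^{\gamma,\gamma/2}$; Schauder estimates for the $v$-equation upgrade $v$ to $C^{2+\gamma,1+\gamma/2}$, whence $e^{-v}$, $\na v$ and $\Delta v$ are all of class $C^{\gamma,\gamma/2}$; and finally Schauder estimates for the non-divergence form $u_t=e^{-v}\Delta u-2e^{-v}\na v\cdot\na u+e^{-v}(|\na v|^2-\Delta v)u$ upgrade $u$ to $C^{2+\alpha,1+\alpha/2}$. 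All constants are uniform in $t$ since the space--time cylinders used have a fixed size, small times being absorbed as above; and since we have worked on $\overline{\Omega}\setminus B_{cr}$ for a fixed multiple $c$ of the arbitrary small number $r$, relabelling $r$ yields the stated estimate.

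\emph{Main obstacle.} Everything from Step 3 on is routine linear parabolic theory; the one genuinely substantive point is the cut-off energy estimate in Step 2, and it is precisely here that working away from the origin is essential — the difficulty highlighted in Section \ref{sec3}, that $e^{-v}$ annihilates the dissipation $\int e^{-v}|\na u|^2$, simply does not arise because $e^{-v}$ is bounded below on $\overline{\Omega}\setminus B_r$.
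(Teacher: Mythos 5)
Your overall architecture (a time-uniform $L^\infty$ bound on $u$ and $v$ off the origin, followed by a standard parabolic bootstrap) is the same as the paper's, and your Steps~1 and~3 match the paper in substance. The gap is entirely in Step~2, which is both a detour and, as written, not clearly closed.

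The paper obtains $\sup_{t>0}\|u(t)\|_{L^\infty(\Omega\setminus B_r)}\le C(r)$ with no PDE estimate at all: it is an immediate compactness consequence of Proposition~\ref{pro:rad0} ($\mathcal{G}=\{0\}$) and the definition of a grow-up point. If the $L^\infty$-norm of $u$ on the annulus were unbounded along some $t_k$, a subsequence $x_k\to x_0\in\overline{\Omega}\setminus B_r$ would force $t_k\to\infty$ (classical solutions are bounded on compact time intervals), producing a grow-up point $x_0\ne 0$ and contradicting $\mathcal{G}=\{0\}$. After that the paper feeds this $L^\infty$ bound on $u$ into the heat equation for $v$ to get $\sup_t\|v(t)\|_{C^1(\overline{\Omega}\setminus B_{r'})}<\infty$ (via \cite[STEP~3 in Proposition~3.7]{YS2025}) and then iterates Schauder/LSU exactly as in your Step~3. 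Note the order: $u$ bounded first (soft argument), then $\nabla v$ bounded, then regularity of $u$.

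Your replacement of that soft step by a cut-off energy estimate is where the proposal breaks. Testing $u_t=\Delta(e^{-v}u)$ with $u\chi$ produces the bad term $\int e^{-v}u^2|\nabla v|^2\chi\,dx$. To close this you would need $\nabla v$ in something like $L^\infty_t L^4_x$ on the annulus, but the only global gradient bound available at that stage is \eqref{semi:2}, namely $\nabla v\in L^\infty_t L^{p}_x$ for $p<2$, which is far too weak in dimension two. Lemma~\ref{lem:3.5} and Proposition~\ref{pro:ine} give weighted time-integral control of $|\Delta v|^2$ and $\int e^{-v}\frac{|\nabla u|^2}{u}$, not a pointwise-in-time gradient bound, so the claim that the energy inequality ``closes using the uniform-in-time \dots bounds on $u$ and $\nabla v$ already available'' is not substantiated. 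You also invoke LSU Theorem~10.1 with drift coefficient merely in $L^\infty_t L^{p_0}_x$ for $p_0\in(1,2)$; in two space dimensions that does not meet the integrability threshold of that theorem. In short: the $L^\infty$ bound on $u$ away from the origin does not need an energy argument at all, and the energy argument you propose has a real hole in it; replace Step~2 by the compactness deduction from $\mathcal{G}=\{0\}$ and the rest of your bootstrap goes through.
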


\begin{proof}
We can derive from the definition of grow-up point and Proposition \ref{pro:rad0} that for any $0 <r \ll 1$, there exists a positive constant $C(r)$ depending on $r$ such that
\begin{align*}
\sup_{t > 0}\|u(t)\|_{L^\infty(\Omega \setminus B_r)} \leq C(r).
\end{align*}
In addition, we apply \cite[(STEP 3) in Proposition 3.7]{YS2025} for the second equation of \eqref{p} to get
\begin{align}
\sup_{t > 0}\|v(t)\|_{C^1(\overline{\Omega} \setminus B_{r^\prime})} \leq C(r)\label{Hellder_1}
\end{align}
for any $0 < r < r^\prime \ll 1$. By noticing that Lemma \ref{lem:below} ensures the lower bound of $v$ so that 
\begin{align*}
v(x,t) \ge v_*\quad \mathrm{for\ all}\ (x,t) \in (\overline{\Omega}\setminus B_{r^\prime}) \times (0,\infty),
\end{align*}
\cite[Theorem 10.1]{LSU1968} and \eqref{Hellder_1} enable us to get
\begin{align*}
\|u\|_{C^{\alpha, \frac{\alpha}{2}}((\overline{\Omega} \setminus B_{r^{\prime\prime}}) \times (0,\infty))} \leq C(r)
\end{align*}
for any $0 < r < r^\prime < r^{\prime\prime} \ll 1$. Moreover we again apply \cite[Theorem 10.1]{LSU1968} for the second equation of \eqref{p} to obtain
\begin{align*}
\|v\|_{C^{\alpha, \frac{\alpha}{2}}((\overline{\Omega} \setminus B_{r^{\prime\prime\prime}}) \times (0,\infty))} \leq C(r)
\end{align*}
for any $0 < r < r^\prime < r^{\prime\prime} < r^{\prime\prime\prime}\ll 1$. Thanks to the lower bound of $v$,
we can complete the proof by repeating this argument once again.
\end{proof}

We next introduce a Lyapunov functional focusing on a localized area, which is so called `` localized Lyapunov functional.'' The combination of a boundedness of the localized Lyapunov functional and a modified Trudinger--Moser inequality (see \cite[Lemma 5.3]{NSS2000}) concludes the proof of Corollary \ref{th:2} following the similar approach as \cite[Subsection 3.5]{YS2025}, thereby we regard Corollary \ref{th:2} as proved through the establishment of the following proposition, without providing the detailed proof.

\begin{proposition}
Let $0 < r \ll 1$ and $n$ be a sufficiently large, and more let $\varphi = \varphi_{(0,r,n)}$ be as in Lemma \ref{lem:30}. Then there exists a positive constant $C$ such that for all $t \in (0,\infty)$,
\begin{align*}
\dfrac{d}{dt}\mathcal{F}_\varphi (t) + \mathcal{F}_\varphi (t) + \mathcal{D}_\varphi (t) \leq C,
\end{align*}
where
\begin{align*}
\mathcal{F}_\varphi (t) &:= \iO (u\log u - uv)\varphi dx + \dfrac{1}{2}\iO |\nabla v|^2 \varphi dx + \iO v^2 \varphi dx,\\
\mathcal{D}_\varphi (t) &:= \iO ue^{-v}|\nabla (\log u - v)|^2 \varphi dx + \iO |v_t|^2 \varphi dx.
\end{align*}
Moreover, it holds that
\begin{align*}
\sup_{t > 0}\mathcal{F}_\varphi (t) < \infty.
\end{align*}
\end{proposition}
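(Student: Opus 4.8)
The plan is to derive a differential inequality for the localized Lyapunov functional $\mathcal{F}_\varphi$ by mimicking the computation that produces the global energy identity \eqref{lya}, carefully tracking every boundary term that arises when the cut-off $\varphi$ fails to commute with the integrations by parts. Concretely, I would differentiate $\iO(u\log u - uv)\varphi\,dx$ using the first equation, differentiate $\frac{1}{2}\iO|\nabla v|^2\varphi\,dx$ and $\iO v^2\varphi\,dx$ using the second equation (this second piece was already done inside the proof of Lemma \ref{lem:3.5}), and collect all terms. The leading-order terms reproduce $-\mathcal{D}_\varphi(t)$; the genuinely new contributions are lower-order terms carrying a factor of $\nabla\varphi$ or $\Delta\varphi$, plus the zeroth-order terms $\iO v^2\varphi\,dx$ and a cross term like $\iO uv\varphi\,dx$ coming from the extra $\iO v^2\varphi\,dx$ that I add to close the $\frac{d}{dt}$ estimate into the form $\frac{d}{dt}\mathcal{F}_\varphi + \mathcal{F}_\varphi + \mathcal{D}_\varphi \le C$.

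The key inputs that make all the error terms bounded are exactly the a priori estimates established earlier. From Proposition \ref{pro:322} (or Lemma \ref{lem:below} combined with Lemma \ref{lem:vupper}) we know $v$ is pointwise bounded on the support of $\nabla\varphi$, which sits in the annulus $B_{2r}\setminus B_r$ away from the origin; by Proposition \ref{pro:rad0} the origin is the only grow-up point, so Lemma \ref{lem:H} gives a uniform-in-time $C^{2+\alpha,1+\alpha/2}$ bound for $(u,v)$ on $\overline{\Omega}\setminus B_{r/2}$. Hence every term involving $\nabla\varphi$ or $\Delta\varphi$ — for instance $\iO e^{-v}u(\log u+1)\nabla v\cdot\nabla\varphi\,dx$, $\iO e^{-v}(\log u+1)u\Delta\varphi\,dx$, $\iO v_t\nabla v\cdot\nabla\varphi\,dx$, $\iO uv_t\varphi'$-type remainders — is bounded uniformly in $t$ by a constant depending on $r$ and $n$. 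The zeroth-order $v$-terms are controlled by \eqref{semi:1}, and $|u\log u|\le u\log u + 2/e$ together with mass conservation \eqref{mass:1} handles the sign issue in the entropy term. The cross term $\iO uv\varphi\,dx$ produced when I add $\iO v^2\varphi\,dx$ is split by Young's inequality into $\iO u^2\varphi\,dx$-type and $\iO v^2\varphi\,dx$-type pieces, the latter absorbed via \eqref{semi:1} and the former treated as in Lemma \ref{lem:3.5}; if an $\iO u^2\varphi\,dx$ contribution survives, I absorb it into $\mathcal{D}_\varphi$ via Proposition \ref{pro:ine} precisely as in \eqref{eq:3.4.20}, using that for a grow-up solution with the origin as grow-up point one may still invoke Lemma \ref{lem:3.5}/Lemma \ref{lem:bdd1} on the annulus. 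For the second assertion, $\sup_{t>0}\mathcal{F}_\varphi(t)<\infty$ follows by Gronwall from the differential inequality: multiplying by $e^t$ and integrating yields $\mathcal{F}_\varphi(t)\le \mathcal{F}_\varphi(0)e^{-t} + C$.

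The main obstacle is bookkeeping discipline rather than any single hard estimate: one must be sure that after all the integrations by parts the "good" dissipation $\iO ue^{-v}|\nabla(\log u - v)|^2\varphi\,dx$ genuinely emerges with the correct sign and is not partially cancelled by an error term of the same order. The delicate point is that $\nabla(\log u - v)$ is the natural quantity (as in \eqref{lya}), so when differentiating $\iO(u\log u - uv)\varphi\,dx$ one should organize the computation around $u_t = \nabla\cdot(ue^{-v}\nabla(\log u - v))$ and integrate by parts so that the derivative lands on $\varphi$ or produces $\nabla(\log u - v)\cdot\nabla u$ — and here the troublesome factor $e^{-v}$, which killed the diffusion term in the $L^2$-estimate discussed in Section \ref{sec3}, is harmless because $u|\nabla(\log u - v)|^2 = u^{-1}e^{v}\cdot(ue^{-v}|\nabla(\log u-v)|^2)\cdot e^{v}$... no: rather, $e^{-v}$ is bounded above by $e^{-v_*}$ from Lemma \ref{lem:below} on all of $\Omega$, and bounded below only where $v$ is bounded above, i.e.\ on the annulus, which is exactly where the error terms live. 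This asymmetry — upper bound everywhere, lower bound only off the origin — is what makes the localized argument work, and verifying it clause by clause is the real content of the proof.
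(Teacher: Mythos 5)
The overall plan — differentiate $\mathcal{F}_\varphi$ along the solution, organize the calculation so that $\mathcal{D}_\varphi$ emerges with the right sign, and bound the remaining terms because they carry a $\nabla\varphi$ or $\Delta\varphi$ factor and hence live on the annulus where Lemma \ref{lem:H} gives uniform control — is the correct skeleton and is the same as the paper's. But there are two genuine gaps in how you propose to close the argument.

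First, you plan to land directly on $\frac{d}{dt}\mathcal{F}_\varphi + \mathcal{F}_\varphi + \mathcal{D}_\varphi \le C$ and then conclude $\sup_t\mathcal{F}_\varphi < \infty$ by Gronwall. Putting $\mathcal{F}_\varphi$ on the left with a damping sign forces you to control the terms it contains from the right-hand side, and this is where your proposal breaks down. You say that any $\iO u^2\varphi\,dx$ contribution can be absorbed into $\mathcal{D}_\varphi$ ``via Proposition \ref{pro:ine} precisely as in \eqref{eq:3.4.20}.'' That absorption requires $\int_{B_{2r}(x_0)}ue^v\,dx \le \varepsilon$ for a small $\varepsilon$ determined by universal constants; but here $\varphi = \varphi_{(0,r,n)}$ is centred at the origin, which by Proposition \ref{pro:rad0} is exactly the grow-up point, so $\int_{B_{2r}(0)}ue^v\,dx$ has no reason to be small — and indeed Theorem \ref{th:1} tells you it is large along some sequence of times. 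Thus Lemma \ref{lem:3.5} and Lemma \ref{lem:bdd1} cannot be invoked for cut-offs centred at $0$; they are designed for balls in the annulus away from the origin, which is where Proposition \ref{pro:322} forces the requisite smallness. The same obstruction blocks your treatment of the cross term $\iO uv\varphi\,dx$: after Young the $\iO u^2\varphi\,dx$ half is out of reach.

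The paper sidesteps this entirely by a cleaner algebraic organization that produces no $\iO u^2\varphi\,dx$ term at all. One multiplies the first equation by $(\log u - v)\varphi$ — the natural localized test function for the energy — and then, in the piece $\iO u v_t\varphi\,dx$ arising from $\frac{d}{dt}\iO(u\log u - uv)\varphi\,dx$, one substitutes $u = v_t - \Delta v + v$ from the second equation and integrates by parts. This at once produces $\iO|v_t|^2\varphi\,dx$, the time derivatives of $\frac{1}{2}\iO|\nabla v|^2\varphi\,dx$ and $\frac{1}{2}\iO v^2\varphi\,dx$, and remainder terms each carrying a factor $\nabla\varphi$. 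The net result is the exact identity $\frac{d}{dt}\mathcal{F}_\varphi(t) + \mathcal{D}_\varphi(t) = \mathcal{R}_\varphi(t)$ with
$\mathcal{R}_\varphi(t) = \iO\bigl(e^{-v}(\nabla u - u\nabla v) - e^{-v}(\log u - v)(\nabla u - u\nabla v) - v_t\nabla v\bigr)\cdot\nabla\varphi\,dx$,
whose integrand is supported in $B_{2r}\setminus B_r$; by Lemma \ref{lem:H} it is uniformly bounded in $t$. There is simply no dissipation-absorption step.

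Second, the boundedness $\sup_{t>0}\mathcal{F}_\varphi(t) < \infty$ in the paper does not come from Gronwall. It follows from the elementary observation that $\mathcal{F}_\varphi(t) \le \mathcal{F}(t) + C$ — the difference $\mathcal{F}(t) - \mathcal{F}_\varphi(t)$ consists of integrals over $\Omega\setminus B_r$ (controlled by Lemma \ref{lem:H}) together with zeroth-order $v$-terms controlled by \eqref{semi:1} — combined with the global monotonicity $\mathcal{F}(t) \le \mathcal{F}(0)$ from \eqref{lya}. One then adds this bound to $\frac{d}{dt}\mathcal{F}_\varphi + \mathcal{D}_\varphi \le C$ to obtain the stated inequality. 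This order of operations is essential: the uniform bound on $\mathcal{F}_\varphi$ is obtained first, not derived from the differential inequality. Your Gronwall route, by contrast, is circular, since it presupposes the very control of $\iO(u\log u - uv)\varphi\,dx$ near the origin that the proposition is designed to establish.
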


\begin{proof}
Multiplying the first equation of \eqref{p} by $(\log u - v)\varphi$ and integrating over $\Omega$, we have
\begin{align*}
\iO u_t (\log u - v)\varphi dx = \iO \nabla \cdot (e^{-v} (\nabla u - u\nabla v))(\log u - v)\varphi dx
\end{align*}
and also we integrate by parts to obtain
\begin{align*}
&\iO u_t (\log u - v)\varphi dx\\
&= -\iO ue^{-v} |\nabla (\log u - v)|^2 \varphi dx - \iO e^{-v}(\log u - v)(\nabla u - u\nabla v) \cdot \nabla \varphi dx.
\end{align*}
Since the left-hand side on the above equality can be rewritten as
\begin{align*}
\iO u_t (\log u - v)\varphi dx = \dfrac{d}{dt}\iO (u\log u - uv)\varphi dx - \iO u_t\varphi dx + \iO uv_t\varphi dx,
\end{align*}
from the second equation of \eqref{p} and by integrating by parts, we calculate
\begin{align*}
\iO uv_t\varphi dx &= \iO (v_t - \Delta v + v) v_t \varphi dx\\
&= \iO |v_t|^2 \varphi dx + \dfrac{1}{2}\dfrac{d}{dt}\iO |\nabla v|^2 \varphi dx + \iO v_t \nabla v \cdot \nabla \varphi dx + \dfrac{1}{2}\dfrac{d}{dt}\iO v^2 \varphi dx.
\end{align*}
By using the first equation of \eqref{p} and integrating by parts, the term $\iO u_t \varphi dx$ can be revised as
\begin{align*}
\iO u_t\varphi dx &= \iO \nabla \cdot (e^{-v}(\nabla u - u\nabla v))\varphi dx\\
&= \iO e^{-v}(\nabla u - u\nabla v) \cdot \nabla \varphi dx.
\end{align*}
Gathering this information implies that
\begin{align*}
\dfrac{d}{dt}\mathcal{F}_\varphi (t) + \mathcal{D}_\varphi (t) = \mathcal{R}_\varphi (t),
\end{align*}
where 
\begin{align*}
\mathcal{R}_\varphi (t) := \iO (e^{-v}(\nabla u - u\nabla v) - e^{-v}(\log u - v)(\nabla u - u\nabla v) - v_t\nabla v)\cdot \nabla \varphi dx.
\end{align*}
Thanks to Lemma \ref{lem:30} and Lemma \ref{lem:H}, the remainder term $\mathcal{R}_\varphi$ is uniformly bounded, that is to say, it holds that for $t \in (0,\infty)$
\begin{align}
\dfrac{d}{dt}\mathcal{F}_\varphi (t) + \mathcal{D}_\varphi (t) \leq C,\label{lyalocal}
\end{align}
with $C := \sup_{t > 0}\mathcal{R}_\varphi (t) < \infty$. What is left is to show a time-independent estimate for $\mathcal{F}_\varphi (t)$. Since $\F$ is the Lyapunov functional satisfying the energy identity \eqref{lya}, it holds that
\begin{align*}
\F (t) \leq \F (0).
\end{align*}
Therefore Lemma \ref{lem:30} and Lemma \ref{lem:H} lead to
\begin{align*}
\mathcal{F}_\varphi (t) &\leq \mathcal{F}(t) + C\\
&\leq \mathcal{F}(0) + C,
\end{align*}
where $C$ is a positive constant, which is shown by \cite[Lemma 3.10]{YS2025}. This completes the proof by adding $\F_\varphi (t)$ to \eqref{lyalocal}.
\end{proof}

\begin{proof}[Proof of Corollary \ref{th:3}]
We notice that the stationary problem for our system \eqref{p} coincides with that of the Keller--Segel system. Therefore we can give full play to the results in \cite[Subsection 3.6]{YS2025} thanks to Lemma \ref{lem:H} and the assumption \eqref{eq:i3} encouraging a convergence to a stationary solution of \eqref{p} in the domain excluding the origin. Hence we skip the detailed proof for simplicity.
\end{proof}

\textbf{Acknowledgments}

The author wishes to thank my supervisor Kentaro Fujie in Tohoku University for several useful comments concerning this paper.

\begin{bibdiv}
\begin{biblist}

\bib{AY2019}{article}{
   author={Ahn, Jaewook},
   author={Yoon, Changwook},
   title={Global well-posedness and stability of constant equilibria in
   parabolic-elliptic chemotaxis systems without gradient sensing},
   journal={Nonlinearity},
   volume={32},
   date={2019},
   number={4},
   pages={1327--1351},
}

\bib{A1979}{article}{
   author={Alikakos, N. D.},
   title={$L\sp{p}$\ bounds of solutions of reaction-diffusion equations},
   journal={Comm. Partial Differential Equations},
   volume={4},
   date={1979},
   number={8},
   pages={827--868},
}

\bib{AH1990}{article}{
   author={Amann, Herbert},
   title={Dynamic theory of quasilinear parabolic equations. II.
   Reaction-diffusion systems},
   journal={Differential Integral Equations},
   volume={3},
   date={1990},
   number={1},
   pages={13--75},
}

\bib{AH1993}{article}{
   author={Amann, Herbert},
   title={Nonhomogeneous linear and quasilinear elliptic and parabolic
   boundary value problems},
   conference={
      title={Function spaces, differential operators and nonlinear analysis},
      address={Friedrichroda},
      date={1992},
   },
   book={
      series={Teubner-Texte Math.},
      volume={133},
      publisher={Teubner, Stuttgart},
   },
   isbn={3-8154-2045-8},
   date={1993},
   pages={9--126},
}

\bib{BBTW2015}{article}{
   author={Bellomo, N.},
   author={Bellouquid, A.},
   author={Tao, Y.},
   author={Winkler, M.},
   title={Toward a mathematical theory of Keller-Segel models of pattern
   formation in biological tissues},
   journal={Math. Models Methods Appl. Sci.},
   volume={25},
   date={2015},
   number={9},
   pages={1663--1763},
}

\bib{B1998}{article}{
   author={Biler, Piotr},
   title={Local and global solvability of some parabolic systems modelling
   chemotaxis},
   journal={Adv. Math. Sci. Appl.},
   volume={8},
   date={1998},
   number={2},
   pages={715--743},
}

\bib{BHN1994}{article}{
   author={Biler, Piotr},
   author={Hebisch, Waldemar},
   author={Nadzieja, Tadeusz},
   title={The Debye system: existence and large time behavior of solutions},
   journal={Nonlinear Anal.},
   volume={23},
   date={1994},
   number={9},
   pages={1189--1209},
}

\bib{BS1973}{article}{
   author={Br\'ezis, Ha\"im},
   author={Strauss, Walter A.},
   title={Semi-linear second-order elliptic equations in $L\sp{1}$},
   journal={J. Math. Soc. Japan},
   volume={25},
   date={1973},
   pages={565--590},
}

\bib{CY1988}{article}{
   author={Chang, Sun-Yung A.},
   author={Yang, Paul C.},
   title={Conformal deformation of metrics on $S^2$},
   journal={J. Differential Geom.},
   volume={27},
   date={1988},
   number={2},
   pages={259--296},
}

\bib{FTL2012}{article}{
  title = {Stripe Formation in Bacterial Systems with Density-Suppressed Motility},
  author = {Fu, Xiongfei} 
  author = {Tang, Lei-Han}
  author = {Liu, Chenli}
  author = {Huang, Jian-Dong}
  author = {Hwa, Terence}
  author = {Lenz, Peter},
  journal = {Phys. Rev. Lett.},
  volume = {108},
  issue = {19},
  pages = {198102},
  numpages = {5},
  year = {2012},
}

\bib{FJ2020}{article}{
   author={Fujie, Kentarou},
   author={Jiang, Jie},
   title={Global existence for a kinetic model of pattern formation with
   density-suppressed motilities},
   journal={J. Differential Equations},
   volume={269},
   date={2020},
   number={6},
   pages={5338--5378},
}

\bib{FJ2021}{article}{
   author={Fujie, Kentarou},
   author={Jiang, Jie},
   title={Comparison methods for a Keller-Segel-type model of pattern
   formations with density-suppressed motilities},
   journal={Calc. Var. Partial Differential Equations},
   volume={60},
   date={2021},
   number={3},
   pages={Paper No. 92, 37},
}

\bib{FJ2021_2}{article}{
   author={Fujie, Kentaro},
   author={Jiang, Jie},
   title={Boundedness of classical solutions to a degenerate Keller-Segel
   type model with signal-dependent motilities},
   journal={Acta Appl. Math.},
   volume={176},
   date={2021},
   pages={Paper No. 3, 36},
}

\bib{FJ2022}{article}{
   author={Fujie, Kentaro},
   author={Jiang, Jie},
   title={A note on construction of nonnegative initial data inducing
   unbounded solutions to some two-dimensional Keller-Segel systems},
   journal={Math. Eng.},
   volume={4},
   date={2022},
   number={6},
   pages={Paper No. 045, 12},
}

\bib{FS2016}{article}{
   author={Fujie, Kentarou},
   author={Senba, Takasi},
   title={Global existence and boundedness of radial solutions to a two
   dimensional fully parabolic chemotaxis system with general sensitivity},
   journal={Nonlinearity},
   volume={29},
   date={2016},
   number={8},
   pages={2417--2450},
}

\bib{FS2018}{article}{
   author={Fujie, Kentarou},
   author={Senba, Takasi},
   title={A sufficient condition of sensitivity functions for boundedness of
   solutions to a parabolic-parabolic chemotaxis system},
   journal={Nonlinearity},
   volume={31},
   date={2018},
   number={4},
   pages={1639--1672},
}

\bib{FS2022_1}{article}{
   author={Fujie, Kentaro},
   author={Senba, Takasi},
   title={Global existence and infinite time blow-up of classical solutions
   to chemotaxis systems of local sensing in higher dimensions},
   journal={Nonlinear Anal.},
   volume={222},
   date={2022},
   pages={Paper No. 112987, 7},
}

\bib{FS2022_2}{article}{
   author={Fujie, Kentaro},
   author={Senba, Takasi},
   title={Global boundedness of solutions to a parabolic-parabolic
   chemotaxis system with local sensing in higher dimensions},
   journal={Nonlinearity},
   volume={35},
   date={2022},
   number={7},
   pages={3777--3811},
}

\bib{GK1989}{article}{
   author={Giga, Yoshikazu},
   author={Kohn, Robert V.},
   title={Nondegeneracy of blowup for semilinear heat equations},
   journal={Comm. Pure Appl. Math.},
   volume={42},
   date={1989},
   number={6},
   pages={845--884},
}

\bib{HV1996}{article}{
   author={Herrero, Miguel A.},
   author={Vel\'azquez, Juan J. L.},
   title={Chemotactic collapse for the Keller-Segel model},
   journal={J. Math. Biol.},
   volume={35},
   date={1996},
   number={2},
   pages={177--194},
}

\bib{HV1997}{article}{
   author={Herrero, Miguel A.},
   author={Vel\'azquez, Juan J. L.},
   title={A blow-up mechanism for a chemotaxis model},
   journal={Ann. Scuola Norm. Sup. Pisa Cl. Sci. (4)},
   volume={24},
   date={1997},
   number={4},
   pages={633--683 (1998)},
}

\bib{JL2021}{article}{
   author={Jiang, Jie},
   author={Lauren\c cot, Philippe},
   title={Global existence and uniform boundedness in a chemotaxis model
   with signal-dependent motility},
   journal={J. Differential Equations},
   volume={299},
   date={2021},
   pages={513--541},
}

\bib{HZ2020}{article}{
   author={Jin, Hai-Yang},
   author={Wang, Zhi-An},
   title={Critical mass on the Keller-Segel system with signal-dependent
   motility},
   journal={Proc. Amer. Math. Soc.},
   volume={148},
   date={2020},
   number={11},
   pages={4855--4873},
}

\bib{KS1970}{article}{
   author={Keller, Evelyn F.},
   author={Segel, Lee A.},
   title={Initiation of slime mold aggregation viewed as an instability},
   journal={J. Theoret. Biol.},
   volume={26},
   date={1970},
   number={3},
   pages={399--415},
}

\bib{LSU1968}{book}{
   author={Lady\v zenskaja, O. A.},
   author={Solonnikov, V. A.},
   author={Ural\cprime ceva, N. N.},
   title={Linear and quasilinear equations of parabolic type},
   language={Russian},
   series={Translations of Mathematical Monographs},
   volume={Vol. 23},
   note={Translated from the Russian by S. Smith},
   publisher={American Mathematical Society, Providence, RI},
   date={1968},
   pages={xi+648},
}

\bib{L2019}{article}{
   author={Lauren\c cot, Philippe},
   title={Global bounded and unbounded solutions to a chemotaxis system with
   indirect signal production},
   journal={Discrete Contin. Dyn. Syst. Ser. B},
   volume={24},
   date={2019},
   number={12},
   pages={6419--6444},
}

\bib{LFL2011}{article}{
	author = {Liu, Chenli},
	author = {et al},
	title = {Sequential establishment of stripe patterns in an expanding cell population},
	year = {2011},
	journal = {Science},
	volume = {334},
	number = {6053},
	pages = {238--241},
}

\bib{MF1990}{article}{
   author={Merle, Frank},
   title={Construction of solutions with exactly $k$ blow-up points for the
   Schr\"odinger equation with critical nonlinearity},
   journal={Comm. Math. Phys.},
   volume={129},
   date={1990},
   number={2},
   pages={223--240},
}

\bib{M2013}{article}{
   author={Mizoguchi, Noriko},
   title={Global existence for the Cauchy problem of the parabolic-parabolic
   Keller-Segel system on the plane},
   journal={Calc. Var. Partial Differential Equations},
   volume={48},
   date={2013},
   number={3-4},
   pages={491--505},
}

\bib{M2020_1}{article}{
   author={Mizoguchi, Noriko},
   title={Criterion on initial energy for finite-time blowup in
   parabolic-parabolic Keller-Segel system},
   journal={SIAM J. Math. Anal.},
   volume={52},
   date={2020},
   number={6},
   pages={5840--5864},
}

\bib{M2020_2}{article}{
   author={Mizoguchi, Noriko},
   title={Finite-time blowup in Cauchy problem of parabolic-parabolic
   chemotaxis system},
   language={English, with English and French summaries},
   journal={J. Math. Pures Appl. (9)},
   volume={136},
   date={2020},
   pages={203--238},
}

\bib{MJ1970}{article}{
   author={Moser, J.},
   title={A sharp form of an inequality by N. Trudinger},
   journal={Indiana Univ. Math. J.},
   volume={20},
   date={1970/71},
   pages={1077--1092},
}

\bib{N1995}{article}{
   author={Nagai, Toshitaka},
   title={Blow-up of radially symmetric solutions to a chemotaxis system},
   journal={Adv. Math. Sci. Appl.},
   volume={5},
   date={1995},
   number={2},
   pages={581--601},
}

\bib{NSS2000}{article}{
   author={Nagai, Toshitaka},
   author={Senba, Takasi},
   author={Suzuki, Takashi},
   title={Chemotactic collapse in a parabolic system of mathematical
   biology},
   journal={Hiroshima Math. J.},
   volume={30},
   date={2000},
   number={3},
   pages={463--497}
}

\bib{NSY1997}{article}{
   author={Nagai, Toshitaka},
   author={Senba, Takasi},
   author={Yoshida, Kiyoshi},
   title={Application of the Trudinger-Moser inequality to a parabolic
   system of chemotaxis},
   journal={Funkcial. Ekvac.},
   volume={40},
   date={1997},
   number={3},
   pages={411--433},
}

\bib{SS2001}{article}{
   author={Senba, Takasi},
   author={Suzuki, Takashi},
   title={Chemotactic collapse in a parabolic-elliptic system of
   mathematical biology},
   journal={Adv. Differential Equations},
   volume={6},
   date={2001},
   number={1},
   pages={21--50},
}

\bib{SS2001_2}{article}{
   author={Senba, Takasi},
   author={Suzuki, Takashi},
   title={Parabolic system of chemotaxis: blowup in a finite and the
   infinite time},
   note={IMS Workshop on Reaction-Diffusion Systems (Shatin, 1999)},
   journal={Methods Appl. Anal.},
   volume={8},
   date={2001},
   number={2},
   pages={349--367},
}

\bib{SS2002}{article}{
   author={Senba, Takasi},
   author={Suzuki, Takashi},
   title={Time global solutions to a parabolic-elliptic system modelling
   chemotaxis},
   journal={Asymptot. Anal.},
   volume={32},
   date={2002},
   number={1},
   pages={63--89},
}

\bib{YS2025}{article}{
   author={Soga, Yuri},
   title={Concentration phenomena to a chemotaxis system with indirect signal production},
   journal={arXiv:2502.13411},
   date={2025},
}

\bib{SY2010}{article}{
   author={Sugiyama, Yoshie},
   title={$\epsilon$-regularity theorem and its application to the blow-up
   solutions of Keller-Segel systems in higher dimensions},
   journal={J. Math. Anal. Appl.},
   volume={364},
   date={2010},
   number={1},
   pages={51--70},
}

\bib{ST2005}{book}{
   author={Suzuki, Takashi},
   title={Free energy and self-interacting particles},
   series={Progress in Nonlinear Differential Equations and their
   Applications},
   volume={62},
   publisher={Birkh\"auser Boston, Inc., Boston, MA},
   date={2005},
   pages={xiv+366},
}

\bib{TW2017}{article}{
   author={Tao, Youshan},
   author={Winkler, Michael},
   title={Effects of signal-dependent motilities in a Keller-Segel-type
   reaction-diffusion system},
   journal={Math. Models Methods Appl. Sci.},
   volume={27},
   date={2017},
   number={9},
   pages={1645--1683},
}

\bib{WM2013}{article}{
   author={Winkler, Michael},
   title={Finite-time blow-up in the higher-dimensional parabolic-parabolic
   Keller-Segel system},
   language={English, with English and French summaries},
   journal={J. Math. Pures Appl. (9)},
   volume={100},
   date={2013},
   number={5},
   pages={748--767},
}

\end{biblist}
\end{bibdiv}

\bigskip
\address{ 
Mathematical Institute \\
Tohoku University \\
Sendai 980-8578 \\
Japan
}
{soga.yuri.q6@dc.tohoku.ac.jp}

\end{document}